\newcolumntype{P}[1]{>{\centering\arraybackslash}p{#1}}
\newtheorem{proposition}{Proposition}
\newtheorem{remark}{Remark}
\numberwithin{equation}{section}
\begin{document}
\title{A structure-preserving integrator for incompressible finite elastodynamics based on a grad-div stabilized mixed formulation with particular emphasis on stretch-based material models}
\author{Jiashen Guan$^{\textup{a}}$, Hongyan Yuan$^{\textup{a}}$, and Ju Liu$^{\textup{ a,b,*}}$ \\
$^a$ \textit{\small Department of Mechanics and Aerospace Engineering,}\\
\textit{\small Southern University of Science and Technology,}\\
\textit{\small 1088 Xueyuan Avenue, Shenzhen, Guangdong 518055, China}\\
$^b$ \textit{\small Guangdong-Hong Kong-Macao Joint Laboratory for Data-Driven Fluid Mechanics and Engineering Applications,}\\
\textit{\small Southern University of Science and Technology}\\
\textit{\small 1088 Xueyuan Avenue, Shenzhen, Guangdong 518055, China}\\
$^{*}$ \small \textit{E-mail address:} liuj36@sustech.edu.cn, liujuy@gmail.com
}
\date{}
\maketitle

\section*{Abstract}
We present a structure-preserving scheme based on a recently-proposed mixed formulation for incompressible hyperelasticity formulated in principal stretches. Although there exist Hamiltonians introduced for quasi-incompressible elastodynamics based on different variational formulations, the one in the fully incompressible regime has yet been identified in the literature. The adopted mixed formulation naturally provides a new Hamiltonian for fully incompressible elastodynamics.  Invoking the discrete gradient formula, we are able to design fully-discrete schemes that preserve the Hamiltonian and momenta. The scaled mid-point formula, another popular option for constructing algorithmic stresses, is analyzed and demonstrated to be non-robust numerically. The generalized Taylor-Hood element based on the spline technology conveniently provides a higher-order, robust, and inf-sup stable spatial discretization option for finite strain analysis. To enhance the element performance in volume conservation, the grad-div stabilization, a technique initially developed in computational fluid dynamics, is introduced here for elastodynamics. It is shown that the stabilization term does not impose additional restrictions for the algorithmic stress to respect the invariants, leading to an energy-decaying and momentum-conserving fully discrete scheme. A set of numerical examples is provided to justify the claimed properties. The grad-div stabilization is found to enhance the discrete mass conservation effectively. Furthermore, in contrast to conventional algorithms based on Cardano's formula and perturbation techniques, the spectral decomposition algorithm developed by Scherzinger and Dohrmann is robust and accurate to ensure the discrete conservation laws and is thus recommended for stretch-based material modeling.
\vspace{5mm}

\noindent \textbf{Keywords:} Structure-preserving scheme, Elastodynamics, Isogeometric analysis, Stretch-based model, Grad-div stabilization, Discrete mass conservation

%
%
%

\section{Introduction}
\subsection{Motivation and literature survey}
\label{sec:motivation}
Preserving physical and geometrical properties in the fully discrete model has been an active research topic in computational science and engineering for decades. The foremost property is stability which guarantees numerical convergence. The pathological energy growth of the trapezoidal rule suggests stable schemes developed for linear problems are generally inapplicable to nonlinear analysis \cite{Hughes1983}, and it thus stimulates the development of energy-conserving schemes. In continuum mechanics, the total linear and angular momenta are often demanded to be preserved in the solutions; from the perspective of mathematical physics, preserving the symplectic character in the phase space can be salubrious. Designing algorithms that preserve invariants has led to a research area known as the \textit{structure preserving integrators}, and abundant numerical evidences have justified its advantage. Nevertheless, it is by no means a trivial task to preserve energy and symplecticity simultaneously \cite{Ge1988,Kane1999}. The superiority of either invariant over the other is controversial \cite{Simo1993,Marsden1998,Lew2004}. In this study, we restrict our discussion to schemes that preserve or dissipate energy and conserve momenta, and they are often referred to as the \textit{energy-momentum consistent} schemes. 

The development of energy-momentum consistent schemes heretofore is briefly summarized as follows. The initial attempt was made by enforcing the discrete conservation laws via the Lagrange multiplier method \cite{Hughes1978b}. Its major drawback is that adding constraints engenders difficulty in achieving convergence for the nonlinear solver \cite{Kuhl1996}. Inspired by the conserving algorithms developed for particle dynamics \cite{Greenspan1984}, an energy-momentum conserving method was introduced by modifying the mid-point rule with the stress evaluated in terms of a convex combination of the right Cauchy-Green tensors within a time interval \cite{Simo1992b}. The combination parameter needs to be determined iteratively through an algorithm for general materials based on the mean value theorem \cite{Laursen2001}. To alleviate the algorithmic complexity, the discrete gradient formula was proposed to provide an algorithmic stress definition, which conserves energy and momentum and conveniently works for general material models \cite{Gonzalez2000b}. Following that, several alternate algorithmic stress formulas have been proposed \cite{Romero2012}, and they have been successfully applied to shell dynamics \cite{ Miehe2001}, multibody systems \cite{Goicolea2000}, elastoplasticity \cite{Armero2006,Armero2007,Meng2002}, viscoelasticity \cite{Gross2010, Conde2014}, contact and impact \cite{Betsch2007, Hauret2006}, and incompressible elastodynamics \cite{Gonzalez2000b}. A Petrov-Galerkin finite element formulation was utilized in time for Hamiltonian systems \cite{ Betsch2001}. The aforementioned discrete gradient method can be realized within that framework by adopting a non-conventional quadrature formula. Recently, a framework based on polyconvexity has been developed \cite{Bonet2015,Bonet2015a}, and its combination with the discrete gradient method has further led to a multifield variational formulation \cite{Betsch2018}, with applications in thermo-elasticity \cite{Ortigosa2020}, viscoelasticity \cite{Kruger2016} and electro-mechanics\cite{Ortigosa2018,Franke2022}, to list a few. 

The energy-momentum schemes have been further generalized with dissipation effects taken into account. Dissipation arises due to numerical and physical mechanisms. The former is often purposely introduced to damp the error accumulation caused by poorly resolved modes from spatial discretization. This goal was first achieved in linear dynamics by the HHT-$\alpha$ \cite{Hilber1977} and generalized-$\alpha$ schemes \cite{Chung1993}. In nonlinear analysis, different approaches were made to introduce analogous damping effects on the high-frequency modes \cite{Armero1998,Armero2001a,Armero2001b,Kuhl1999}. On the other hand, using the energy-momentum method in non-reversible processes may also lead to the decay of the energy. Examples include elastoplasticity \cite{Armero2006,Armero2007,Meng2002}, viscoelasticity \cite{Gross2010, Conde2014}, and thermo-elasticity \cite{Franke2018, Ortigosa2020}. It is worth mentioning that the General Equations for Non-Equilibrium Reversible Irreversible Coupling (GENERIC) framework provides a systematic approach of extending the energy-momentum schemes to systems with physical dissipative mechanisms \cite{Romero2009,Kruger2016}. 

Many engineering and biomedical materials exhibit volume-preserving behavior under large deformation. Devising a structure-preserving scheme that works well for incompressible elastodynamics is propitious yet non-trivial. In \cite{Gonzalez2000b}, Gonzalez resorted to the \textit{quasi-incompressible} formulation based on the three-field variational principle \cite{Simo1985} and identified a modified Hamiltonian\footnote{In the definition of $\tilde{H}$, $W$ is the Helmholtz strain energy function, $\Theta$ is a kinematic field variable independently interpolated, $p$ and $\lambda$ are pressure-like Lagrange multipliers enforcing $J=\Theta$ and $\Theta = 1$, respectively.}
\begin{align*}
\tilde{H} := \int_{\Omega_{\bm X}} \frac12 \rho_0 \|\bm V\|^2 + W\left( \Theta^{2/3} \tilde{\bm C} \right) + p \left( J - \Theta \right) + \lambda \left( \Theta - 1 \right) d\Omega_{\bm X} 
\end{align*}
as an invariant for the three-field variational formulation. Invoking the discrete gradient formula, the author designed a time-stepping scheme that preserves the modified Hamiltonian $\tilde{H}$. This strategy was later extended to stretch-based elasticity problems \cite{Janz2019}. In \cite{Hauret2006}, the authors considered an alternate approach based on the two-field variational formulation \cite{Sussman1987}, in which a pressure-like variable is introduced to be interpolated independently. The Hamiltonian of the form\footnote{In the definition of $\hat{H}$, $p$ is a pressure-like variable acting as a Lagrange multiplier, and $\varepsilon$ is a non-negative number proportional to the inverse of the bulk modulus.}
\begin{align*}
\hat{H} := \int_{\Omega_{\bm X}} \frac12 \rho_0 \|\bm V\|^2 + W\left( \bm C \right) + \frac{\varepsilon}{2} p^2 d\Omega_{\bm X}
\end{align*}
is conserved after applying the discrete gradient formula to design an algorithmic stress. However, that study was still restricted to the \textit{quasi-incompressible} scenario.

\subsection{Main contribution of the proposed method}
In this work, we aim to develop a robust numerical framework for elastodynamics subjecting to the incompressibility constraint. In the incompressible limit, conventional approaches suffer from a singularity issue because the volume ratio $J$ remains $1$, and derivatives with respect to $J$ are undefined mathematically. To overcome this issue, it was suggested to perform a Legendre transformation on the volumetric part of the strain energy, resulting in a formulation based on the Gibbs free energy \cite{Liu2018}. It can be viewed as a generalization of the Herrmann variational principle \cite{Herrmann1965,Reissner1984} to the finite deformation setting, and it allows a well-behaved description for both compressible and fully-incompressible materials. As will be shown, the summation of the kinetic energy and the isochoric part of the potential energy, i.e.,
\begin{align*}
H := \int_{\Omega_{\bm X}} \frac12 \rho_0 \|\bm V\|^2 + G_{\mathrm{ich}}\left( \tilde{\bm C} \right) d\Omega_{\bm X}
\end{align*}
can be identified as a Hamilton for incompressible finite elastodynamics. We mention that the isochoric potential energy $G_{\mathrm{ich}}$ is, in fact, identical to that of the Helmholtz strain energy \cite{Liu2018}, meaning the Hamiltonian identified here is similar to but still different from the aforementioned Hamiltonians \cite{Gonzalez2000b,Hauret2006}. Based on this, one primary goal of this work is to design a scheme that preserves the invariants in the fully discrete formulation. Inspired by the discrete gradient approach \cite{Gonzalez2000b}, an algorithmic stress formula is applied for the isochoric part of the stress to achieve this goal. The scaled mid-point formula \cite{Chorin1978,Orden2021} is also considered as an alternate option for constructing the algorithmic stress. Although it is theoretically appealing \cite{Gonzalez1996,Orden2021}, its denominator is not a metric for the difference of the deformation tensors, and its numerical robustness becomes questionable. Our numerical evidence confirms this concern, and the scaled mid-point formula is not recommended for future investigation.

In this study, the material models are formulated in principal stretches to account for general isotropic hyperelastic materials. In particular, the stress and the elasticity tensor of the Ogden-type materials are carefully derived. A missing term in the documented formula for the elasticity tensor \cite[Eqn.~(6.197)]{Holzapfel2000} is identified, which has an impact on the convergence behavior for the Newton-Raphson iteration. We mention that, in practice, the quality of the discrete conservation laws is strongly influenced by the accuracy of the solution for the nonlinear systems \cite{Betsch2000}. Therefore, the missing term can be critical in certain applications. Moreover, for the eigenvalue-based models, the algorithm for the spectral decomposition impacts the solution quality substantially. To date, algorithms based on Cardano's formula are still widely adopted. However, it necessitates a perturbation technique when two or three eigenvalues coincide \cite{Miehe1993,Simo1992c}, which is known to have a detrimental impact on the eigenvector accuracy \cite{Hartmann2003,Scherzinger2008}. In the context of conserving integrators, the perturbation technique further causes the loss of the discrete conservation properties and demands a specially-developed strategy to mitigate that issue \cite{Mohr2008}. The algorithm developed by Scherzinger and Dohrmann is a robust alternative for spectral decomposition \cite{Scherzinger2008}. Its accuracy has been demonstrated to be comparable with that of LAPACK \cite{Harari2022}. In this study, we adopt this algorithm for the constitutive modeling of Ogden-type materials. Numerical tests indicate that it is as accurate as the invariant-based models in evaluating the algorithmic stresses, and there is thus no need to invoke the aforesaid strategy \cite{Mohr2008}.

For incompressible elastodynamics, the volume ratio $J$ is an invariant for the motion as well. Its preservation in the integrator is worth pursuing and is quite challenging \cite{Auricchio2005,Auricchio2010,Auricchio2013}. Within our framework, we propose a plan of two essential steps to achieve this goal. First, a spatial discretization strategy is needed to ensure the discrete satisfaction of the divergence-free condition. Second, with that velocity field, a time integration algorithm for the deformation state is demanded to ensure volume preservation. A time-stepping algorithm that satisfies this point is known as the volume-conserving algorithm \cite{Feng1995,McLachlan2002}. In this work, the Galerkin projection is made with a smooth generalization of the Taylor-Hood element using the spline technology \cite{Liu2019a,Liu2021}. This type of mixed element is convenient for implementation, inf-sup stable, robust for large-strain analysis, and related to the concept of isogeometric analysis \cite{Hughes2005}. Yet, a drawback is that its constraint ratio is large \cite[Sec.~4.3.7]{Hughes1987}, especially when the polynomial degree is low. To enhance the solution quality, the grad-div stabilization technique is introduced in our proposed formulation. This stabilization technique has been found to be rather effective in improving the discrete mass conservation in fluid mechanics and transport problems \cite{Olshanskii2002,Colomes2016,John2017} and can be interpreted as a sub-grid model \cite{Olshanskii2009}. Interestingly, it has been proved that, as the stabilization parameter approaches infinity, the solution of the Taylor-Hood element with the grad-div stabilization converges to that of the Scott-Vogelius element \cite{Scott1985a}, which is discretely divergence-free \cite{Case2011}. In this work, we introduce the grad-div stabilization to the study of elastodynamics as the first step in our proposed roadmap. We also show that the grad-div stabilization term respects the momentum conservation and dissipates the energy, thereby rendering an energy-decaying and momentum-conserving scheme. It is anticipated that the proposed numerical framework may offer a robust and accurate approach for elastodynamic analysis.

\subsection{Structure and content of the paper}
The body of this work is organized as follows. In Section \ref{sec:elastodynamics}, the strong-form problem and the constitutive relations are presented. After that, we present the major contribution of this work in Section \ref{sec:numerical_formulations}, including the energy-momentum consistent formulation, different algorithmic stress designs, grad-div stabilization, and a segregated predictor multi-corrector algorithm. In Section \ref{sec:numerical_examples}, the claimed numerical attributes are demonstrated by numerical examples. We draw conclusions in Section \ref{sec:conclusion}.

\section{Elastodynamics}
\label{sec:elastodynamics}
\subsection{Kinematics}
We start by summarizing the notations to be used in the formulation of the problem. Let the bounded open sets $\Omega_{\bm X}$ and $\Omega_{\bm x}^t \subset \mathbb R^3$ represent the initial and current configurations of a continuum body, respectively. Their boundaries are assumed to be Lipschitz and are denoted by $\Gamma_{\bm X}$ and $\Gamma_{\bm x}^t$, with unit outward normals $\bm{N}$ and $\bm{n}$, respectively. The boundary $\Gamma_{\bm X}$ can be decomposed into two non-overlapping parts, that is, $\Gamma_{\bm X} = \Gamma^{G}_{\bm X} \cup \Gamma^{H}_{\bm X}$, $\emptyset = \Gamma^{G}_{\bm X} \cap \Gamma^{H}_{\bm X}$. The subdivisions $\Gamma^{G}_{\bm X}$ and $\Gamma^{H}_{\bm X}$ will be associated with the Dirichlet and Neumann boundary conditions, respectively. As usual, we assume that there is a diffeomorphism between the two configurations,
\begin{align*}
\bm \varphi_t( \cdot ) := \bm \varphi(\cdot, t) : \Omega_{\bm X} &\rightarrow \Omega^t_{\bm x} := \bm \varphi(\Omega_{\bm X},t) = \bm \varphi_t(\Omega_{\bm X}), \quad \forall t \geq 0, \displaybreak[2] \\
\bm {X} &\mapsto \bm {x} = \bm \varphi(\bm {X},t) = \bm \varphi_t(\bm {X}),
\end{align*}
where $\bm {X} \in \Omega_{\bm X}$ labels a point in the initial configuration, and $\bm {x} \in \Omega^t_{\bm x}$ represents a point in the spatial configuration. The displacement and velocity are defined as
\begin{align*}
\bm U(\bm X,t) := \bm x - \bm X = \bm \varphi(\bm X, t) - \bm X, \quad \bm V(\bm X,t) :=  \left. \frac{\partial \bm U}{\partial t} \right|_{\bm X} = \frac{d}{dt} \bm U = \frac{d}{dt} \bm \varphi,
\end{align*}
in which $d(\cdot)/dt$ connotes the total time derivative. The deformation gradient, right Cauchy-Green deformation tensor, and Jacobian determinant are defined as
\begin{align*}
\bm {F} := \frac{\partial \bm \varphi_t}{\partial \bm{X}}, \qquad \bm {C} := \bm {F}^{\mathrm{T}} \bm {F}, \qquad J := \mathrm{det}(\bm {F}),
\end{align*}
where the superscript $\mathrm{T}$ represents the transpose of a tensor. The tensor $\bm C$ enjoys the spectral representation,
\begin{align}
\label{eq: spectral representation of C}
\bm C := \sum_{a=1}^3 \lambda_a^2 \bm{N}_a \otimes\bm{N}_a,
\end{align}
wherein $\lambda_a$ are the principal stretches, and $ \bm{N}_a$ are the principal referential directions, for $a=1,2,3$. The distortional part of $\bm C$ is given by
\begin{align*}
\tilde{\bm C} := J^{-\frac23} \bm C,
\end{align*}
which is also known as the modified right Cauchy-Green tensor and characterizes volume-preserving deformations. The principal stretches of $\tilde{\bm C}$ are given by $\tilde{\lambda}_a = J^{-1/3}\lambda_a$ and are referred to as the modified principal stretches. The derivative of $\tilde{\bm C}$ with respect to $\bm C$ can be represented in the following form,
\begin{align*}
\frac{\partial \tilde{\bm C}}{\partial \bm C} = J^{-\frac23} \mathbb P^{\mathrm{T}} \qquad \mbox{ and } \qquad \mathbb P:= \mathbb I - \frac13 \bm C^{-1} \otimes \bm C.
\end{align*}
In the above, $\mathbb I$ is the fourth-order identity tensor, and $\mathbb P$ is a projection tensor that renders a second-order tensor to be deviatoric under the Lagrangian setting. On the initial configuration, a scalar field $P$ needs to be introduced as the thermodynamic pressure, and its counterpart on the current configuration is denoted as $p := P \circ \bm \varphi_t^{-1}$. Additionally, the norm of a tensor $\bm A$ is denoted by $\|\bm A \|$ and is defined as $\|\bm A\| = (\bm A : \bm A)^{1/2}$.

\subsection{The initial-boundary value problem}
In this work, we restrict our discussion to the isothermal setting, leaving the energy equation decoupled from the mechanical system. The motion of the continuum body is governed by the following equations posed on the initial configuration,
\begin{align}
\label{eq:IBVP-kinematics}
&\frac{d \bm{U}}{dt} - \bm{V} = \bm 0, && \mbox{ in } \Omega_{\bm X} \times (0,T), \displaybreak[2] \\
\label{eq:IBVP-mass}
&J\beta \frac{dP}{dt} + \nabla_X \bm{V} : ( J \bm{F}^{-\mathrm{T}} ) = 0, && \mbox{ in } \Omega_{\bm X} \times (0,T), \displaybreak[2] \\
\label{eq:IBVP-momentum}
&\rho_0 \frac{d \bm{V}}{dt} - \nabla_X \cdot \bm P - \rho_0 \bm{B} = \bm 0,  && \mbox{ in } \Omega_{\bm X} \times (0,T), \displaybreak[2] \\
& \bm {U} = \bm{\bm{G}} \quad \mbox{ and }  \quad \bm{V} = \frac{d \bm{G}}{d t}, && \mbox{ on } \Gamma^{G}_{\bm X} \times (0,T), \displaybreak[2] \\
& \bm{P} \bm{N} = \bm{H}, && \mbox{ on } \Gamma^{H}_{\bm{X}} \times (0,T), \displaybreak[2] \\
\label{eq:IBVP-initial}
&\bm{U}(\bm{X}, 0) = \bm{U}_0(\bm{X}), \quad P(\bm{X}, 0) = P_0(\bm{X}), \quad \bm{V}(\bm{X}, 0) = \bm{V}_0(\bm{X}), && \mbox{ in } \Omega_{\bm X}.
\end{align}
In the above, $\beta$ is the isothermal compressibility factor, $\bm{P}$ is the first Piola-Kirchhoff stress, $\bm B$ is the body force per unit mass, $\bm G$ is the displacement prescribed on $\Gamma_{\bm X}^{G}$, $\bm H$ is the prescribed traction on $\Gamma^{H}_{\bm X}$, and $\bm U_0$, $P_0$, and $\bm V_0$ are the initial data. The system \eqref{eq:IBVP-kinematics}-\eqref{eq:IBVP-initial} constitutes the initial-boundary value problem for the continuum body, and the system gets closed once the material constitution is specified. Under the isothermal condition, the Gibbs free energy $G$ for hyperelastic materials enjoys the following structure, 
\begin{align}
\label{eq:IBVP-Gibbs-free-energy}
G(\tilde{\bm{C}}, P) = G_{\mathrm{ich}}(\tilde{\bm{C}} ) +  G_{\mathrm{vol}}(P).
\end{align}
In the above, the free energy $G$ is additively split into the isochoric part $G_{\mathrm{ich}}$ and the volumetric part  $G_{\mathrm{vol}}$. Notice that we choose to use `$\mathrm{ich}$', rather than `$\mathrm{iso}$' \cite{Holzapfel2000}, as the subscript for the isochoric part, because we feel the subscript `$\mathrm{iso}$' may misleadingly infer \textit{isotropic} quantities. This structure of the Gibbs free energy has been justified and results in the following constitutive relations for the density $\rho$, the isothermal compressibility factor $\beta$, and the Cauchy stress $\bm \sigma$ \cite{Liu2018,Liu2021},
\begin{align}
\label{eq:constitutive-relations}
\rho = \rho_0 \left( \frac{dG_{\mathrm{vol}}}{dP} \right)^{-1}, \quad \beta = \frac{1}{\rho}\frac{d \rho}{d P}, \quad \bm \sigma = \bm \sigma_{\mathrm{dev}} - p \bm I, \quad \bm \sigma_{\mathrm{dev}} := J^{-1} \tilde{\bm {F}} \left( \mathbb P : \tilde{\bm {S}} \right) \tilde{\bm{F}}^{\mathrm{T}}, \quad \tilde{\bm{S}} := 2 \frac{\partial G_{\mathrm{ich}}}{\partial \tilde{\bm{C}}} .
\end{align}
where the term $\tilde{\bm S}$ is known as the fictitious second Piola-Kirchhoff stress. Through the pull-back operation, the second Piola-Kirchhoff stress $\bm S$ can be obtained as follows,
\begin{align}
\label{eq:constitutive_S}
& \bm S := J \bm F^{-1} \bm \sigma \bm F^{-\mathrm{T}} = \bm S_{\mathrm{ich}} + \bm S_{\mathrm{vol}}, \\
\label{eq:consitutive_S_ich}
& \bm S_{\mathrm{ich}} := J \bm F^{-1} \bm \sigma_{\mathrm{dev}} \bm F^{-\mathrm{T}} = J^{-\frac23} \mathbb P : \left( 2 \frac{\partial G_{\mathrm{ich}}}{\partial \tilde{\bm C}} \right), \\
& \bm S_{\mathrm{vol}} := \frac13 \mathrm{tr}\left[ \bm \sigma \right] J \bm F^{-1} \bm F^{-\mathrm{T}} = -J P \bm C^{-1}.
\end{align}
Subsequently, the first Piola-Kirchhoff stress can be determined by $\bm P = \bm F \bm S$. 

\subsection{Constitutive relations in principal stretches}
\label{sec:hyperelasticity}
We complete the problem statement by defining the elastic material behavior through the free energy \eqref{eq:IBVP-Gibbs-free-energy}. The volumetric part $G_{\mathrm{vol}}(P)$ characterizes the dilational response. In this work, we focus on \textit{fully incompressible} materials, whose volumetric free energy is $G_{\mathrm{vol}}(P) = P$ \cite{Liu2018}. It leads to $\rho = \rho_0$ and $\beta = 0$ following the constitutive relations \eqref{eq:constitutive-relations}.

Since the Legendre transformation is only performed for the volumetric energy, the isochoric part of the energy adopts the same form in both Gibbs- and Helmholtz-type strain energies \cite{Herrmann1965,Liu2018,Shariff1997}. Here we focus on the Ogden model written in terms of the modified principal stretches. This model accurately describes the mechanical behavior of rubber-like materials \cite{ogden1972} and incorporates various well-known models as its particular instantiation \cite[Chapter 6.5]{Holzapfel2000}. Here, without abuse of notation, we use $G_{\mathrm{ich}}( \tilde{\lambda}_1, \tilde{\lambda}_2, \tilde{\lambda}_3 )$ to denote the energy function in terms of the stretches. Under the Valanis-Landel hypothesis, the Ogden model can be written in the form
\begin{align}
\label{eq:ogden-isochoric-energy}
G_{\mathrm{ich}}( \tilde{\lambda}_1, \tilde{\lambda}_2, \tilde{\lambda}_3 ) = \sum_{a=1}^{3} \varpi(\tilde{\lambda}_a), \quad \mbox{ with } \quad \varpi(\tilde{\lambda}_a) := \sum_{p=1}^{N} \frac{\mu_p}{\alpha_p}( \tilde{\lambda}_{a}^{\alpha_p} - 1 ).
\end{align}
Following \eqref{eq:consitutive_S_ich}, the isochoric second Piola-Kirchhoff stress can be explicitly represented as
\begin{align}
\label{eq:ogden-S-spectral-form}
\bm {S}_{\mathrm{ich}} = \sum_{a=1}^{3} S_{ \mathrm{ich} ~ a } \bm {N}_a \otimes \bm {N}_a, 
\end{align}
with
\begin{align*}
\quad S_{ \mathrm{ich} ~ a} = \frac{1}{\lambda_a^2} \left( \tilde{\lambda}_a \frac{\partial G_{\mathrm{ich}} }{\partial \tilde{\lambda}_a} -\frac{1}{3}\sum_{b=1}^{3} \tilde{\lambda}_b \frac{ \partial G_{\mathrm{ich}} }{ \partial \tilde{ \lambda}_b } \right) \quad \mbox{and} \quad \frac{\partial G_{\mathrm{ich}} }{\partial \tilde{\lambda}_a} = \frac{d\varpi}{d\tilde{\lambda}_a} = \sum_{p=1}^{N} \mu_p \tilde{\lambda}_a^{\alpha_p-1}.
\end{align*}
The above formula is known as the spectral form for the isochoric stress, and its derivation is documented in \cite[p.~246]{Holzapfel2000}. The elasticity tensor $\mathbb{C}_{\mathrm{ich}}$ can be obtained through the rate formulation \cite[Page 257]{Holzapfel2000} and is represented as
\begin{align}
\label{eq:CC-rate-form}
\mathbb{C}_{\mathrm{ich}} := 2\frac{\partial \bm S_{\mathrm{ich}}}{\partial \bm C} =& \sum_{a,b=1}^{3}\frac{1}{\lambda_b}\frac{\partial S_{\mathrm{ich~a}}}{\partial \lambda_b} \bm {N}_a \otimes \bm {N}_a \otimes \bm {N}_b \otimes \bm {N}_b \nonumber \\
&+ \sum_{\substack{a,b=1 \\ a\neq b}}^{3} \frac{ S_{\mathrm{ich~b}} - S_{\mathrm{ich~a}} }{\lambda_b^2 - \lambda_a^2}( \bm {N}_a \otimes \bm {N}_b \otimes \bm {N}_a \otimes \bm {N}_b + \bm {N}_a \otimes \bm {N}_b \otimes \bm {N}_b \otimes \bm {N}_a ).
\end{align}
Restricting to the Ogden model \eqref{eq:ogden-isochoric-energy}, one may obtain the following expression,
\begin{align}
\label{eq:CC-spectral-form}
\frac{1}{\lambda_b}\frac{\partial S_{\mathrm{ich~a}}}{\partial \lambda_b} =&
\begin{cases}
\lambda_a^{-4}\sum\limits_{p=1}^{N}\mu_p \alpha_p \left( (\frac{1}{3}-\frac{2}{\alpha_p}) \tilde{\lambda}_a^{\alpha_p} + (\frac{1}{9}+\frac{2}{3\alpha_p}) \sum\limits_{c=1}^3 \tilde{\lambda}_c^{\alpha_p}  \right) &  a=b, \\[2.0em]
\lambda_a^{-2}\lambda_b^{-2}\sum\limits_{p=1}^N \mu_p \alpha_p\left(-\frac{1}{3}
\tilde{\lambda}_b^{\alpha_p}-\frac{1}{3}\tilde{\lambda}_a^{\alpha_p} + \frac{1}{9}\sum\limits_{c=1}^3 \tilde{\lambda}_c^{\alpha_p}\right) & a\neq b.
\end{cases}
\end{align}
The above formula for the tensor components is different from the one given in \cite[p.~264]{Holzapfel2000} or \cite[p.~285]{Simo1991}, in which two missing terms can be identified. Our experience is that the missing terms may cause a loss of the convergence rate in the Newton-Raphson iterations, at least in certain cases, which further leads to the degradation of the discrete conservation properties (see Section \ref{sec:twisting-column}). A detailed derivation of \eqref{eq:CC-spectral-form} is given in Appendix \ref{sec:appendix-A}. 

\begin{remark}
When there are identical principal stretches, the quotient formula $(S_b - S_a)/(\lambda_b^2 - \lambda_a^2)$ in \eqref{eq:CC-rate-form} needs to be replaced by its limit
\begin{align*}
\lim_{\lambda_b \rightarrow \lambda_a} \frac{ (S_b - S_a)}{ \lambda_b^2 - \lambda_a^2 } = \frac{\partial S_b}{\partial \lambda_b^2} - \frac{\partial S_a}{\partial \lambda_b^2} = \frac{1}{2} ( \frac{\partial S_b}{\partial \lambda_b} - \frac{\partial S_a}{\partial \lambda_b} ),
\end{align*}
with the aid of the L'H\^{o}spital's rule.
\end{remark}

\subsection{Invariants of the motion}
We introduce a Hamiltonian of the system as the sum of the kinetic energy and the isochoric part of the Gibbs free energy, that is,
\begin{align}
\label{eq:continuum-H-def}
H := K + V, \quad K:= \int_{\Omega_{\bm X}} \frac12 \rho_0 \| \bm V \| ^2 d\Omega_{\bm X}  d\Omega_{\bm X}, \quad V:= \int_{\Omega_{\bm X}} G_{\mathrm{ich}} \left( \bm{\tilde{C}} \right) d\Omega_{\bm X}.
\end{align}
In the above, $K$ is the kinetic energy, and $V$ is the total isochoric potential energy. Different from the compressible or quasi-incompressible elastodynamics (e.g. the two Hamiltonians presented in Section \ref{sec:motivation}), the volumetric part of the energy is not involved in the definition of $H$. The total linear and angular momenta, two additional invariants, are defined as
\begin{align}
\label{eq:continuum-L-J-def}
\bm{L} := \int_{\Omega_{\bm X}} \rho_0 \bm{V} d\Omega_{\bm X}, \quad \mbox{ and } \quad \bm{J} := \int_{\Omega_{\bm X}} \rho_0 \bm \varphi_t \times \bm{V} d\Omega_{\bm X},
\end{align}
respectively. One primary goal of this work is to design a fully-discrete scheme that preserves the invariants \eqref{eq:continuum-H-def}-\eqref{eq:continuum-L-J-def} in the discrete solutions.

In addition to the above three invariants, the equation \eqref{eq:IBVP-mass} essentially state that the spatial velocity field is divergence-free, and thereby the Jacobian determinant $J$ is also preserved through the motion due to the fact that $dJ/dt = J \nabla_{\bm x} \cdot \bm v$. It has been known that preserving $J$ can be critical in detecting the stability range of the continuum problem \cite{Auricchio2010,Auricchio2013}. In the conventional formulations, different forms of the penalty function \cite{Schroder2017} and a variety of element pairs \cite{Auricchio2010,Auricchio2005} have been investigated. Nevertheless, the conventional approaches behave unsatisfactorily in dealing with these problems \cite{Auricchio2013}. Therefore, preserving $J$ is highly desirable and by no means trivial. Based on the new mixed formulation, we may address this issue in two steps, which involve purposely designed spatial and temporal discretization techniques. In this study, we focus on the first step of the strategic plan, that is, we strive to obtain a discrete divergence-free velocity field.

\begin{remark}
In the regime of compressible materials, one can get $\beta = 1/\kappa$ with a properly chosen $G_{\mathrm{vol}}$ \cite{Liu2019a}, in which $\kappa$ is the bulk modulus. With this, the Hamiltonian is appended by an integral of $p^2 / 2\kappa$ over the current configuration, which essentially recovers the Hamiltonian $\hat{H}$ \cite{Hauret2006} mentioned in Section \ref{sec:motivation}.
\end{remark}

\section{Numerical formulations}
\label{sec:numerical_formulations}
\subsection{Semi-discrete formulation}
\label{sec:semidiscrete-form}
In this section, we present the spatially discrete formulation for the initial-boundary value problem. The spatial projection is made by adopting a smooth generalization of the Taylor-Hood element based on the spline technology \cite{Buffa2011} for the following reasons. It is convenient as the displacement and velocity are interpolated via isoparametric elements. The spline technology enables a straightforward way of integrating with the CAD system within the paradigm of isogeometric analysis \cite{Hughes2005}. In the meantime, the robustness of the spline technology makes it an appealing higher-order candidate for large-strain analysis \cite{Lipton2010}. Also, recent analysis has also established its inf-sup stability property, making the choice mathematically sound \cite{Liu2019a,Rueberg2012}.

To construct the semi-discrete formulation, we need to define the spline spaces on the parametric domain $\hat{\Omega} := (0,1)^3$. A Cartesian mesh can be constructed for $\hat{\Omega}$ by open knot vectors $\Xi_d$, $d=1,2,3$. Given a set of weights, the NURBS basis functions can be constructed, and the space of the multivariate NURBS functions is denoted by $\mathcal R^{\mathsf p_1, \mathsf p_2, \mathsf p_3}_{\bm \alpha_1, \bm \alpha_2, \bm \alpha_3}$, in which $\mathsf p_d$ and $\bm \alpha_d$ represent the degree and interelement regularity in the $d$-th parametric direction, respectively. We may then define two discrete function spaces on $\hat{\Omega}$ as
\begin{align*}
\hat{\mathcal S}_h :=
\mathcal R^{\mathsf p+\mathsf a, \mathsf p+\mathsf a, \mathsf p+\mathsf a}_{\bm r_1+\mathsf b, \bm r_2+\mathsf b, \bm r_3+\mathsf b} \times \mathcal R^{\mathsf p+\mathsf a, \mathsf p+\mathsf a, \mathsf p+\mathsf a}_{\bm r_1+\mathsf b, \bm r_2+\mathsf b, \bm r_3+\mathsf b} \times \mathcal R^{\mathsf p+\mathsf a, \mathsf p+\mathsf a, \mathsf p+\mathsf a}_{\bm r_1+\mathsf b, \bm r_2+\mathsf b, \bm r_3+\mathsf b} \quad \mbox{ and } \quad
\hat{\mathcal P}_h := \mathcal R^{\mathsf p, \mathsf p, \mathsf p}_{\bm r_1, \bm r_2, \bm r_3},
\end{align*}
with integer parameters $1 \leq \mathsf a $ and $0 \leq \mathsf b < \mathsf a$. Here, for simplicity, we assume that the reference configuration of the body can be parametrized by a smooth, invertible geometrical mapping $\bm \psi : \hat{\Omega} \rightarrow \Omega_{\bm X}$. The discrete function spaces on $\Omega_{\bm X}$ can be defined through the pull-back operation,
\begin{align*}
\mathcal{S}_h : = \left \{ \bm{W}: \bm{W} \circ \bm \psi \in \hat{\mathcal{S}}_h \right \}, \qquad \mathcal{P}_h : = \left \{ Q: Q \circ \bm \psi \in \hat{\mathcal{P}}_h \right \}.
\end{align*}
With the above discrete spaces, the trial solution spaces for the displacement, pressure, and velocity on the referential configuration are defined as 
\begin{align*}
\mathcal S_{\bm{U}_h} &= \Big\lbrace \bm{U}_h : \bm{U}_h(\cdot, t) \in \mathcal S_h, \quad \bm{U}_h(\cdot,t) = \bm{G} \mbox{ on } \Gamma_{\bm X}^{G}, \quad t \in (0,T)  \Big\rbrace , \displaybreak[2] \\
\mathcal S_{P_h} &= \Big\lbrace P_h : P_h(\cdot, t) \in \mathcal P_h, \quad t \in (0,T) \Big\rbrace , \displaybreak[2] \\
 \mathcal S_{\bm{V}_h} &= \left\lbrace \bm{V}_h : \bm{V}_h(\cdot, t) \in \mathcal S_h, \quad \bm{V}_h(\cdot,t) = \frac{d\bm{G}}{dt} \mbox{ on } \Gamma_{\bm X}^{G}, \quad t \in (0,T) \right\rbrace.
\end{align*}
The corresponding test function spaces are
\begin{align}
\mathcal V_{\bm{U}_h} = \mathcal V_{\bm{V}_h} = \left\lbrace \bm{W}_h : \bm{W}_h(\cdot, t) \in \mathcal S_h, \quad \bm{W}_h(\cdot,t) = \bm 0 \mbox{ on } \Gamma_{\bm X}^{G}, \quad t \in (0,T) \right\rbrace \quad \mbox{and} \quad \mathcal V_{P_h} &=\mathcal S_{P_h}.
\end{align}
Notice that the discrete function spaces above are built based on a single-patch geometry. For many problems of practical interest, the geometries need to be represented by multiple patches. The discrete functions can be defined analogously, and the continuity is enforced across patch interfaces. We refer interested readers to \cite{Bucelli2021,Dittmann2019,Hughes2005} for a thorough discussion on this point. The semi-discrete formulation can then be stated as follows. Find $\bm Y_h(t) := \{\bm U_h(t), P_h(t), \bm V_h(t)\}^{\mathrm{T}} \in \mathcal{S}_{\bm U_h} \times \mathcal{S}_{P_h} \times \mathcal{S}_{\bm V_h}$ such that, for $t \in (0, T)$ 
\begin{align}
\label{eq:weakform-kinematics}
\bm 0 &= \frac{d \bm U_h}{dt} - \bm V_h, \\
\label{eq:weakform-mass}
0 &= \int_{\Omega_{\bm X}} Q_h J_h \nabla_{\bm X} \bm{V}_h : \bm{F}_h^{\mathrm{-T}} d\Omega_{\bm X}, \\
\label{eq:weakform-momentum}
0 &= \int_{\Omega_{\bm X}} \bm{W}_h \cdot \rho_0 \dot{\bm{V}}_h + \left( \bm{F}_h^{\mathrm{T}}\nabla_{\bm X} \bm{W}_h \right) : \bm{S}_{\mathrm{ich}~h} - J_h P_h \nabla_{\bm X} \bm{W}_h : \bm{F}_h^{\mathrm{-T}} - \bm{W}_h \cdot \rho_0 \bm B  d\Omega_{\bm X} - \int_{\Gamma_{\bm X}^H} \bm{W}_h \cdot \bm{H} d\Gamma_{\bm X} \nonumber \\
& \hspace{0.3cm} + \int_{\Omega_{\bm X}} \gamma J_h \left( \nabla_{\bm X} \bm{W}_h : \bm{F}_h^{\mathrm{-T}} \right) \left( \nabla_{\bm X} \bm{V}_h : \bm{F}_h^{\mathrm{-T}} \right)  d\Omega_{\bm X},
\end{align}
for $\forall \{ \bm W_h, Q_h \} \in \mathcal{V}_{\bm V_h} \times \mathcal{V}_{P_h}$. The initial solutions $\bm Y_h(0)=\{ \bm U_h(0), P_h(0), \bm V_h(0) \}$ are obtained through the $\mathcal L^2$-projection of the initial data. The last term in \eqref{eq:weakform-momentum} is the grad-div stabilization with a parameter $\gamma \geq 0$. This term can be introduced in a more general fashion by allowing the parameter $\gamma$ to vary in space and time  \cite{Olshanskii2009}. Without losing generality, we use a constant parameter $\gamma$ for the whole domain to simplify the subsequent discussion. In the following, we briefly discuss the energy and momentum conservation properties embedded in the semi-discrete formulation \eqref{eq:weakform-kinematics}-\eqref{eq:weakform-momentum}. First, if the boundary data $\bm G$ is time-independent, the spaces $\mathcal{S}_{V_h}$ and $\mathcal{V}_{V_h}$ become identical. Choosing $\bm W_h = \bm V_h$ in \eqref{eq:weakform-momentum} leads to the energy stability property,
\begin{align*}
\frac{d H_h}{dt} = P_{\mathrm{ext}~h} - \mathcal{D}_{h},
\end{align*}
wherein the spatially discrete Hamiltonian is
\begin{align*}
H_h := \int_{\Omega_{\bm X}} \frac12 \rho_0 \| \bm V_h \| ^2 d\Omega_{\bm X} + G_{\mathrm{ich}} \left( \bm{\tilde{C}}_h \right) d\Omega_{\bm X},
\end{align*}
and
\begin{align*}
\quad P_{\mathrm{ext}~h} := \int_{\Omega_{\bm X}} \bm{V}_h \cdot \rho_0 \bm B d\Omega_{\bm X} + \int_{\Gamma_{\bm X}^H} \bm{V}_h \cdot \bm{H}d\Gamma_{\bm X}  \quad \mbox{ and } \quad
\mathcal{D}_h := \int_{\Omega_{\bm X}} \gamma J_h \left(\nabla_{\bm X} \bm{V}_h : \bm F_h^{-\mathrm{T}} \right)^2 d\Omega_{\bm X}
\end{align*}
are the power of external loadings and the dissipation, respectively. We mention that the volumetric force, or the pressure, does not contribute to the variation of the Hamiltonian $H$. This fact is naturally built into the above formulation due to the fact that the constraint operator in \eqref{eq:weakform-mass} is conjugate to the gradient operator acting on pressure. In contrast, this may not hold if $J=1$ is imposed as the constraint equation in the conventional two-field variational formulation. 

Second, if $\Gamma^{G}_{\bm X} = \emptyset$, an arbitrary constant vector and its cross product with $\bm \varphi_{t~h} := \bm U_h(\bm X,t) + \bm X$ are both admissible test functions. This implies that the semi-discrete formulation is endowed with the following properties,
\begin{align*}
& \frac{d}{dt} \bm{L}_h := \frac{d}{dt} \int_{\Omega_{\bm X}} \rho_0 \bm V_h d\Omega_{\bm X} = \int_{\Omega_{\bm X}} \rho_0 \bm{B} d\Omega_{\bm X} + \int_{\Gamma_{\bm X}^{H}} \bm{H} d\Gamma_{\bm X}, \\
& \frac{d}{dt} \bm{J}_h := \frac{d}{dt} \int_{\Omega_{\bm X}} \rho_0 \bm \varphi_{t~h} \times \bm V_h d\Omega_{\bm X} = \int_{\Omega_{\bm X}} \rho_0 \bm \varphi_{t~h} \times \bm{B} d\Omega_{\bm X} + \int_{\Gamma_{\bm X}^{H}} \bm \varphi_{t~h} \times \bm{H} d\Gamma_{\bm X},
\end{align*}
representing the semi-discrete conservation of the linear and angular momenta.

Third, it is indeed desirable to have $\nabla_{\bm X}\bm V_h : \bm F_h^{-\mathrm{T}}=0 \in \mathcal V_{P_h}$ for an arbitrary admissible velocity solution $\bm V_h$, the equation \eqref{eq:weakform-mass} implies $\mathcal{D}_h = 0$, which further implies a pointwise satisfaction of the constraint equation. There exist discrete function spaces that satisfy this property \cite{Buffa2011,Evans2013,John2017,Scott1985a}, and this is a research direction worthy of further pursuing. For the discrete functions considered in this work, this property does not hold, and the term $\mathcal D_h$ is non-negative. It can be viewed as a penalization of the constraint and is dissipative in energy. This term was originally introduced in the context of stabilized finite element method \cite{Franca1988a} and can be interpreted as the sub-grid scale model for the pressure \cite{Olshanskii2009}. Its effect was not fully realized when used with equal-order interpolation. It recently gained popularity for inf-sup stable elements as it was recognized to have a significant impact on improving discrete mass conservation for incompressible flow and transport problems \cite{Colomes2016,John2010,Olshanskii2002,Olshanskii2009}. In particular, its solution asymptotically approaches the discrete divergence-free solution as the stabilization parameter gets larger \cite{Scott1985a}. Here, we introduce the grad-div stabilization mechanism to the above elastodynamics formulation with the goal of enhancing the discrete satisfaction of the divergence-free constraint.

\subsection{Temporal discretization}
\label{sec:temporal_discretization}
In this section, we perform the temporal discretization for the semi-discrete formulation \eqref{eq:weakform-kinematics}-\eqref{eq:weakform-momentum}. To simplify the notations, we will henceforth neglect the subscript $h$ used for indicating spatially discrete quantities. The time domain $(0, T)$ is discretized into $n_{\mathrm{ts}}$ sub-intervals $\mathcal I_n := (t_n, t_{n+1})$, with the time step size $\Delta t_n := t_{n+1}- t_n$. We denote the algorithmic approximation of a quantity $(\cdot)$ at time $t_n$ by  $(\cdot)_n$. Our fully-discrete formulation can be expressed as follows. Given $\bm Y_n := \{\bm U_n, P_n, \bm V_n\}^{\mathrm{T}}$, find $\bm Y_{n+1} := \{\bm U_{n+1}, P_{n+1}, \bm V_{n+1}\}^{\mathrm{T}} \in \mathcal{S}_{\bm U} \times \mathcal{S}_{P} \times \mathcal{S}_{\bm V}$ such that
\begin{align}
\label{eq:em-kinematic}
\bm 0 & = \frac{\bm{U}_{n+1} - \bm{U}_{n}}{\Delta t_n} - \bm{V}_{m}, \displaybreak[2] \\
\label{eq:em-mass}
0 &= \int_{\Omega_{\bm X}} Q J_{m} \nabla_{\bm X} \bm{V}_{m} : \bm{F}^{\mathrm{-T}}_{m} d\Omega_{\bm X}, \displaybreak[2] \\
\label{eq:em-momentum}
0 &= \int_{\Omega_{\bm X}} \bm{W} \cdot \rho_0 \frac{\bm{V}_{n+1} - \bm{V}_{n}}{\Delta t_n} + \left( \bm{F}^{\mathrm{T}}_{m}\nabla_{\bm X} \bm{W} \right) : \bm{S}_{\mathrm{ich}~\mathrm{alg}} - J_{m} P_{m} \nabla_{\bm X} \bm{W} : \bm{F}^{\mathrm{-T}}_{m} - \bm{W} \cdot \rho_0 \bm B_{m} d\Omega_{\bm X} \nonumber \displaybreak[2] \\
& \hspace{0.3cm} - \int_{\Gamma_{\bm X}^H} \bm{W} \cdot \bm{H}_{m} d\Gamma_{\bm X} + \int_{\Omega_{\bm X}} \gamma J_{m} \left( \nabla_{\bm X} \bm{W} : \bm{F}^{\mathrm{-T}}_{m} \right) \left( \nabla_{\bm X} \bm{V}_{m} : \bm{F}^{\mathrm{-T}}_{m} \right) ,
\end{align}
for $\forall \{ \bm W, Q \} \in \mathcal{V}_{\bm V} \times \mathcal{V}_P$, in which
\begin{gather}
 \left\lbrace  \bm{U}_{m}, P_{m}, \bm{V}_{m} \right\rbrace^{\mathrm{T}} :=  \frac12 \left\lbrace \bm{U}_{n} + \bm{U}_{n+1}, P_{n} + P_{n+1}, \bm{V}_{n} + \bm{V}_{n+1} \right\rbrace^{\mathrm{T}},\\
 \bm{F}_{m} := \nabla_{\bm X} \bm{U}_{m} + \bm I, \qquad
J_{m} := \mathrm{det}\left( \bm{F}_{m} \right), \qquad \bm{C}_{m} := \frac12 \left( \bm{C}_{n+1} + \bm{C}_{n} \right), \qquad \tilde{\bm{C}}_{m} := J^{-\frac23}_{m} \bm{C}_{m}, \\
 \label{eq:algorithmic Stress tensor}
 \bm{S}_{\mathrm{ich}~\mathrm{alg}} := \bm{S}_{\mathrm{ich}~m} + \bm{S}_{\mathrm{ich}~\mathrm{enh}}, \qquad 
 \bm S_{\mathrm{ich}~m} := \bm{S}_{\mathrm{ich}}\left( \bm{C}_{m} \right),\quad
\bm B_{m} := \bm B(t_{m}), \quad \bm H_{m} := \bm H(t_{m}).
\end{gather}
The discrete formulation \eqref{eq:em-kinematic}-\eqref{eq:algorithmic Stress tensor} is complete once the term $\bm S_{\mathrm{ich}~\mathrm{alg}}$, or equivalently the so-called ``stress enhancement" $\bm S_{\mathrm{ich}~\mathrm{enh}}$, is defined. There are a few design criteria for $\bm S_{\mathrm{ich}~\mathrm{alg}}$ to make the overall scheme \eqref{eq:em-kinematic}-\eqref{eq:algorithmic Stress tensor} energy-momentum consistent. First, it needs to satisfy the following property, i.e., 
\begin{align}
\label{eq:directionality-condition}
\bm S_{\mathrm{ich}~\mathrm{alg}} : \bm Z_n = G_{\mathrm{ich}}(\tilde{\bm{C}}_{n+1}) - G_{\mathrm{ich}}(\tilde{\bm{C}}_{n}),
\end{align}
with $\bm{Z}_{n} := \left( \bm{C}_{n+1} - \bm{C}_{n} \right)/2$. This ensures the relation $d G_{\mathrm{ich}}/dt = \bm S_{\mathrm{ich}} : d \bm C/2dt$ is inherited to the fully-discrete setting. We mention that the relation \eqref{eq:directionality-condition} is indeed the \textit{directionality} property restricted to the isochoric part of the energy and stress, which is slightly different from that in the compressible theory \cite{Romero2012}. With this property and proper boundary conditions, the discrete energy stability can be established.
\begin{proposition}
Assuming the boundary data $\bm{G}$ is time independent and the directionality property \eqref{eq:directionality-condition} holds, the scheme \eqref{eq:em-kinematic}-\eqref{eq:algorithmic Stress tensor} enjoys the following energy stability property
\begin{align}
\label{eq:em-prop-1}
\frac{1}{\Delta t_n} \left( H_{n+1} - H_n \right) = \int_{\Omega_{\bm X}} \rho_0 \bm{V}_{m} \cdot \bm B_{m} d\Omega_{\bm X} + \int_{\Gamma_{\bm X}^H} \bm{V}_{m} \cdot \bm H_{m} d\Gamma_{\bm X} -  \mathcal{D}_{m},
\end{align}
wherein
\begin{align}
\label{eq: discrete Hamiltonian and Dissipation}
H_n := \int_{\Omega_{\bm X}} \frac{\rho_0 \|\bm{V}_{n}\|^2}{2} + G_{\mathrm{ich}}\left( \tilde{\bm{C}}_{n} \right) d\Omega_{\bm X} \quad \mbox{ and } \quad \mathcal{D}_{m} := \int_{\Omega_{\bm X}} \gamma J_{m} \left( \nabla_{\bm X} \bm{V}_{m} : \bm{F}^{\mathrm{-T}}_{m} \right)^2 d\Omega_{\bm X} \geq 0.
\end{align}
\end{proposition}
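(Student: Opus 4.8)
The plan is to test the discrete momentum balance \eqref{eq:em-momentum} with the admissible choice $\bm W = \bm V_m$ and, simultaneously, the discrete mass balance \eqref{eq:em-mass} with $Q = P_m$, and then to show that the resulting identity collapses term-by-term into \eqref{eq:em-prop-1}. Write $H_n = K_n + V_n$ with $K_n := \int_{\Omega_{\bm X}} \tfrac12 \rho_0 \|\bm V_n\|^2 \, d\Omega_{\bm X}$ and $V_n := \int_{\Omega_{\bm X}} G_{\mathrm{ich}}(\tilde{\bm C}_n) \, d\Omega_{\bm X}$. The choice $\bm W = \bm V_m$ is legitimate precisely because $\bm G$ is time-independent, so that $\mathcal{S}_{\bm V}$ and $\mathcal{V}_{\bm V}$ coincide and $\bm V_m = (\bm V_n + \bm V_{n+1})/2$ lies in the test space; likewise $P_m \in \mathcal{S}_P = \mathcal{V}_P$. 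The inertial term is handled by the telescoping identity $\bm V_m \cdot (\bm V_{n+1} - \bm V_n) = \tfrac12(\|\bm V_{n+1}\|^2 - \|\bm V_n\|^2)$, which turns the first term of \eqref{eq:em-momentum} into $(K_{n+1} - K_n)/\Delta t_n$. The two loading terms directly yield the external-power contributions on the right-hand side of \eqref{eq:em-prop-1}, and the grad-div term with $\bm W = \bm V_m$ reproduces $\mathcal D_m$ of \eqref{eq: discrete Hamiltonian and Dissipation}.

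The crux --- and the step I expect to be the main obstacle --- is to show that the algorithmic stress power reproduces the increment of the isochoric potential energy. Using \eqref{eq:em-kinematic} one has $\nabla_{\bm X} \bm V_m = (\bm F_{n+1} - \bm F_n)/\Delta t_n$, while the mid-point structure gives $\bm F_m = (\bm F_{n+1} + \bm F_n)/2$. The key algebraic fact is then
\begin{align*}
\mathrm{sym}\!\left( \bm F_m^{\mathrm{T}} \nabla_{\bm X} \bm V_m \right) = \frac{1}{2\Delta t_n}\,\mathrm{sym}\!\left[ (\bm F_{n+1} + \bm F_n)^{\mathrm{T}}(\bm F_{n+1} - \bm F_n) \right] = \frac{\bm C_{n+1} - \bm C_n}{2\Delta t_n} = \frac{\bm Z_n}{\Delta t_n},
\end{align*}
where the antisymmetric remainder $\bm F_n^{\mathrm{T}}\bm F_{n+1} - \bm F_{n+1}^{\mathrm{T}}\bm F_n$ drops out under symmetrization. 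Since $\bm S_{\mathrm{ich}~\mathrm{alg}}$ is symmetric, the double contraction $(\bm F_m^{\mathrm{T}} \nabla_{\bm X} \bm V_m) : \bm S_{\mathrm{ich}~\mathrm{alg}}$ detects only the symmetric part and hence equals $\bm S_{\mathrm{ich}~\mathrm{alg}} : \bm Z_n / \Delta t_n$. Invoking the directionality property \eqref{eq:directionality-condition} converts this pointwise into $[G_{\mathrm{ich}}(\tilde{\bm C}_{n+1}) - G_{\mathrm{ich}}(\tilde{\bm C}_n)]/\Delta t_n$, whose integral over $\Omega_{\bm X}$ is exactly $(V_{n+1} - V_n)/\Delta t_n$.

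It remains to dispose of the pressure contribution. The term $-\int_{\Omega_{\bm X}} J_m P_m \nabla_{\bm X} \bm V_m : \bm F_m^{-\mathrm{T}}\, d\Omega_{\bm X}$ in \eqref{eq:em-momentum} is precisely the negative of the discrete mass residual \eqref{eq:em-mass} tested with $Q = P_m$, and therefore vanishes identically; this is the discrete counterpart of the fact that the volumetric force performs no work on the Hamiltonian. Collecting the pieces, the tested momentum identity becomes $(K_{n+1}-K_n)/\Delta t_n + (V_{n+1}-V_n)/\Delta t_n - \int_{\Omega_{\bm X}} \rho_0 \bm V_m \cdot \bm B_m \, d\Omega_{\bm X} - \int_{\Gamma_{\bm X}^H} \bm V_m \cdot \bm H_m \, d\Gamma_{\bm X} + \mathcal D_m = 0$, and rearranging with $H_n = K_n + V_n$ gives \eqref{eq:em-prop-1}. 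The only genuinely non-routine ingredient is the symmetrization identity above combined with the directionality hypothesis; the rest is bookkeeping.
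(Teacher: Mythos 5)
Your proposal is correct and follows essentially the same route as the paper's proof: testing \eqref{eq:em-momentum} with $\bm W = \bm V_m$ and \eqref{eq:em-mass} with $Q = P_m$, the telescoping identity for the kinetic term, the observation that $\bm F_m^{\mathrm{T}}\nabla_{\bm X}\bm V_m$ contracts with the symmetric $\bm S_{\mathrm{ich}~\mathrm{alg}}$ only through its symmetric part $\bm Z_n/\Delta t_n$, and then the directionality property. Your phrasing via the $\mathrm{sym}(\cdot)$ operator is just a repackaging of the paper's explicit cancellation of the skew-symmetric remainder $\bm F_n^{\mathrm{T}}\bm F_{n+1} - \bm F_{n+1}^{\mathrm{T}}\bm F_n$; there is no substantive difference.
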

\begin{proof}
The time independence of the boundary data $\bm{G}$ suggests the function spaces $\mathcal S_{\bm V}$ and $\mathcal V_{\bm V}$ are identical. We may thus choose $\bm{W} = \bm{V}_{m}$ in \eqref{eq:em-momentum}, leading to
\begin{align}
\label{eq:em-prop-2}
0 &= \int_{\Omega_{\bm X}} \bm{V}_{m} \cdot \rho_0 \frac{\bm{V}_{n+1} - \bm{V}_n}{\Delta t_n} + \left( \bm{F}^{\mathrm{T}}_{m}\nabla_{\bm X} \bm{V}_{m} \right) : \bm{S}_{\mathrm{ich}~\mathrm{alg}} -J_{m} P_{m} \nabla_{\bm X} \bm{V}_{m} : \bm{F}^{\mathrm{-T}}_{m} d\Omega_{\bm X} \nonumber \\
& \hspace{0.3cm} -\int_{\Omega_{\bm X}} \bm{V}_{m} \cdot \rho_0 \bm{B}_{m} d\Omega_{\bm X} -\int_{\Gamma_{\bm X}^H} \bm{V}_{m} \cdot \bm{H}_{m} d\Gamma_{\bm X} +\int_{\Omega_{\bm X}} \gamma J_{m} \left( \nabla_{\bm X} \bm{V}_{m} : \bm{F}^{\mathrm{-T}}_{m} \right)^2 d\Omega_{\bm X}.
\end{align}
The first term on the right-hand side of  \eqref{eq:em-prop-2} can be reorganized as
\begin{align*}
\int_{\Omega_{\bm X}} \bm{V}_{m} \cdot \rho_0 \frac{\bm{V}_{n+1} - \bm{V}_n}{\Delta t_n} d\Omega_{\bm X} = \int_{\Omega_{\bm X}} \rho_0 \frac{\|\bm{V}_{n+1}\|^2 - \|\bm{V}_{n}\|^2}{2 \Delta t_n} d\Omega_{\bm X}.
\end{align*}
Regarding the second term on the right-hand side of \eqref{eq:em-prop-2}, we make use of the kinematic relation \eqref{eq:em-kinematic} and get
\begin{align*}
& \int_{\Omega_{\bm X}} \left( \bm{F}^{\mathrm{T}}_{m} \nabla_{\bm X} \bm{V}_{m} \right) : \bm{S}_{\mathrm{ich}~\mathrm{alg}} d\Omega_{\bm X} = \frac{1}{\Delta t_n} \int_{\Omega_{\bm X}} \left( \bm{F}^{\mathrm{T}}_{m} \left( \bm{F}_{n+1} - \bm{F}_{n} \right) \right) : \bm{S}_{\mathrm{ich}~\mathrm{alg}} d\Omega_{\bm X} \displaybreak[2] \\
=& \frac{1}{\Delta t_n} \int_{\Omega_{\bm X}} \frac12 \left( \bm{F}^{\mathrm{T}}_{n+1} \bm{F}_{n+1} - \bm{F}^{\mathrm{T}}_{n} \bm{F}_{n} + \bm{F}^{\mathrm{T}}_{n} \bm{F}_{n+1} - \bm{F}^{\mathrm{T}}_{n+1} \bm{F}_{n} \right) : \bm{S}_{\mathrm{ich}~\mathrm{alg}} d\Omega_{\bm X} = \frac{1}{\Delta t_n} \int_{\Omega_{\bm X}} \bm{Z}_{n} : \bm{S}_{\mathrm{ich}~\mathrm{alg}} d\Omega_{\bm X} \\
=& \frac{1}{\Delta t_n} \int_{\Omega_{\bm X}} \left( G_{\mathrm{ich}}(\tilde{\bm{C}}_{n+1} ) - G_{\mathrm{ich}}(\tilde{\bm{C}}_{n}) \right) d\Omega_{\bm X}.
\end{align*}
Noticing that $\bm{S}_{\mathrm{ich}~\mathrm{alg}}$ is symmetric, and $\bm{F}^{\mathrm{T}}_{n} \bm{F}_{n+1} - \bm{F}^{\mathrm{T}}_{n+1} \bm{F}_{n}$ is skew-symmetric, the contraction of the two is zero in the second equality of the above derivation. The last equality of the above is due to the directionality property \eqref{eq:directionality-condition}. We may choose $Q = P_{m}$ in \eqref{eq:em-mass} by virtue of $\mathcal{S}_{P} = \mathcal{V}_{P}$, and the third term in \eqref{eq:em-prop-2} vanishes, indicating that the work done by the pressure does not contribute to the energy variation for fully incompressible materials. Based on the above discussion, the relation \eqref{eq:em-prop-1} is established.
\end{proof}
\begin{remark}
\label{remark:grad-div-dissipation-vanishment}
According to \eqref{eq: discrete Hamiltonian and Dissipation}, the dissipation $\mathcal{D}_{m}$ is governed by the value of $\gamma$ and $\|\nabla_{\bm x} \cdot \bm v_m\|_{\mathcal L_2}$. It has been demonstrated that the $\mathcal L_2$-norm of the discrete velocity divergence is bounded by $C / \gamma$ in the approximation of the Navier-Stokes equations using the Taylor-Hood element \cite{Case2011}. It is therefore reasonable to expect the dissipation approaches zero asymptotically when $\gamma$ goes to infinity. In practice, as the parameter $\gamma \rightarrow \infty$, the iterative solver will experience difficulty in achieving convergence \cite{Bowers2014}.
\end{remark}

Second, the algorithmic stress $\bm S_{\mathrm{ich}~\mathrm{alg}}$ needs to be symmetric to ensure the discrete angular momentum conservation. This is a well-known fact in the conventional formulation \cite{Gonzalez1996b,Simo1992}. Here we demonstrate that the newly-introduced grad-div stabilization term does not impose additional restrictions for the angular momentum conservation through the following analysis.
\begin{proposition}
Assuming $\Gamma^{G}_{\bm X} = \emptyset$, the discrete linear momentum is conserved in the following sense,
\begin{align}
\label{em:prop-L}
& \frac{1}{\Delta t_n} \left( \bm{L}_{n+1} - \bm{L}_{n} \right) = \int_{\Omega_{\bm X}} \rho_0 \bm{B}_{m} d\Omega_{\bm X} + \int_{\Gamma_{\bm X}^{H}} \bm{H}_{m} d\Gamma_{\bm X},
\end{align}
if further the algorithmic stress $\bm S_{\mathrm{ich}~\mathrm{alg}}$ is symmetric, the discrete angular momentum is conserved, i.e.,
\begin{align}
\label{em:prop-J}
& \frac{1}{\Delta t_n} \left( \bm{J}_{n+1} - \bm{J}_{n} \right) = \int_{\Omega_{\bm X}} \rho_0 \bm \varphi_{m} \times \bm{B}_{m} d\Omega_{\bm X} + \int_{\Gamma_{\bm X}^{H}} \bm \varphi_{m} \times \bm{H}_{m} d\Gamma_{\bm X}.
\end{align}
In the above,
\begin{align}
\label{eq: discrete momenta}
\bm{L}_{n} := \int_{\Omega_{\bm X}} \rho_0 \bm{V}_{n} d\Omega_{\bm X} \quad \mbox{ and } \quad \bm{J}_{n} := \int_{\Omega_{\bm X}} \rho_0 \bm \varphi_{n} \times \bm{V}_{n} d\Omega_{\bm X}
\end{align}
are the discrete approximations of $\bm{L}$ and $\bm{J}$ at time $t_n$.
\end{proposition}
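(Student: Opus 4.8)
The plan is to test the discrete momentum balance \eqref{eq:em-momentum} against two families of rigid-motion fields, exploiting the hypothesis $\Gamma^{G}_{\bm X} = \emptyset$, which renders all such fields admissible members of $\mathcal V_{\bm V}$. For the linear momentum \eqref{em:prop-L}, I would substitute a constant vector $\bm W = \bm c$. Since $\nabla_{\bm X}\bm c = \bm 0$, the isochoric stress term, the pressure term, and the grad-div stabilization term drop out identically, leaving only the inertial, body-force, and traction contributions. Rewriting the inertial integral through the definition of $\bm L_n$ in \eqref{eq: discrete momenta}, pulling the constant $\bm c$ outside the integrals, and invoking its arbitrariness yields \eqref{em:prop-L} at once.

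For the angular momentum \eqref{em:prop-J}, I would test against $\bm W = \bm c \times \bm \varphi_m$ with the midpoint configuration $\bm \varphi_m := \bm U_m + \bm X$. I would first record that this field is admissible: each of its components is a linear combination of the components of $\bm\varphi_m$, which all lie in the common scalar spline space used for the three displacement components, so $\bm c\times\bm\varphi_m \in \mathcal S_h$ and it trivially respects the (empty) Dirichlet condition. The argument then proceeds term by term. For the inertial term I would use the triple product $(\bm c\times\bm\varphi_m)\cdot(\cdot) = \bm c\cdot(\bm\varphi_m\times(\cdot))$ together with the telescoping identity
\begin{align*}
\bm\varphi_{n+1}\times\bm V_{n+1} - \bm\varphi_n\times\bm V_n = \bm\varphi_m\times(\bm V_{n+1}-\bm V_n) + (\bm\varphi_{n+1}-\bm\varphi_n)\times\bm V_m,
\end{align*}
in which the kinematic relation \eqref{eq:em-kinematic} gives $\bm\varphi_{n+1}-\bm\varphi_n = \Delta t_n\,\bm V_m$, so the final cross product is $\Delta t_n\,\bm V_m\times\bm V_m = \bm 0$; this collapses the inertial contribution to $\bm c\cdot(\bm J_{n+1}-\bm J_n)/\Delta t_n$. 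For the stress term I would compute $(\nabla_{\bm X}\bm W)_{il} = \epsilon_{ijk}c_j(\bm F_m)_{kl}$, from which $\bm F_m^{\mathrm T}\nabla_{\bm X}\bm W$ is seen to be skew-symmetric for each fixed $\bm c$, so its double contraction with the symmetric $\bm S_{\mathrm{ich}~\mathrm{alg}}$ vanishes. The same expression gives $\nabla_{\bm X}\bm W : \bm F_m^{-\mathrm T} = \epsilon_{ijk}c_j(\bm F_m)_{kl}(\bm F_m^{-1})_{li} = \epsilon_{ijk}c_j\delta_{ki} = 0$, which annihilates both the pressure and the grad-div stabilization terms. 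Finally, the body-force and traction terms reassemble, via the same triple-product identity, into the moments on the right of \eqref{em:prop-J}, and the arbitrariness of $\bm c$ closes the proof.

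The hard part will be the symmetry bookkeeping, which is where the content of the proposition really lives. The two decisive observations are that the algorithmic stress contributes nothing precisely because $\bm F_m^{\mathrm T}\nabla_{\bm X}\bm W$ is skew-symmetric while $\bm S_{\mathrm{ich}~\mathrm{alg}}$ is assumed symmetric, and — most relevant to this paper's novelty — that the newly introduced grad-div stabilization does not spoil conservation. The latter reduces to the clean identity that $\nabla_{\bm X}\bm W : \bm F_m^{-\mathrm T}$ is a contraction of the totally antisymmetric Levi-Civita symbol against $\delta_{ki}$ and is therefore exactly zero, independent of $\gamma$. I expect the telescoping identity for the inertial term to be the other delicate point, since it is exactly the midpoint kinematic relation \eqref{eq:em-kinematic} that forces the spurious $(\bm\varphi_{n+1}-\bm\varphi_n)\times\bm V_m$ term to cancel; any other evaluation of $\bm\varphi$ would leave a residual and break the match with $\bm J_{n+1}-\bm J_n$.

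I emphasize that the symmetry requirement on $\bm S_{\mathrm{ich}~\mathrm{alg}}$ enters only for the angular balance; the linear balance \eqref{em:prop-L} needs no structural assumption on the algorithmic stress and holds for any $\gamma\ge 0$, since the constant test function kills every gradient-bearing term before symmetry is ever invoked.
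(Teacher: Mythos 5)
Your proposal is correct and follows essentially the same route as the paper's proof: a constant test function for the linear balance, the test function $\bm \xi \times \bm \varphi_{m}$ for the angular balance, the skew-symmetry of $\bm{F}^{\mathrm{T}}_{m}\nabla_{\bm X}(\bm \xi \times \bm \varphi_{m})$ against the symmetric algorithmic stress, the identity $\nabla_{\bm X}(\bm \xi \times \bm \varphi_{m}) : \bm{F}^{\mathrm{-T}}_{m} = 0$ to kill the pressure and grad-div terms, and the kinematic relation \eqref{eq:em-kinematic} to eliminate the residual $(\bm\varphi_{n+1}-\bm\varphi_n)\times\bm V_m$ term. The only differences are cosmetic (index notation in place of the skew tensor $\bm W_{\bm\xi}$, and applying the telescoping identity before rather than after testing), so no further comparison is needed.
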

\begin{proof}
Choosing $\bm{W = \bm \xi } \in \mathcal V_{\bm V}$ for an arbitrary constant vector $\bm \xi \in \mathbb R^3$ in \eqref{eq:em-momentum}, one has
\begin{align*}
\bm \xi \cdot \left( \frac{1}{\Delta t_n} \left( \bm{L}_{n+1} - \bm{L}_{n} \right) - \int_{\Omega_{\bm X}} \rho_0 \bm{B}_{m} d\Omega_{\bm X} - \int_{\Gamma_{\bm X}^{H}} \bm{H}_{m} d\Gamma_{\bm X} \right) = 0.
\end{align*}
Due to the arbitrariness of $\bm \xi$, one may conclude that \eqref{em:prop-L} holds. It can be also shown that $\bm \xi \times \bm \varphi_{m}$ is an admissible test function for an arbitrary constant vector $\bm \xi$. Choosing $\bm{W} = \bm \xi \times \bm \varphi_{m}$ in \eqref{eq:em-momentum} leads to the following,
\begin{align}
\label{eq:em-prop-3}
0 =& \int_{\Omega_{\bm X}} \left( \bm \xi \times \bm \varphi_{m} \right) \cdot \rho_0 \frac{\bm{V}_{n+1} - \bm{V}_{n}}{\Delta t_n} + \left( \bm{F}^{\mathrm{T}}_{m}\nabla_{\bm X} \left( \bm \xi \times \bm \varphi_{m} \right) \right) : \bm{S}_{\mathrm{ich}~\mathrm{alg}} d\Omega_{\bm X} \nonumber \\
& - \int_{\Omega_{\bm X}} J_{m} P_{m} \nabla_{\bm X} \left( \bm \xi \times \bm \varphi_{m} \right) : \bm{F}^{\mathrm{-T}}_{m} 
- \left( \bm \xi \times \bm \varphi_{m} \right) \cdot \rho_0 \bm{B}_{m} d\Omega_{\bm X} - \int_{\Gamma_{\bm X}^H} \left( \bm \xi \times \bm \varphi_{m} \right) \cdot \bm{H}_{m} d\Gamma_{\bm X} \nonumber \\
& + \int_{\Omega_{\bm X}} \gamma J_{m}\left( \nabla_{\bm X} \left( \bm \xi \times \bm \varphi_{m} \right) : \bm{F}^{\mathrm{-T}}_{m} \right) \left( \nabla_{\bm X} \bm{V}_{m} : \bm{F}^{\mathrm{-T}}_{m} \right) d\Omega_{\bm X}.
\end{align}
It is known that there exists a skew tensor $\bm{W}_{\bm \xi}$ such that $\bm \xi \times \bm a = \bm{W}_{\bm \xi} \bm a$ for any vector $\bm a$. With $\bm W_{\bm \xi}$, the second term in the above can be reorganized as
\begin{align*}
\left( \bm{F}^{\mathrm{T}}_{m}\nabla_{\bm X} \left( \bm \xi \times \bm \varphi_{m} \right) \right) : \bm{S}_{\mathrm{ich}~\mathrm{alg}} &= \left( \nabla_{\bm X}\left( \bm{W}_{\bm \xi} \bm \varphi_{m} \right) \right) : \left( \bm{F}_{m} \bm{S}_{\mathrm{ich}~\mathrm{alg}} \right) = \bm{W}_{\bm \xi} : \left( \bm{F}_{m} \bm{S}_{\mathrm{ich}~\mathrm{alg}} \bm{F}^{\mathrm{T}}_{m} \right) = 0.
\end{align*}
The last equality of the above is due to the symmetry of $\bm S_{\mathrm{ich}~\mathrm{alg}}$. Invoking an analogous argument, we have
\begin{align*}
\nabla_{\bm X} \left( \bm \xi \times \bm \varphi_{m} \right) : \bm{F}^{\mathrm{-T}}_{m} &= \bm{W}_{\bm \xi} : \left( \bm{F}^{\mathrm{-T}}_{m}  \bm{F}_{m}^{\mathrm{T}} \right) = \bm{W}_{\bm \xi} : \bm{I} = 0.
\end{align*}
Therefore, the third and last terms in \eqref{eq:em-prop-3} vanish. Consequently, the relation \eqref{eq:em-prop-3} can be simplified as
\begin{align}
\label{eq:em-prop-4}
\frac{1}{\Delta t_n} \int_{\Omega_{\bm X}}  \rho_0 \bm \varphi_{m} \times \left( \bm V_{n+1} - \bm{V}_{n} \right) d\Omega_{\bm X} = \int_{\Omega_{\bm X}} \rho_0 \bm \varphi_{m} \times \bm{B}_{m} d\Omega_{\bm X} + \int_{\Gamma_{\bm X}^{H}} \bm \varphi_{m} \times \bm{H}_{m} d\Gamma_{\bm X}.
\end{align}
Performing an inner product of the equation \eqref{eq:em-kinematic} with $\bm \xi \times \rho_0 \bm{V}_{m}$ leads to
\begin{align}
\label{eq:em-prop-5}
\frac{1}{\Delta t_n} \int_{\Omega_{\bm X}} \rho_0 \bm{V}_{m} \times \left( \bm \varphi_{n+1} - \bm \varphi_n \right)d\Omega_{\bm X} = 0.
\end{align}
Adding \eqref{eq:em-prop-5} with \eqref{eq:em-prop-4} gives \eqref{em:prop-J}, which completes the proof.
\end{proof}

Third, the term $\bm S_{\mathrm{ich}~\mathrm{alg}}$ needs to be a second-order approximation of  $\bm S_{\mathrm{ich}}(\bm F^{\mathrm{T}}_{m} \bm F_{m} )$, which is needed to maintain the second-order temporal accuracy of the scheme \eqref{eq:em-kinematic}-\eqref{eq:algorithmic Stress tensor}. It is worth pointing out that the stress $\bm S_{\mathrm{ich}~m}$ is purposely evaluated at $\bm C_{m}$ rather than $\bm F^{\mathrm{T}}_{m} \bm F_{m}$ to preserve the relative equilibria of the continuum system \cite{Armero2001a}.

Algorithmic stresses that satisfy the three aforementioned attributes are candidates for the construction of an energy-momentum consistent scheme. As was analyzed in \cite{Romero2012}, the algorithmic stress can be designed through a constrained optimization problem, minimizing the difference between the algorithmic stress and $\bm S_{\mathrm{ich}~m}$ for symmetric tensors  that satisfy the directionality property. The result can be represented as
\begin{align}
\label{eq:general-formula-Romero}
\bm S_{\mathrm{ich}~\mathrm{alg}} = \bm S_{\mathrm{ich}~m} + \frac{ \Delta  G_{\mathrm{ich}} - \bm{S}_{\mathrm{ich}~m} : \bm{Z}_{n} }{\bm{Z}_{n}: \left(\mathbb M^{-1} \bm Z_n \right)} \mathbb M^{-1} \bm Z_n,
\end{align}
with $\mathbb M$ being a fourth-order positive-definite tensor \cite[Eqn.~(29)]{Romero2012} and $\Delta G_{\mathrm{ich}}:= G_{\mathrm{ich}}(\tilde{\bm{C}}_{n+1}) - G_{\mathrm{ich}}(\tilde{\bm{C}}_{n})$.

\paragraph{The discrete gradient}
We first present a choice for the algorithmic stress as
\begin{align}
\label{eq:The Gonzalez discrete gradient}
\bm S^{\mathrm{gon}}_{\mathrm{ich}~\mathrm{alg}} = \bm S_{\mathrm{ich}~m}+\bm S_{\mathrm{ich}~\mathrm{enh}}^{\mathrm{gon}}, \quad \mbox{ with } \quad \bm S_{\mathrm{ich}~\mathrm{enh}}^{\mathrm{gon}} := \frac{\Delta G_{\mathrm{ich}} - \bm{S}_{\mathrm{ich}~m} : \bm{Z}_{n} }{\|\bm{Z}_{n}\|} \frac{\bm{Z}_{n}}{\|\bm{Z}_{n}\|}.
\end{align}
This formula was originally proposed by Gonzalez in 1996 \cite{Gonzalez1996}, and it can also be obtained by taking $\mathbb M$ as the fourth-order identity tensor in \eqref{eq:general-formula-Romero}. 


\paragraph{The scaled mid-point formula}
An alternative algorithmic stress was originally suggested in \cite[p.~252]{Chorin1978} and can be stated as
\begin{align}
\label{eq:scaled-mid-point-formula}
\bm S^{\mathrm{sca}}_{\mathrm{ich}~\mathrm{alg}} = \bm S_{\mathrm{ich}~m}+\bm S_{\mathrm{ich}~\mathrm{enh}}^{\mathrm{sca}} = \frac{\Delta G_{\mathrm{ich}}}{\bm S_{\mathrm{ich~m}}:\bm Z_n} \bm S_{\mathrm{ich}~m}, \quad \bm S_{\mathrm{ich}~\mathrm{enh}}^{\mathrm{sca}} := \frac{ \Delta G_{\mathrm{ich}} - \bm{S}_{\mathrm{ich}~m} : \bm{Z}_{n} }{\bm{S}_{\mathrm{ich~m}}:\bm{Z}_n} \bm{S}_{\mathrm{ich~m}}.
\end{align}
It was subsequently used in many-body dynamics \cite{Gonzalez1996} and elastodynamics \cite{Bui2007} as a conserving integrator. However, this candidate is rather controversial. On one side, it has been under criticism because its multiplicative modification affects the whole stress homogeneously \cite{Reich1996}. Romero also demonstrated that its corresponding tensor norm in the optimization problem cannot be equivalent to the Frobenius norm \cite{Romero2012}. On the other side, a recent study on many-body dynamics \cite{Orden2021}  suggests that the scaled mid-point formula \eqref{eq:scaled-mid-point-formula} is the unique one that enjoys the G-invariant property \cite{Gonzalez1996b} and is demonstrated to perform rather well. For isotropic materials, the stress $\bm S^{\mathrm{sca}}_{\mathrm{ich}~\mathrm{alg}}$ is coaxial with the deformation tensor $\bm C_m$. Coaxial stresses are featured by several interesting and beneficial properties \cite[Appendix~A]{SouzaNeto2008}. In this regard, a different algorithmic stress that maintains the coaxial property can be obtained by choosing $\mathbb M$ in \eqref{eq:general-formula-Romero} such that $\mathbb M^{-1} \bm Z_n = \bm C_{m}$, i.e.,
\begin{align}
\label{eq:a new coaxial gradient}
\bm S^{\mathrm{coa}}_{\mathrm{ich}~\mathrm{alg}} = \bm S_{\mathrm{ich}~m}+\bm S_{\mathrm{ich}~\mathrm{enh}}^{\mathrm{coa}}, \quad  \bm S_{\mathrm{ich}~\mathrm{enh}}^{\mathrm{coa}} := \frac{ \Delta G_{\mathrm{ich}} - \bm{S}_{\mathrm{ich}~m} : \bm{Z}_{n} }{\bm{C}_m:\bm{Z}_n} \bm{C}_{m}.
\end{align}

We mention that the above formulas for the algorithmic stress all adopt a quotient form. In practice, the denominator needs to be monitored. If it gets too small, one needs to turn off the stress enhancement to avoid numerical singularities. In our implementation, the switching criterion is given by a prescribed tolerance $\mathrm{tol}_{\mathrm{B}}$, and a reasonable choice of the value of $\mathrm{tol}_{\mathrm{B}}$ is $10^{-10}$ for the Gonzalez discrete gradient formula according to numerical tests. For the scaled mid-point formula \eqref{eq:scaled-mid-point-formula} and its extension \eqref{eq:a new coaxial gradient}, the denominators are more likely to get below the tolerance. This can be better explained using the formula \eqref{eq:a new coaxial gradient}, whose denominator is $\bm{C}_m:\bm{Z}_n = (\|\bm C_{n+1}\|^2 - \|\bm C_n\|^2) / 4$. In comparison with $\|\bm Z_n\|^2$ in the discrete gradient \eqref{eq:The Gonzalez discrete gradient}, the expression $\bm C_m : \bm Z_n$ is not a metric for $\bm Z_n$, meaning $\bm C_m : \bm Z_n$ can be zero while $\bm C_{n+1} \neq \bm C_n$. Detailed numerical evidence is provided in Section \ref{sec:numerical-robustness}. The lack of the metric nature renders the latter two formulas non-robust and are thus not recommended.

\subsection{A segregated predictor multi-corrector algorithm}
The Newton-Raphson method is invoked to deal with the nonlinear algebraic equations in each time step. At the time step $t_{n+1}$, the solution vector $\bm Y_{n+1}$ is solved by means of a predictor multi-corrector algorithm. We denote  $\bm{Y}_{n+1, (l)} := \{ \bm U_{n+1, (l)}, P_{n+1, (l)}, \bm V_{n+1, (l)} \}^{\mathrm{T}}$ as the solution vector at the Newton-Raphson iteration step $l = 0, ..., l_{\mathrm{max}}$. At the $l$-th iteration step, the residual vectors corresponding to \eqref{eq:em-kinematic}-\eqref{eq:em-momentum} evaluated with $\bm Y_{n+1, (l)}$ are denoted as
\begin{align*}
\bm{ \mathrm{R} }_{(l)} := \left \{ \bm{ \mathrm{R} }_{(l)}^{\mathrm{k}}, \bm{ \mathrm{R} }_{(l)}^{\mathrm{p}}, \bm{ \mathrm{R} }_{(l)}^{\mathrm{m}} \right \}^{\mathrm{T}}.
\end{align*}
The superscripts $\mathrm{k}$, $\mathrm{p}$, and $\mathrm{m}$ denote the kinematic, mass, and linear momentum discrete equations, respectively. The consistent tangent matrix can be represented as
\begin{align*}
\bm{ \mathrm{K} }_{(l)} = 
\begin{bmatrix}
\bm{ \mathrm{K} }^{\mathrm{k}}_{(l),\bm U} & \bm{\mathrm{O}} & \bm{ \mathrm{K} }^{\mathrm{k}}_{(l),\bm V} \\
\bm{ \mathrm{K} }^{\mathrm{p}}_{(l),\bm U} & \bm{\mathrm{O}} & \bm{ \mathrm{K} }^{\mathrm{p}}_{(l),\bm V} \\
\bm{ \mathrm{K} }^{\mathrm{m}}_{(l),\bm U} & \bm{ \mathrm{K} }^{\mathrm{m}}_{(l),\bm P} & \bm{ \mathrm{K} }^{\mathrm{m}}_{(l),\bm V}  
\end{bmatrix},
\end{align*}
wherein
\begin{align*}
\bm{ \mathrm{K} }^{\mathrm{k}}_{(l),\bm U} := \frac{ \partial \bm{\mathrm{R}}_{(l)}^{\mathrm{k}} }{ \partial \bm U_{n+1} } =  \frac{1}{\Delta t_n} \mathbf{I}, \qquad
\bm{ \mathrm{K} }^{\mathrm{k}}_{(l),\bm V} := \frac{ \partial \bm{\mathrm{R}}_{(l)}^{\mathrm{k}} }{ \partial \bm V_{n+1} } = -\frac{1}{2} \mathbf{I},
\end{align*}
$\mathbf{I}$ is the identity matrix, and $\bm{\mathrm{O}}$ is the zero matrix. We mention that the above consistent tangent matrix has a $3\times 3$ block structure. Due to the particular forms of $\bm{ \mathrm{K} }^{\mathrm{k}}_{(l),\bm U}$ and $\bm{ \mathrm{K} }^{\mathrm{k}}_{(l),\bm V}$, the Newton-Raphson solution procedure can be consistently  reduced to a two-stage algorithm \cite{Liu2018}. In the first stage, the increments, $ \Delta \bm V_{n+1,(l)}$ and $ \Delta P_{n+1,(l)}$, are obtained by solving the linear system,
\begin{align}
\label{eq:newton-raphson-2x2-matrix-problem}
\begin{bmatrix}
\bm{ \mathrm{K} }^{\mathrm{m}}_{(l),\bm V} + \frac{\Delta t_n}{2} \bm{ \mathrm{K} }^{\mathrm{m}}_{(l),\bm U} & \bm{ \mathrm{K} }^{\mathrm{m}}_{(l),\bm P} \\
\bm{ \mathrm{K} }^{\mathrm{p}}_{(l),\bm V} + \frac{\Delta t_n}{2} \bm{ \mathrm{K} }^{\mathrm{p}}_{(l),\bm U} & \bm{\mathrm{O}}
\end{bmatrix}
\begin{bmatrix}
\Delta \bm V_{n+1,(l)} \\
\Delta P_{n+1,(l)}
\end{bmatrix}
= -
\begin{bmatrix}
\bm{ \mathrm{R} }_{(l)}^{\mathrm{m}} - \Delta t_n \bm{ \mathrm{K} }^{\mathrm{m}}_{(l),\bm U} \bm{\mathrm{R}}_{(l)}^{\mathrm{k}} \\
\bm{ \mathrm{R} }_{(l)}^{\mathrm{p}} - \Delta t_n \bm{ \mathrm{K} }^{\mathrm{p}}_{(l),\bm U} \bm{\mathrm{R}}_{(l)}^{\mathrm{k}} 
\end{bmatrix}.
\end{align}
In the second stage, the increment $ \Delta \bm U_{n+1,(l)}$ is obtained by the updating formula
\begin{align}
\label{eq:Delta U}
\Delta \bm U_{n+1,(l)} = \Delta t_n \left( \frac{1}{2} \Delta \bm V_{n+1,(l)} - \bm{\mathrm{R}}_{(l)}^{\mathrm{k}} \right). 
\end{align}
To simplify the subsequent discussion, we introduce the following notations
\begin{align}
\label{eq:tangent matrices}
\bm{\mathrm{A}}_{(l)} := \bm{ \mathrm{K} }^{\mathrm{m}}_{(l),\bm V} + \frac{\Delta t_n}{2} \bm{ \mathrm{K} }^{\mathrm{m}}_{(l),\bm U}, \qquad \bm{\mathrm{B}}_{(l)} := \bm{ \mathrm{K} }^{\mathrm{m}}_{(l),\bm P}, \qquad \bm{\mathrm{C}}_{(l)} := \bm{ \mathrm{K} }^{\mathrm{p}}_{(l),\bm V} + \frac{\Delta t_n}{2} \bm{ \mathrm{K} }^{\mathrm{p}}_{(l),\bm U}.
\end{align}
It has been shown that $\bm{\mathrm{R}}_{(l)}^{\mathrm{k}} = \bm 0$ for $l \geq 2$ \cite[Appendix~B]{Liu2018}, and we set $\bm{\mathrm{R}}_{(l)}^{\mathrm{k}} = \bm 0$ on the right-hand side of \eqref{eq:newton-raphson-2x2-matrix-problem} throughout the Newton-Raphson iteration to simplify the implementation. The procedure for solving the above nonlinear algebraic equations in a time sub-interval $\mathcal I_n$ can be summarized as the following predictor multi-corrector algorithm. Its stopping criteria involve the maximum number of iterations $l_{\mathrm{max}}$, the relative tolerance $\mathrm{tol}_{\mathrm{R}}$,  and the absolute tolerance $\mathrm{tol}_{\mathrm{A}}$.

\noindent \textbf{Predictor stage:}
\begin{enumerate}
\item Set $\bm Y_{n+1,(0)} = \bm Y_{n}$.
\end{enumerate}

\noindent \textbf{Multi-corrector stage:} Repeat the following steps for $l = 1, ..., l_{\mathrm{max}}$.
\begin{enumerate}
\item Assemble the residual vectors $\bm{\mathrm{R}}_{(l)}^{m}$ and $\bm{\mathrm{R}}_{(l)}^{\mathrm{p}}$ using $\bm Y_{n+1,(l)}$ and $\bm Y_{n, (l)}$.

\item Let $\Vert \bm{\mathrm{R}}_{(l)} \Vert _{\mathfrak l^2}$ denotes the $\mathfrak l^2 \operatorname{-norm}$ of the residual vector. If one of the criteria,
\begin{align*}
\frac{\Vert \bm{\mathrm{R}}_{(l)} \Vert _{\mathfrak l^2} }{ \Vert \bm{\mathrm{R}}_{(0)} \Vert _{\mathfrak l^2} } \leq \mathrm{tol}_{\mathrm{R}} \quad \mbox{ and } \quad \Vert \bm{\mathrm{R}}_{(l)} \Vert _{\mathfrak l^2} \leq \mathrm{tol}_{\mathrm{A}},
\end{align*}
is satisfied, set the solution vector at the time step $t_{n+1}$ as $\bm Y_{n+1} = \bm Y_{n+1,(l-1)}$, and exit the multi-corrector stage; otherwise continue to step 3.

\item Assemble the tangent matrices $\bm{\mathrm{A}}_{(l)}$, $\bm{\mathrm{B}}_{(l)}$, and $\bm{\mathrm{C}}_{(l)}$ according to \eqref{eq:tangent matrices}.

\item Solve the linear system
\begin{align}
\label{eq:predictor-corrector-linear-system}
\begin{bmatrix}
\bm{\mathrm{A}}_{(l)} & \bm{\mathrm{B}}_{(l)} \\
\bm{\mathrm{C}}_{(l)} & \bm{\mathrm{O}}
\end{bmatrix}
\begin{bmatrix}
\Delta \bm V_{n+1,(l)} \\
\Delta P_{n+1, (l)}
\end{bmatrix}
= -
\begin{bmatrix}
\bm{\mathrm{R}}_{(l)}^{\mathrm{m}} \\
\bm{\mathrm{R}}_{(l)}^{\mathrm{p}}
\end{bmatrix}
\end{align}
for $\Delta \bm V_{n+1,(l)}$ and $\Delta P_{n+1,(l)}$.

\item Obtain $\Delta \bm U_{n+1,(l)}$ according to the relation \eqref{eq:Delta U}.

\item Update the solution vector as $\bm Y_{n+1,(l)} = \bm Y_{n+1,(l)} + \Delta \bm Y_{n+1,(l)}$.
\end{enumerate}

\begin{remark}
In the scenario of structure-preserving time integration, it is critical to solve the nonlinear problem accurately in each time step. Correspondingly, it is necessary to solve the linear problem \eqref{eq:predictor-corrector-linear-system} accurately. Otherwise the Newton-Raphson iteration may stagnate due to inaccurate solutions of the incremental. In this work, we invoke the iterative solver preconditioned by the nested block preconditioner \cite{Liu2019} as well as sparse direct solver implemented in \texttt{MUMPS} \cite{Amestoy2001} to solve the linear system, both of which can effectively solve the linear system.
\end{remark}

\section{Numerical examples}
\label{sec:numerical_examples}
In our numerical investigation, unless otherwise specified, the discrete pressure function space is generated by $k$-refinement to achieve the highest possible continuity, and the degree elevation is adopted with $\mathsf{a} = 1$ and $\mathsf{b}=0$ to construct the velocity function space; the Gonzalez discrete gradient is used to construct the algorithmic stress; we use $\mathsf p + \mathsf a + 2$ Gaussian quadrature points in each direction; we use $\mathrm{tol}_\mathrm{R} = 10^{-10}$, $\mathrm{tol}_\mathrm{A} = 10^{-10}$, $l_{\mathrm{max}} = 10$ as the stopping criteria in the predictor multi-corrector algorithm; the meter-kilogram-second system of units is used.

\subsection{Numerical robustness}
\label{sec:numerical-robustness}
In this first example, we assess the robustness of the algorithmic stress. The assessing procedure can be summarized briefly. A fixed initial deformation state characterized by $\bm F_1$ is introduced, and the deformed state is given by $\bm F_2 = \bm F_1 + \xi \bm D$, with $\bm D$ being a two-point tensor representing a prescribed deformation state. Varying the parameter $\xi \in \mathbb R_{+}$, one may control the difference between the two deformation states. We are concerned with cases when $\xi$ approaches $0$, mimicking the behavior when the time step size gets small or when the body reaches a steady state. The stress enhancement behaves as a $0/0$-type indeterminate form in the limit, and their numerical evaluation will become numerically unstable eventually. In practice, if the denominator is smaller than $\mathrm{tol}_{\mathrm{B}}$, the stress enhancement will be turned off. One goal of this example is to determine the value of $\mathrm{tol}_{\mathrm{B}}$ by examining the quotient formula in different scenarios.

For the stretch-based material models, the spectral decomposition plays a role in the evaluation of all constitutive relations. The conventional approaches adopt Cardano's formula with a perturbation technique to handle the case of nearly identical eigenvalues \cite{Miehe1993,Simo1992c}. It was noticed that the perturbed principal stretches lead to the numerical blow-up when $\|\bm C_2 - \bm C_1\|$ is of the order $10^{-2}$ \cite{Mohr2008}. In order words, the perturbed technique leads to an early occurrence of numerical instability, which will inevitably destroy the conservation properties. A special technique was developed to alleviate this issue in the context of conserving integrators \cite{Mohr2008}. In this study, we adopt the spectral decomposition algorithm proposed in \cite{Scherzinger2008}. Its accuracy is comparable with that of LAPACK \cite{Harari2022,Scherzinger2008}, and numerical artifacts like the perturbation technique is unneeded in this algorithm.

\subsubsection{Robustness of the quotient formula and the spectral decomposition algorithm}
We consider the following three-dimension deformation states inspired by the two-dimensional benchmark used in \cite{Mohr2008}. The initial deformation gradient $\bm{F}_1$ is given by
\begin{align*}
\bm{F}_1 = \left[
\begin{array}{ccc}
1.5 & 0.0 & 0.0 \\
0.1 & 0.8 & 0.0 \\
0.0 & 0.0 & 1.0
\end{array}
\right],
\end{align*}
and we consider three deformation types as follows,
\begin{align*}
\bm{D}_{\mathrm{comp}} = \left[
\begin{array}{ccc}
0.0 & 0.0 & 0.0 \\
0.0 & -1.0 & 0.0 \\
0.0 & 0.0 & 1.0
\end{array}
\right], \quad
\bm{D}_{\mathrm{shear}} = \left[
\begin{array}{ccc}
0.0 & 1.0 & 0.0 \\
0.0 & 0.0 & 0.0 \\
0.0 & 0.0 & 1.0
\end{array}
\right], \quad
\bm{D}_{\mathrm{mix}} = \left[
\begin{array}{ccc}
0.0 & 1.0 & 0.0 \\
0.0 & -1.0 & 0.0 \\
0.0 & 0.0 & 1.0
\end{array}
\right],
\end{align*}
representing the compression, simple shear, and mixed deformation states. As a special case of the Ogden mode, the neo-Hookean model is utilized as it can be represented in terms of principal invariants as well. The shear modulus is taken to be $5000$. In Figure \ref{fig:numerical_limit_behavior}, the norm of $\bm S_{\mathrm{ich~enh}}^{\mathrm{gon}}$ is plotted against $\|\bm C_2 - \bm C_1\|$ using both the invariant- and stretch-based formulation. The numerical behaviors for the three imposed deformation states are similar. We know that the analytic value of $\|\bm S_{\mathrm{ich~enh}}^{\mathrm{gon}}\|$ goes to zero as $\|\bm C_2 - \bm C_1\|$ approaches zero \cite{Mohr2008}. The numerical value of $\|\bm S_{\mathrm{ich~enh}}^{\mathrm{gon}}\|$ monotonically decreases when $10^{-4}<\|\bm C_2 - \bm C_1\|$; for $10^{-5}<\|\bm C_2 - \bm C_1\|<10^{-4}$, the floating-point calculations start to result in oscillations; when the value of $\|\bm C_2 - \bm C_1\|$ further decreases, the magnitude of the oscillation amplifies, and the value of $\|\bm S_{\mathrm{ich~enh}}^{\mathrm{gon}}\|$ ceases to approach its analytic limit. The oscillation is due to the error in floating-point calculations and indicates the value of the stress enhancement is getting non-reliable. Therefore, we set $\mathrm{tol}_{\mathrm B}$ to be $10^{-10}$ in our calculations. This means that the algorithmic stress $\bm{S}_{\mathrm{ich~alg}}$ will be switched to $\bm{S}_m$ once the condition $\bm{Z}_n :\bm{Z}_n \leq 10^{-10}$ is detected.

We may observe from Figure \ref{fig:numerical_limit_behavior} that the invariant-based model behaves similarly to the stretch-based model. This means that the spectral decomposition algorithm \cite{Scherzinger2008} is accurate enough that it does not incur an early occurrence of numerical instabilities. In this regard, the spectral decomposition algorithm \cite{Scherzinger2008} can be safely used without considering additional strategies \cite{Mohr2008}.

\begin{figure}
\begin{center}
\begin{tabular}{ccc}
\includegraphics[angle=0, trim=60 90 160 120, clip=true, scale = 0.14]{./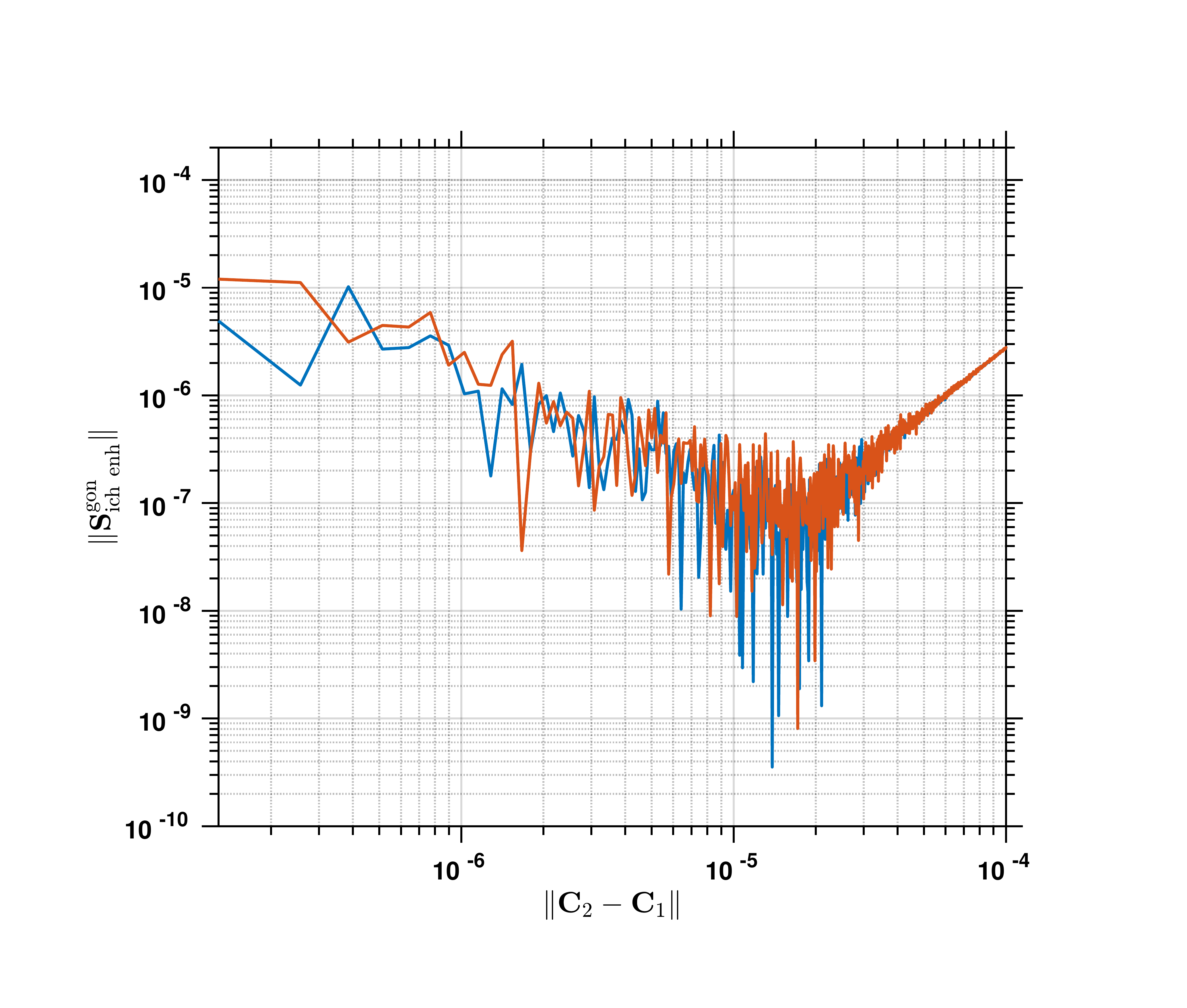} &
\includegraphics[angle=0, trim=60 90 160 120, clip=true, scale = 0.14]{./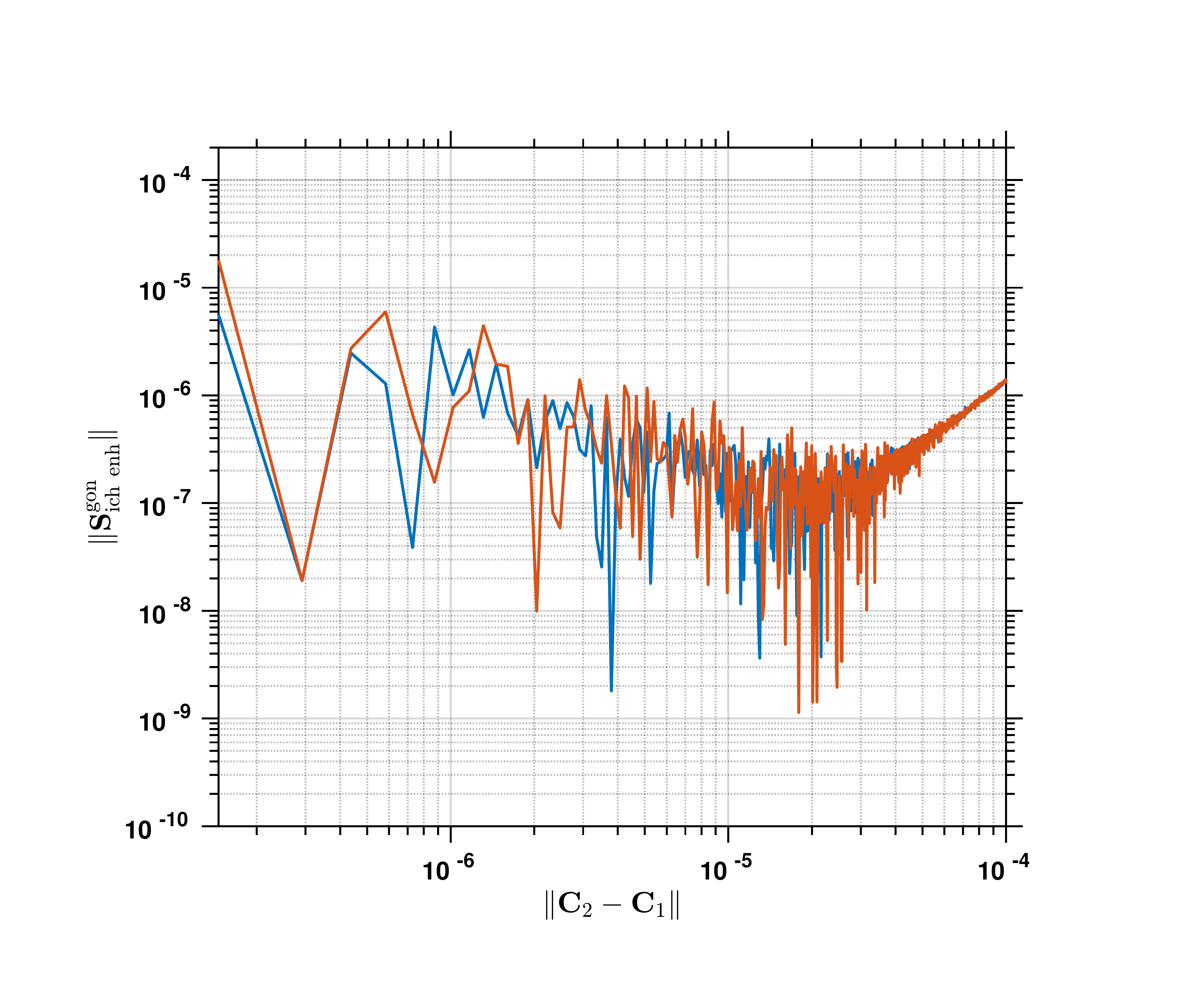} & 
\includegraphics[angle=0, trim=60 90 160 120, clip=true, scale = 0.14]{./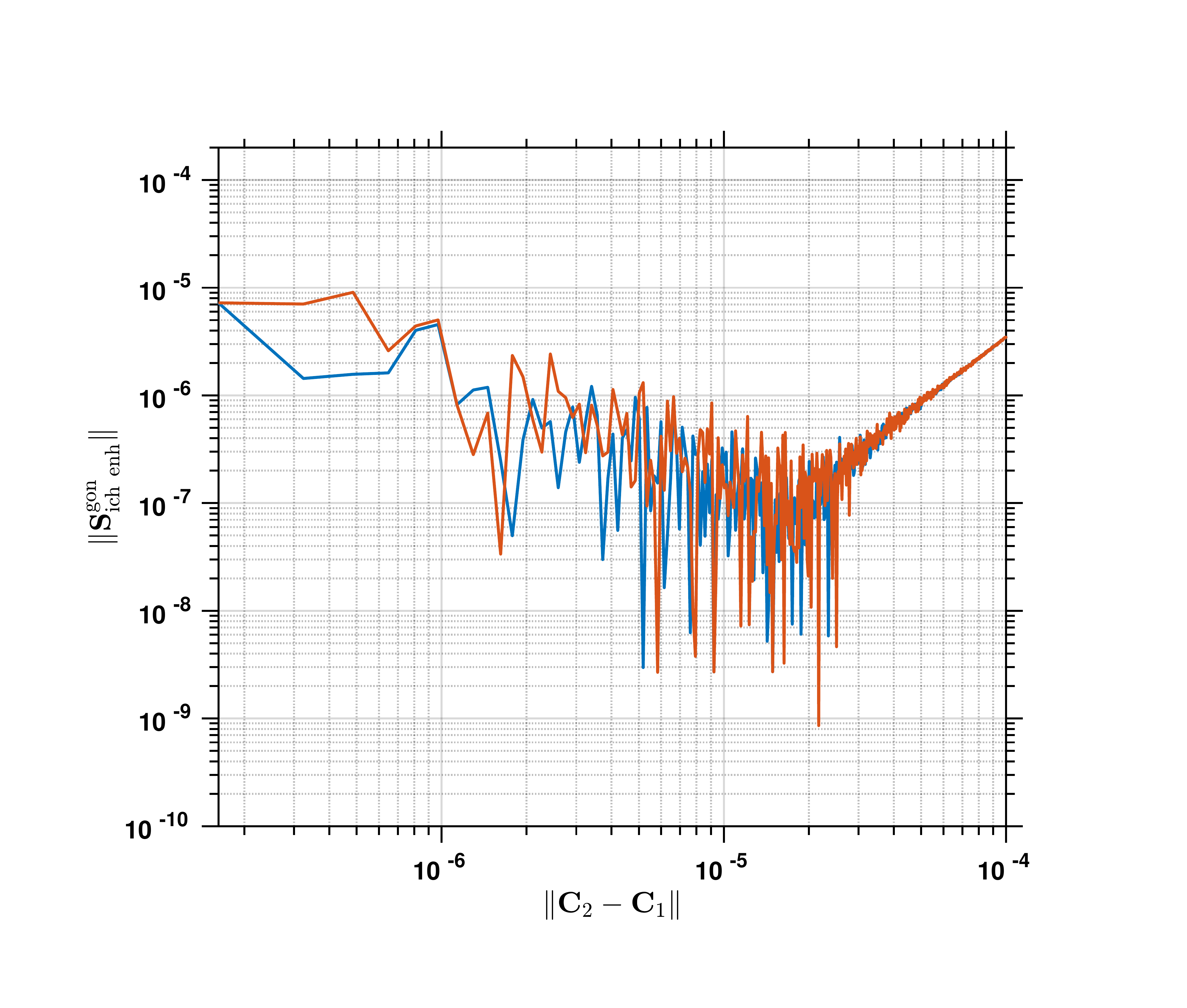} \\
(a) & (b) & (c) \\
\multicolumn{3}{c}{ \includegraphics[angle=0, trim=160 190 160 910, clip=true, scale = 0.35]{./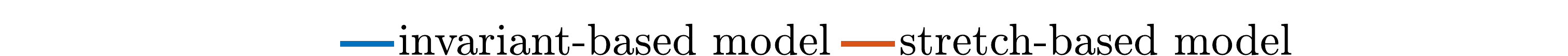} }
\end{tabular}
\caption{Numerical limit behavior of the norm of the stress enhancement $\bm S_{\mathrm{ich~enh}}^{\mathrm{gon}}$ with respect to $\| \bm{C}_2-\bm{C}_1 \|$ for (a) compression, (b) simple shear, and (c) mixed deformation.}
\label{fig:numerical_limit_behavior}
\end{center}
\end{figure}

\begin{figure}
\begin{center}
\begin{tabular}{cc}
\includegraphics[angle=0, trim=50 200 50 100, clip=true, scale = 0.20]{./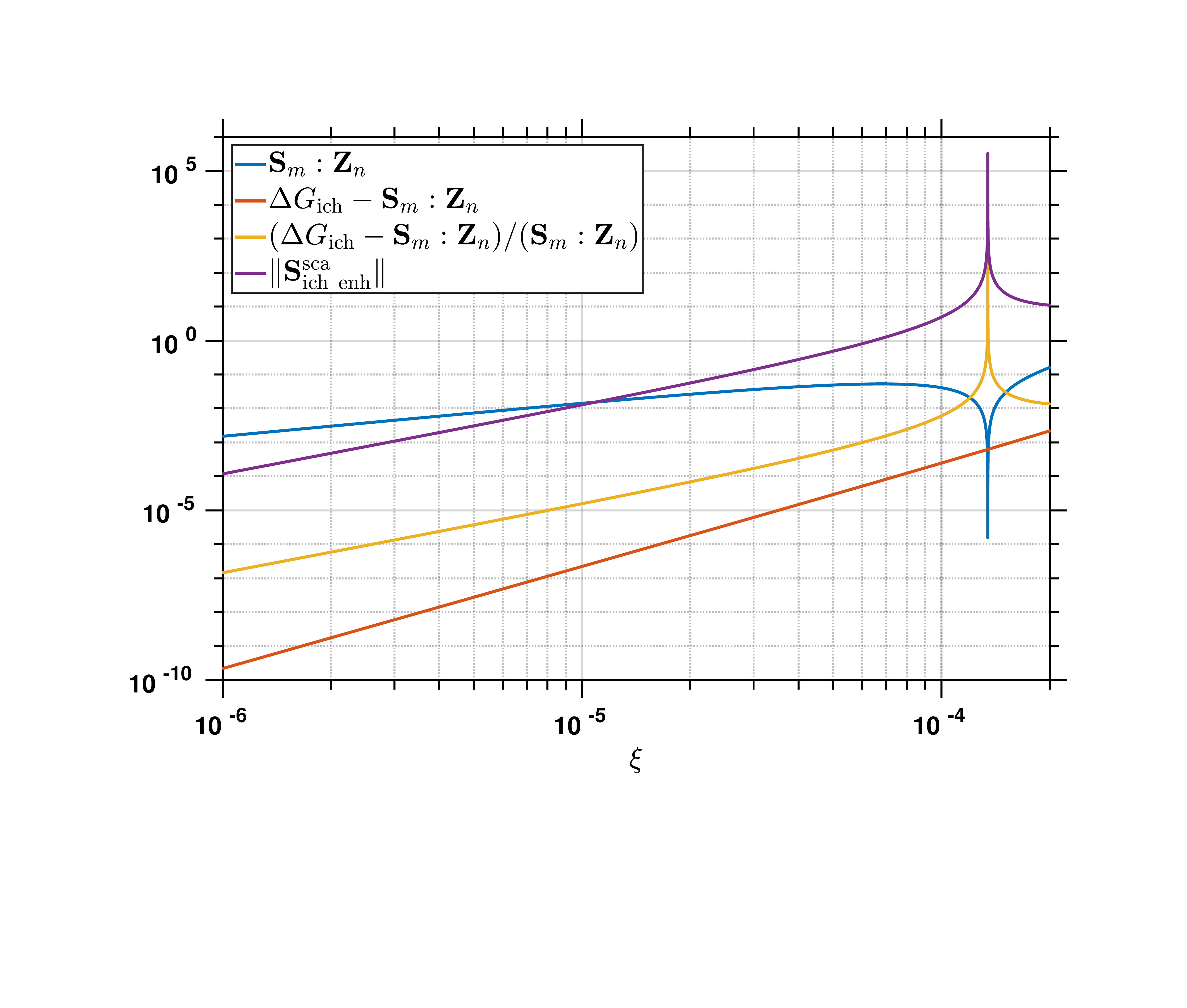} &
\includegraphics[angle=0, trim=50 200 50 100, clip=true, scale = 0.20]{./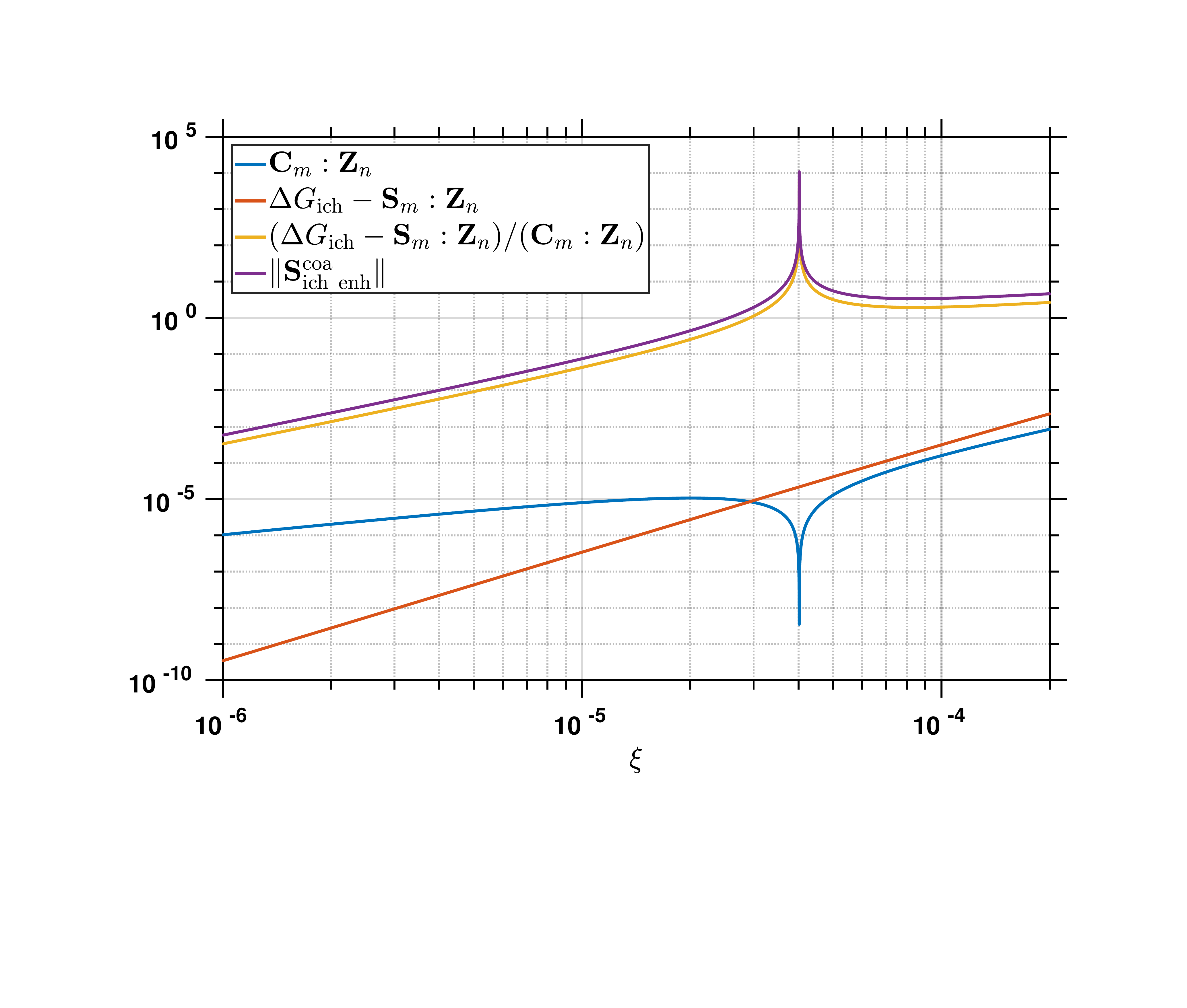} \\
(a) & (b)
\end{tabular}
\caption{Numerical behavior of the enhanced stress for (a) the scaled mid-point gradient \eqref{eq:scaled-mid-point-formula} and (b) the discrete gradient \eqref{eq:a new coaxial gradient}.}
\label{fig:potential_problems_sca_coa}
\end{center}
\end{figure}

\subsubsection{Robustness of the scaled mid-point formula}
The scaled mid-point formula \eqref{eq:scaled-mid-point-formula} and its extension \eqref{eq:a new coaxial gradient} were analyzed previously in Section \ref{sec:temporal_discretization}. A major concern is that the denominators in their quotient formulas are not of metric nature, which tends to incur numerical instability. Here we provide two cases in which the two discrete gradients behave poorly. The two cases are obtained from our practical calculations and are thus non-trivial compared with the previous three cases. The initial deformation state $\bm{F}_{1}$ and the pre-defined deformation state $\bm{D}$ are set as
\begin{align*}
\bm{F}_1 = \left[
\begin{array}{ccc}
0.996 & 0.001 & 0.185 \\
0.0 & 1.0 & 0.002 \\
-0.069 & 0.0 & 1.008
\end{array}
\right], \quad
\bm{D} = \left[
\begin{array}{ccc}
-20.0 & 0.0 & 170.0 \\
-10.0 & -20.0 & 10.0 \\
-180.0 & 0.0 & 20.0
\end{array}
\right]
\end{align*}
for the scaled mid-point formula \eqref{eq:scaled-mid-point-formula}, and
\begin{align*}
\bm{F}_1 = \left[
\begin{array}{ccc}
0.985 & 0.0 & 0.15 \\
0.0 & 1.0 & 0.0 \\
-0.032 & 0.0 & 1.003
\end{array}
\right], \qquad
\bm{D} = \left[
\begin{array}{ccc}
60.0 & 0.0 & 170.0 \\
0.0 & 10.0 & 0.0 \\
-100.0 & 0.0 & 10.0
\end{array}
\right]
\end{align*}
for the formula \eqref{eq:a new coaxial gradient}. Again, the neo-Hookean model formulated in principal invariants is used with identical material parameters. Different components of the formulas are plotted in Figure \ref{fig:potential_problems_sca_coa} with $\xi$. It can be observed that the evaluation of the term $\Delta G_{\mathrm{ich}}-\bm{S}_m:\bm{Z}_n$ monotonically decreases with $\xi$. In contrast, the denominators are numerically non-robust. There are values of $\xi$ where the denominators $\bm S_m : \bm Z_n$ and $\bm C_m : \bm Z_n$ drop by several orders of magnitude. This consequently leads to the blow-up of the scaling factors and the stress enhancements. This pathological behavior renders the two algorithmic stresses non-robust as one needs to frequently encounter cases when the denominator gets below the tolerance $\mathrm{tol}_{\mathrm B}$. We, therefore, do not use those formulas in the following examples.

\subsection{Rotating propeller}
Here we assess the structure-preserving integrator combined with the grad-div stabilization. The problem setting is summarized in Table \ref{table:propeller}. The geometry is inspired by the benchmark problem studied in \cite{Armero2001b}, which consists of a central ring and three equally-spaced blades. Different from the original benchmark problem, we invoke NURBS for the geometrical description, which provides an exact circular ring. The complete geometrical information for the four patches, including the NURBS knot vectors and control points, can be found in the repository \cite{propeller_geo2023}. The material properties of the central ring and blades are characterized by the Ogden model, and the central ring is eight times stiffer than the blades. The motion is initiated by a body force imposed on the central ring, whose form is given by $\bm{B} \left(\bm{X}, t \right) = \tau(t) \left( \bm{E}_3 \times \bm{\varphi} \left( \bm{X}, t\right) \right)$, with
\begin{align*}
\tau(t) = 
\begin{cases}
2\tau_{\mathrm{max}} t / \bar{T}, & 0 \leq t \leq \bar{T}/2, \\
2\tau_{\mathrm{max}} \left(1-t/\bar{T} \right), &  \bar{T}/2 < t \leq \bar{T}, \\
0, & \bar{T} < t,
\end{cases}
\end{align*}
$\tau_{\mathrm{max}} = 5.6$, and $\bar{T} = 15$. Here $E_a$, for $a=1,2,3$, are fixed orthonormal bases in $\Omega_{\bm X}$.

\begin{table}[htbp]
  \centering
  \begin{tabular}{ m{.4\textwidth}   m{.45\textwidth} }
    \hline
    \begin{minipage}{.35\textwidth} 
      \includegraphics[width=1.0\linewidth, trim= 100 250 100 180, clip]{./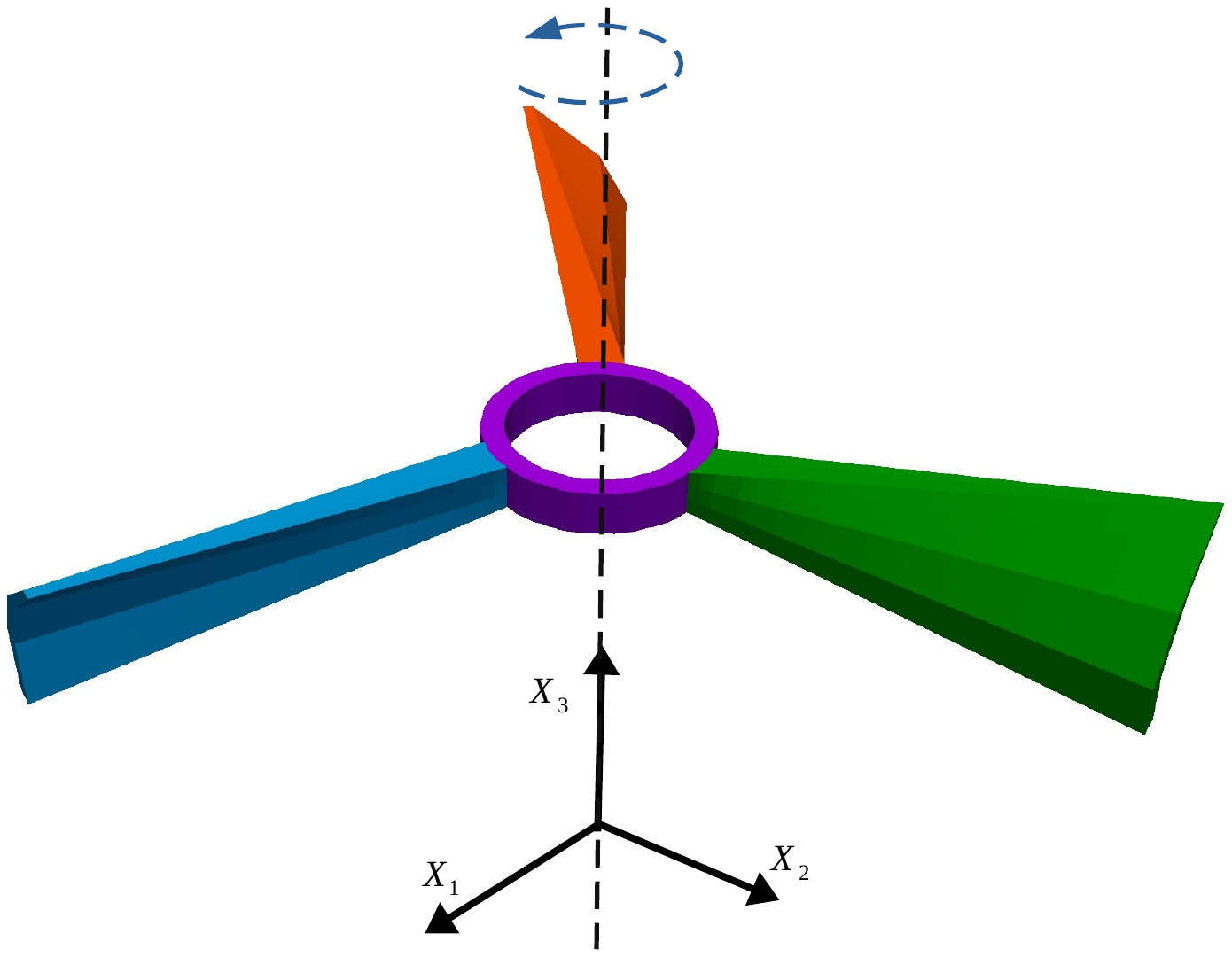}
    \end{minipage}
    &
    \begin{minipage}{.45\textwidth}
      \begin{itemize}
        \item[] Material properties:
        \item[] $\rho_0 = 8.93, \quad N =1$,
        \item[] $G_{\mathrm{ich}}( \tilde{\lambda}_1, \tilde{\lambda}_2, \tilde{\lambda}_3 ) = \sum\limits_{a=1}^{3} \sum\limits_{p=1}^{N} \frac{\mu_p}{\alpha_p}( \tilde{\lambda}_{a}^{\alpha_p} - 1 )$,
        \item[] central ring: $\alpha_1 = 2$ and  $\mu_1 = 307.68$,
        \item[] three blades: $\alpha_1 = 2$ and $\mu_1 = 38.46$.
      \end{itemize}      
    \end{minipage}   
    \\ 
    \hline
  \end{tabular}
  \caption{The three-dimensional rotating propeller: problem definition. The dashed line represents the axis of the ring corresponding to the $X_3$ axis.} 
\label{table:propeller}
\end{table}

\begin{figure}
\begin{center}
\begin{tabular}{cccc}
\includegraphics[angle=0, trim=250 160 250 160, clip=true, scale = 0.06]{./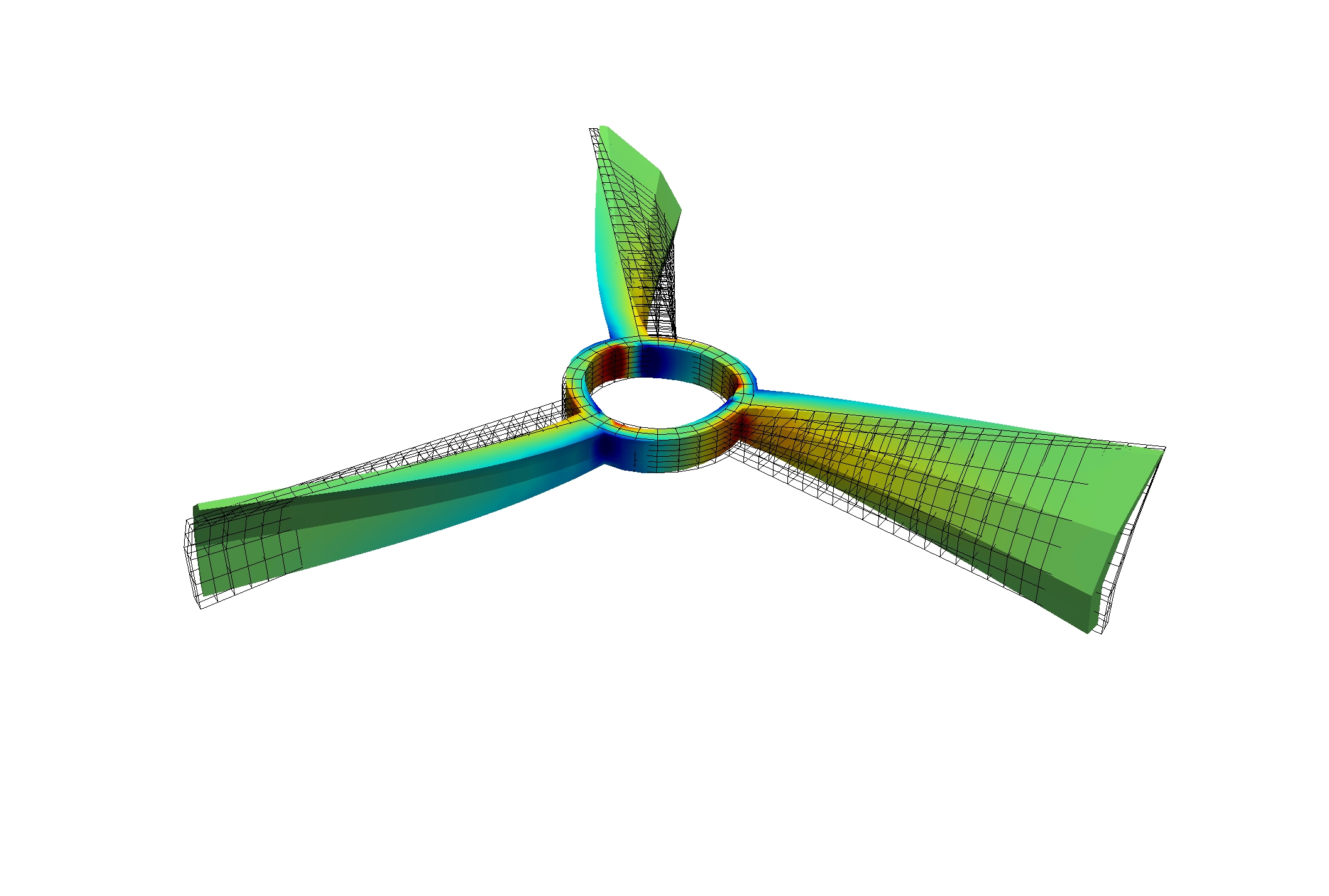} &
\includegraphics[angle=0, trim=250 160 250 160, clip=true, scale = 0.06]{./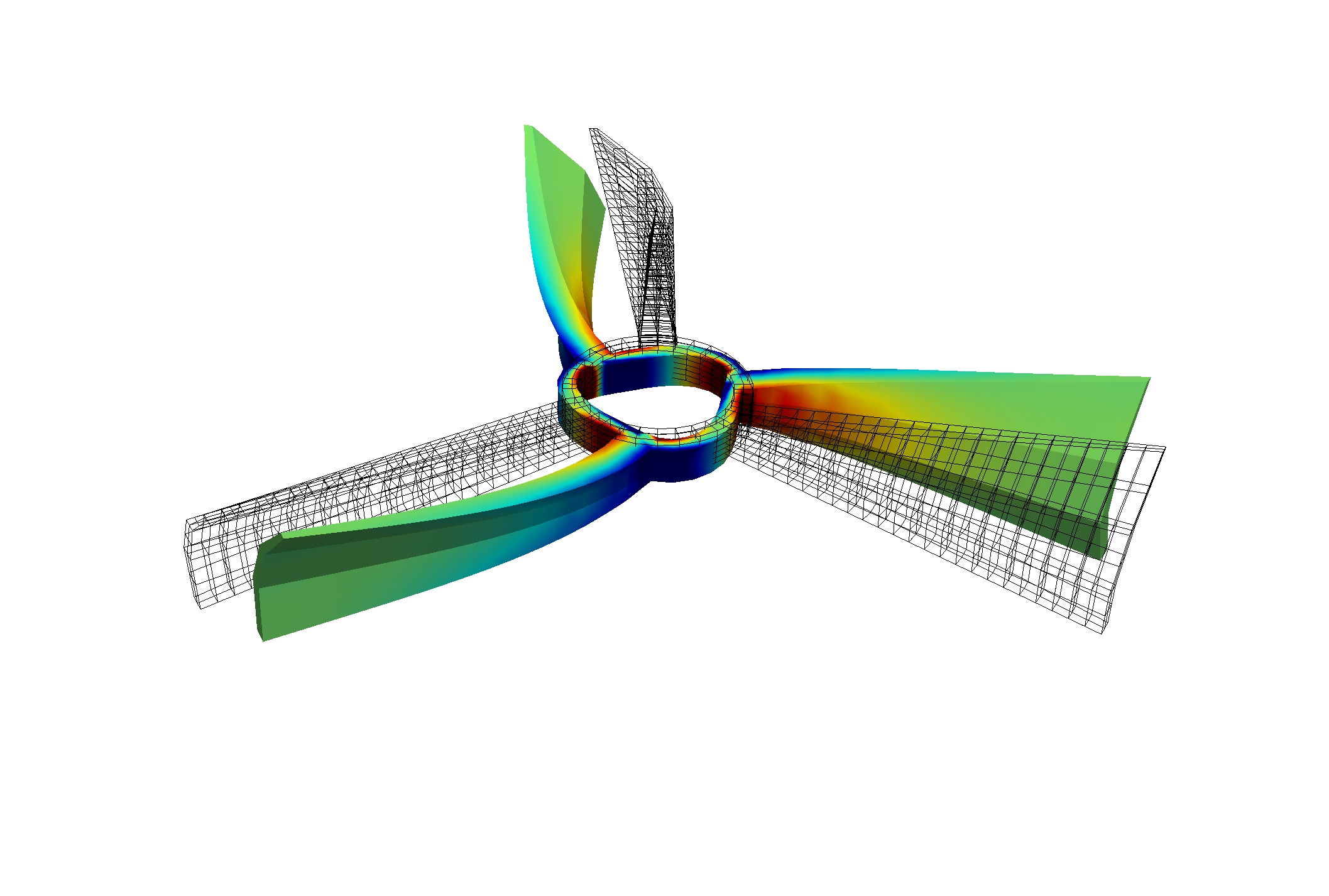} &
\includegraphics[angle=0, trim=250 160 250 160, clip=true, scale = 0.06]{./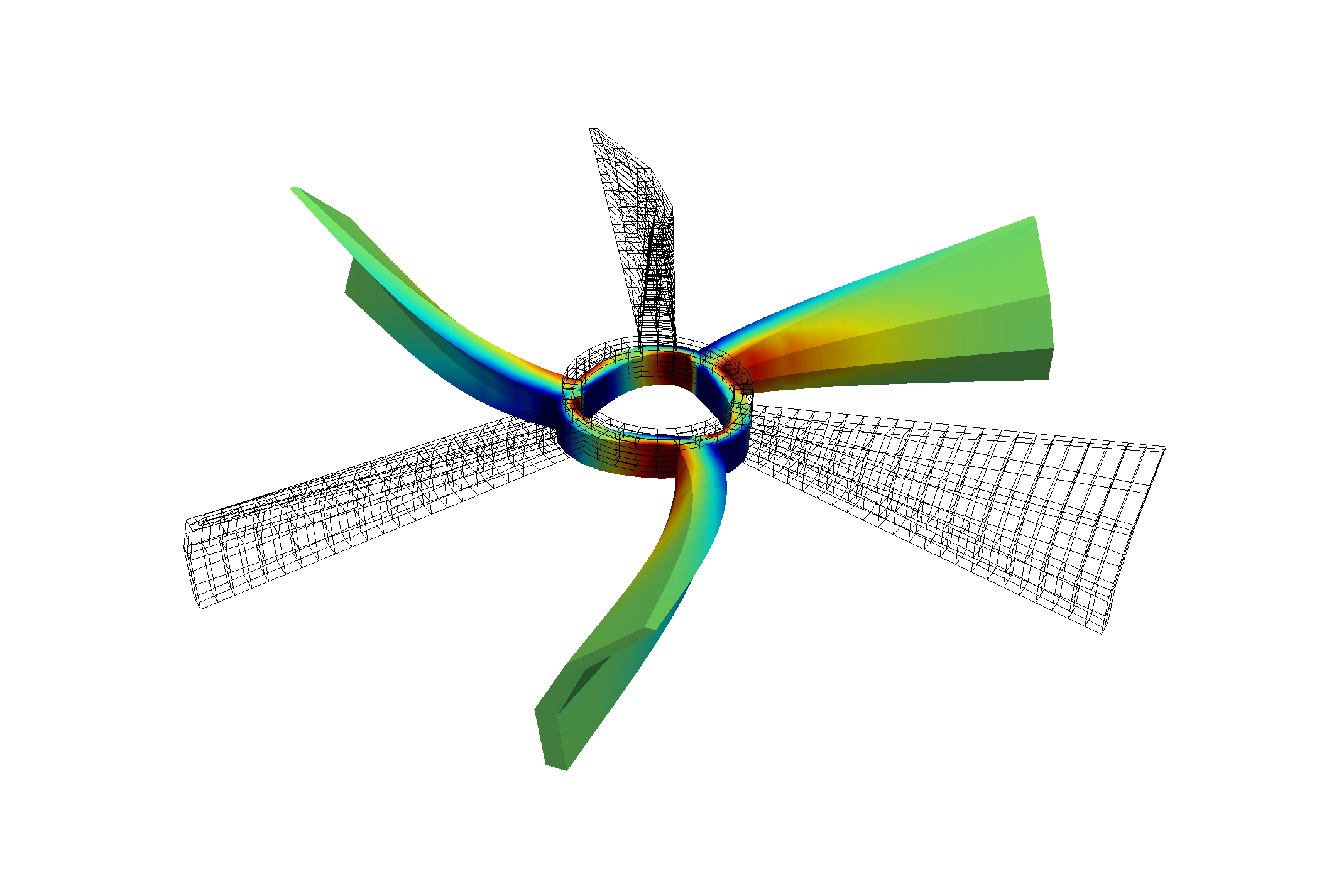} &
\includegraphics[angle=0, trim=250 160 250 160, clip=true, scale = 0.06]{./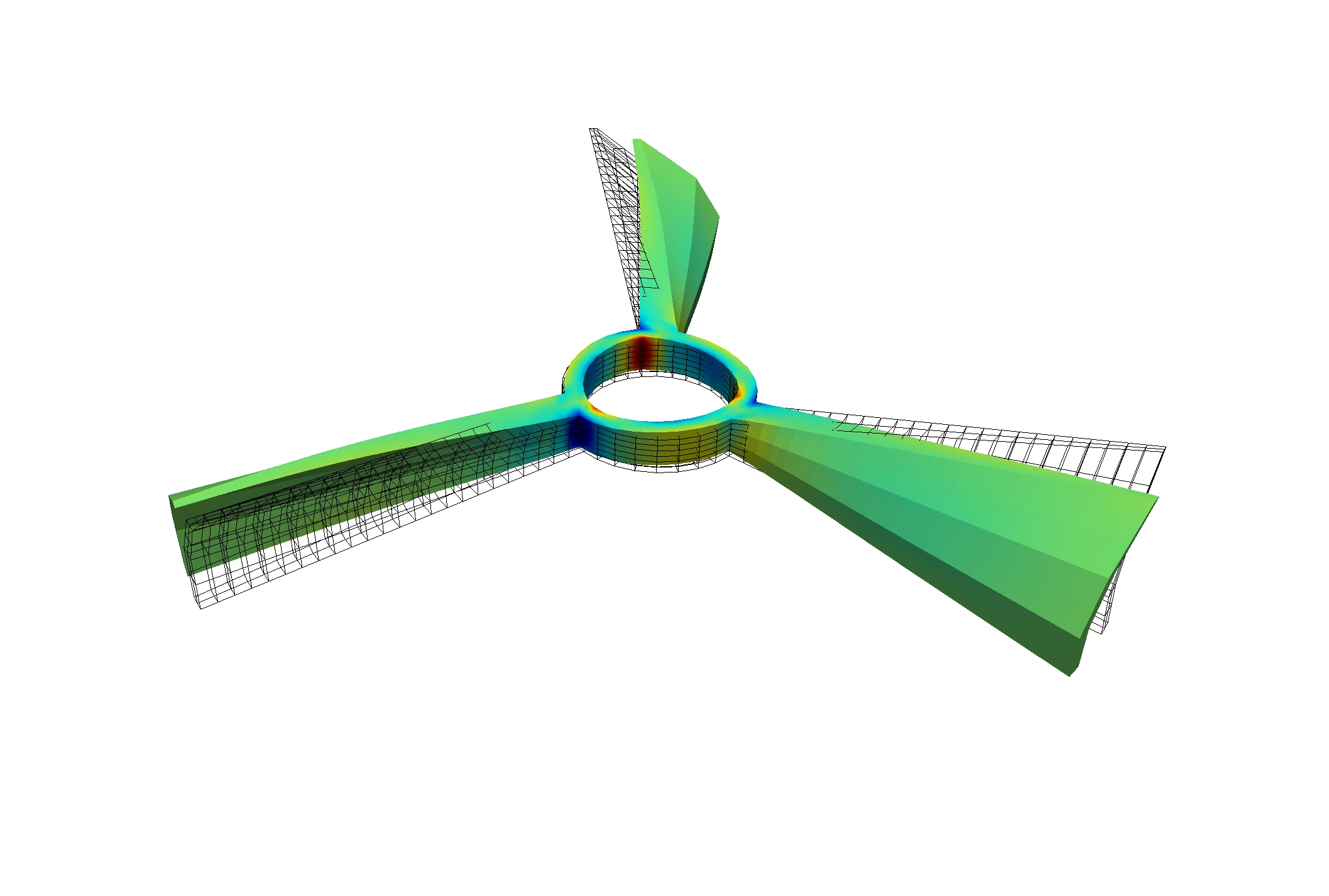} \\
$t = 3$ & $6$ & $9$ & $12$ \\
\includegraphics[angle=0, trim=250 160 250 160, clip=true, scale = 0.06]{./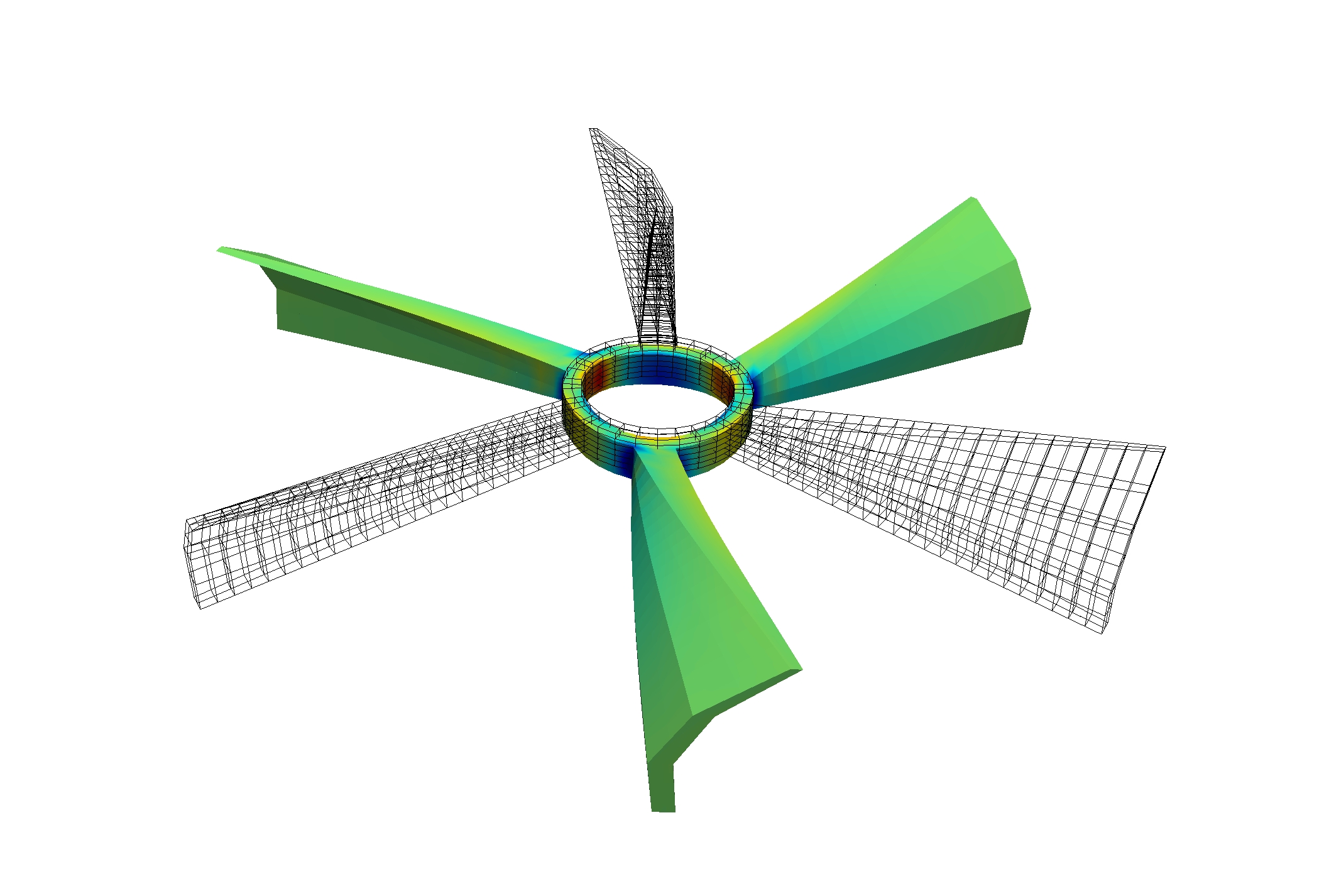} &
\includegraphics[angle=0, trim=250 160 250 160, clip=true, scale = 0.06]{./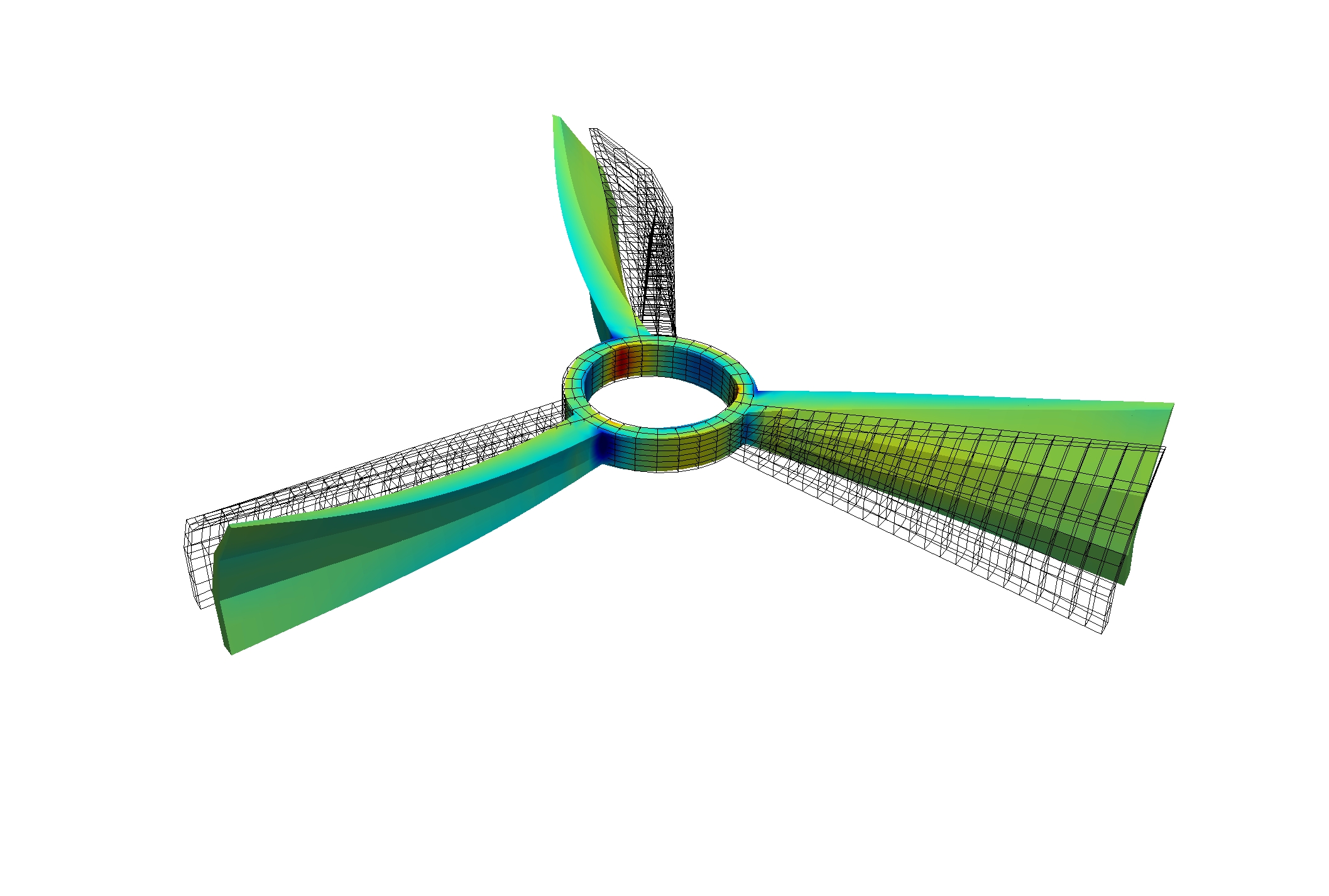} &
\includegraphics[angle=0, trim=250 160 250 160, clip=true, scale = 0.06]{./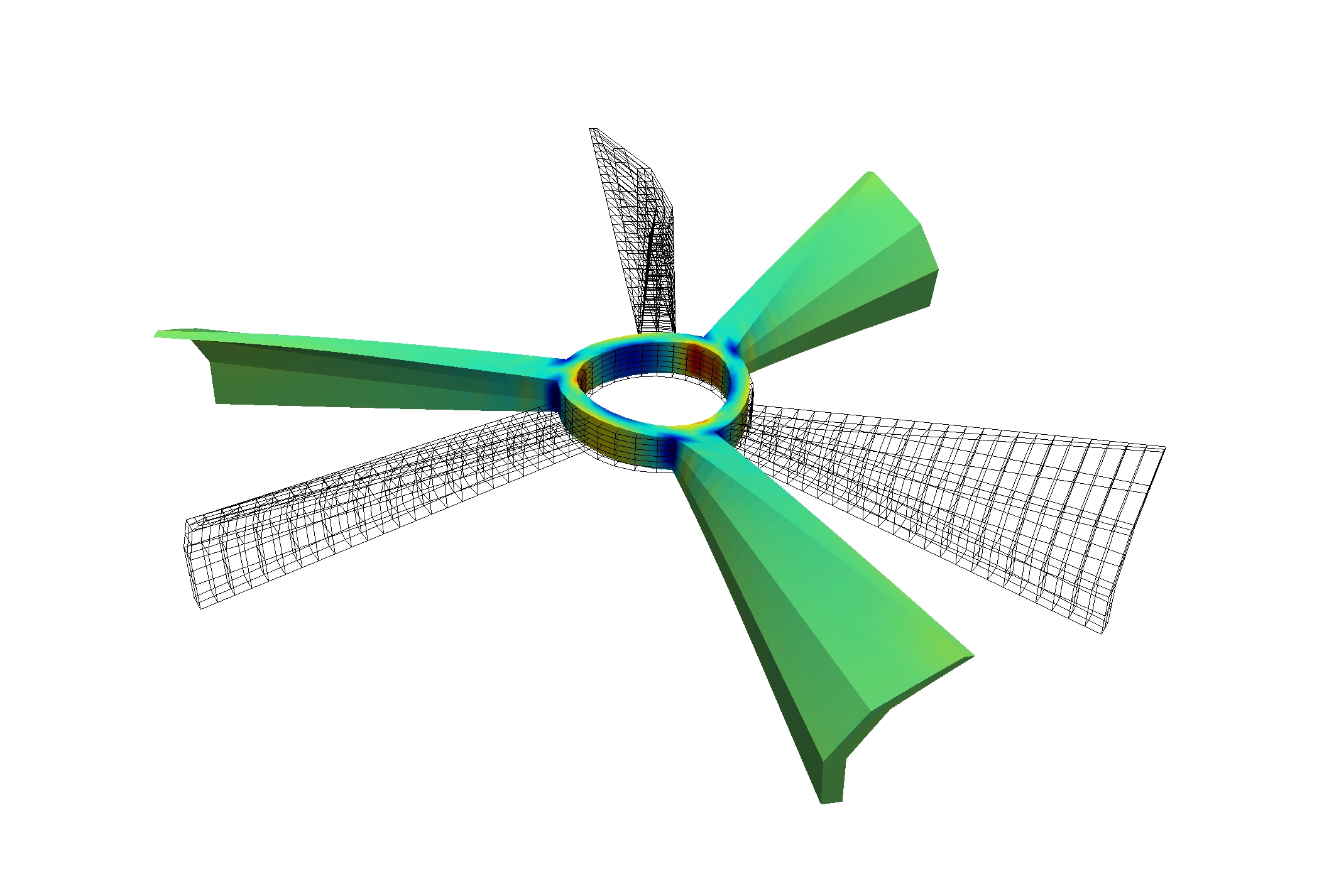} &
\includegraphics[angle=0, trim=250 160 250 160, clip=true, scale = 0.06]{./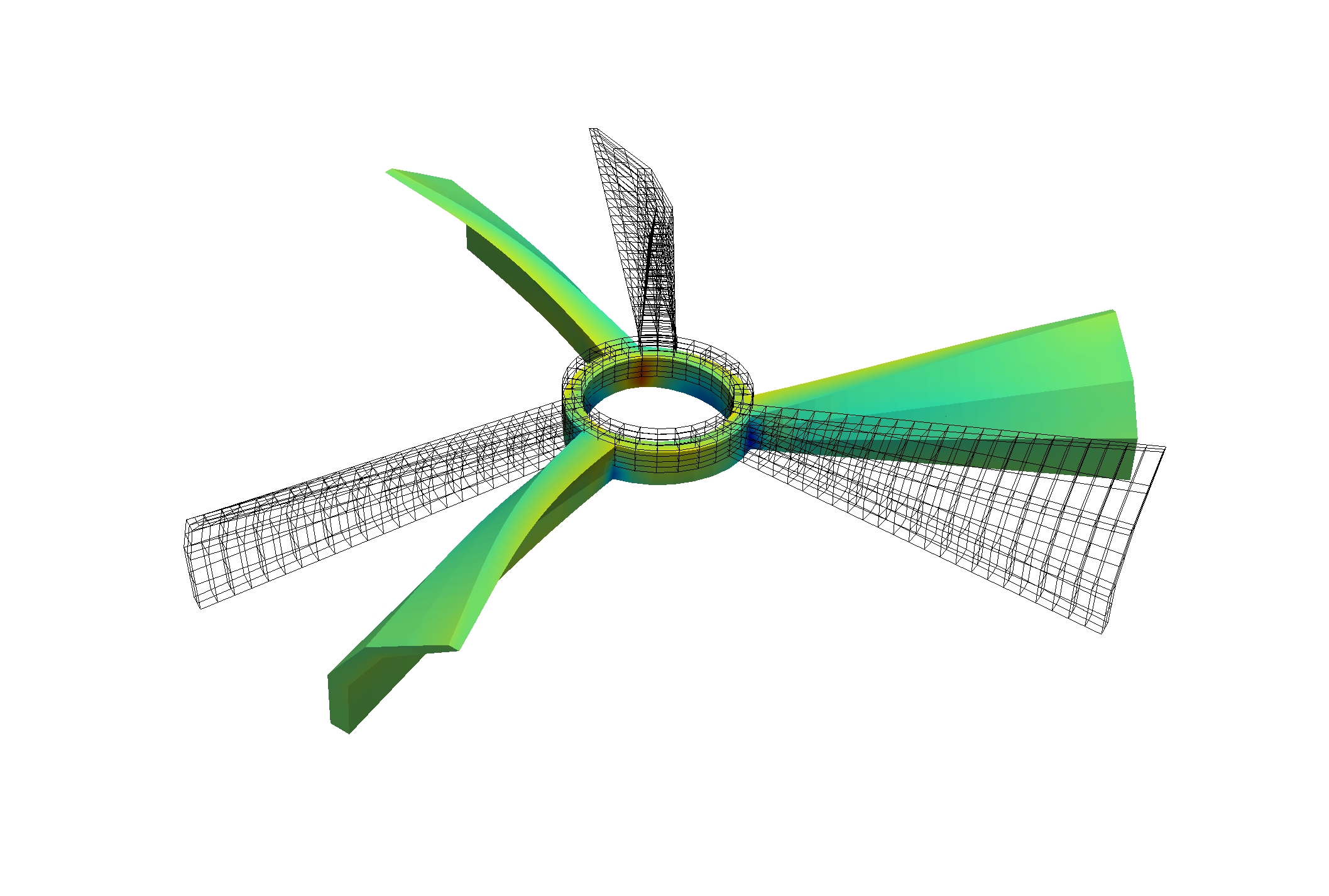} \\
$15$ & $18$ & $21$ & $24$ \\
\multicolumn{4}{c}{ \includegraphics[angle=0, trim=250 380 250 940, clip=true, scale = 0.25]{./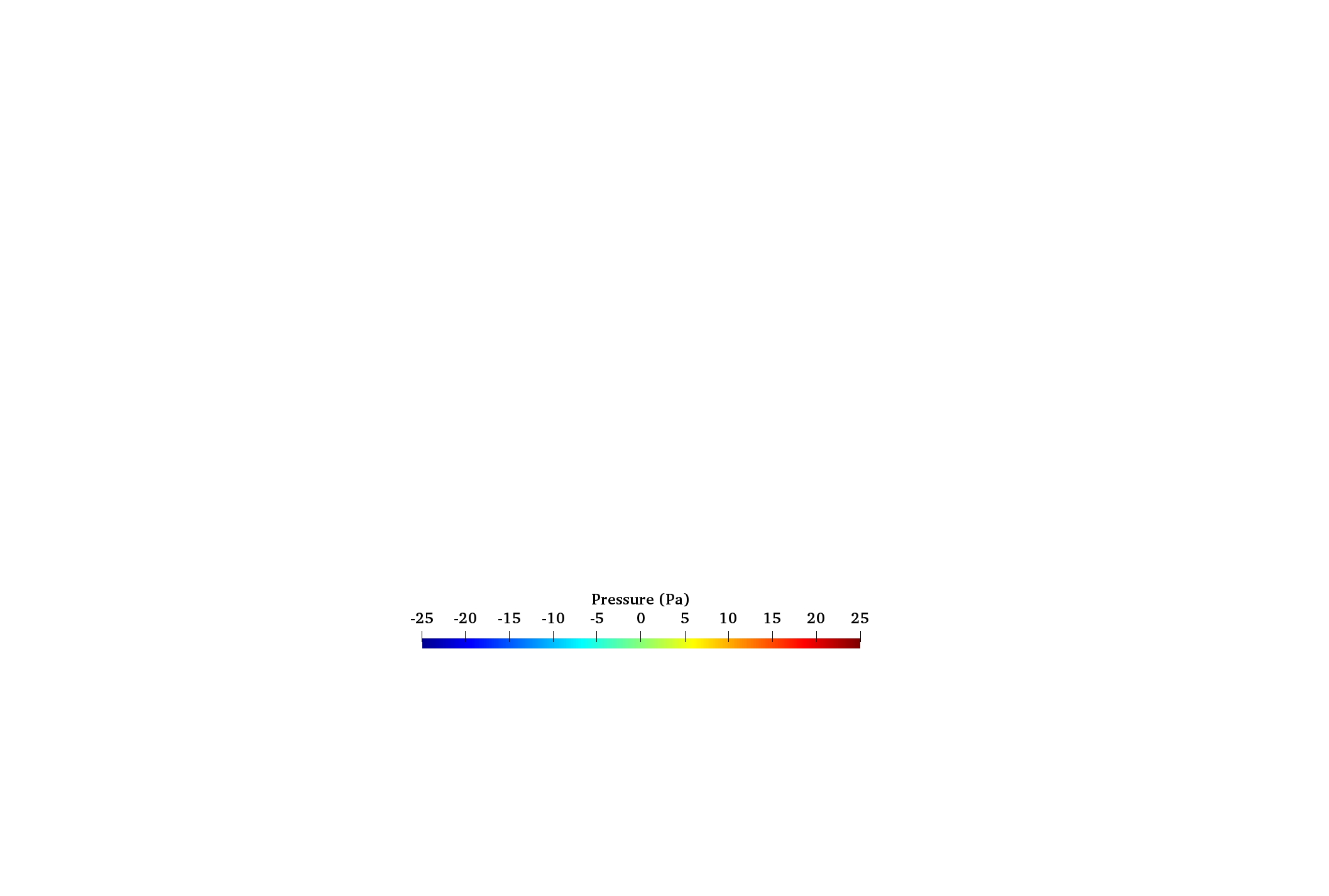} }
\end{tabular}
\caption{Snapshots of the pressure on the deformed configurations with $\Delta t = 0.05$. The black grid represents the initial undeformed configuration.}
\label{fig:propeller_deformation}
\end{center}
\end{figure}

\begin{figure}
\begin{center}
\begin{tabular}{cc}
\includegraphics[angle=0, trim=150 330 150 220, clip=true, scale = 0.1]{./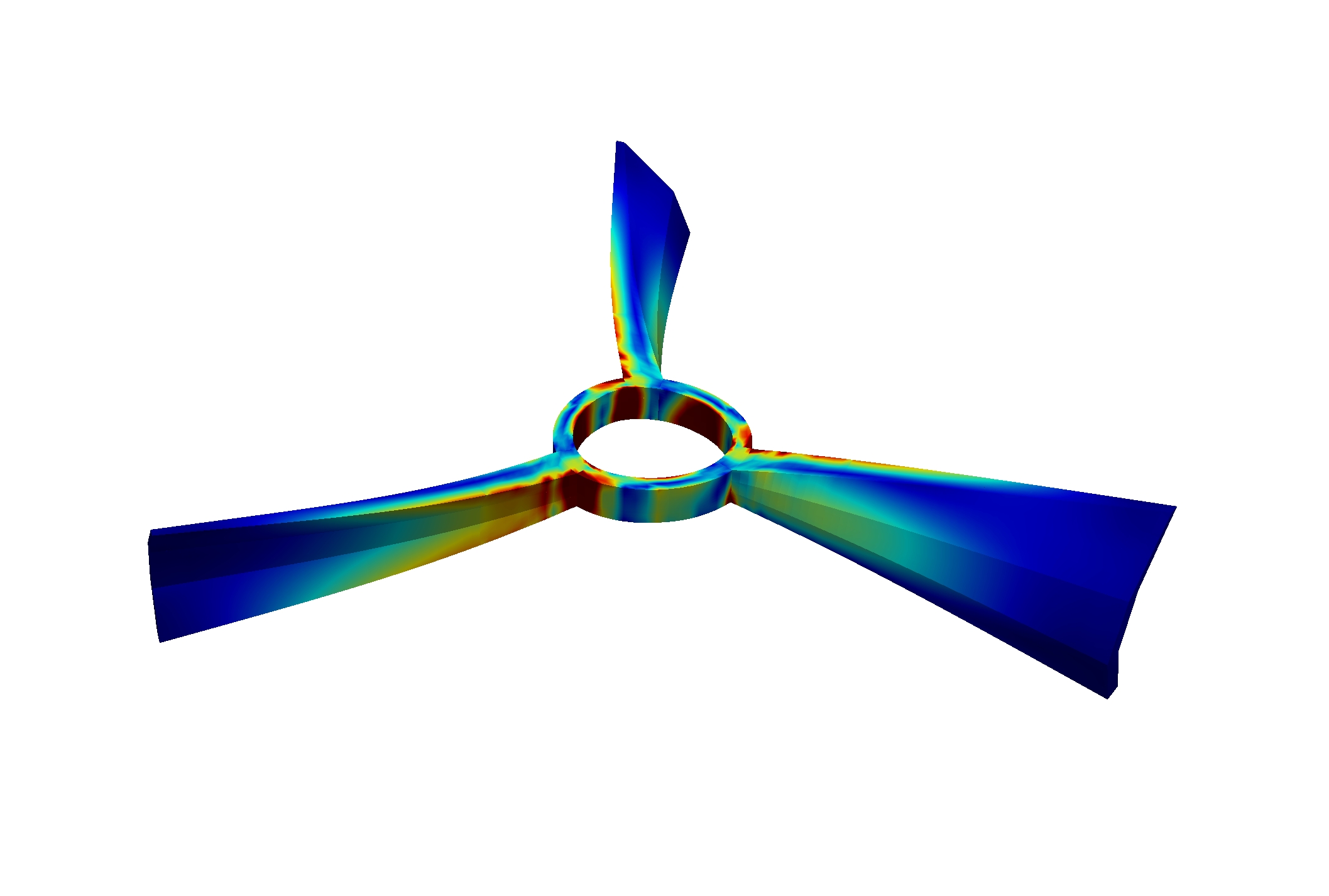} &
\includegraphics[angle=0, trim=150 330 150 220, clip=true, scale = 0.1]{./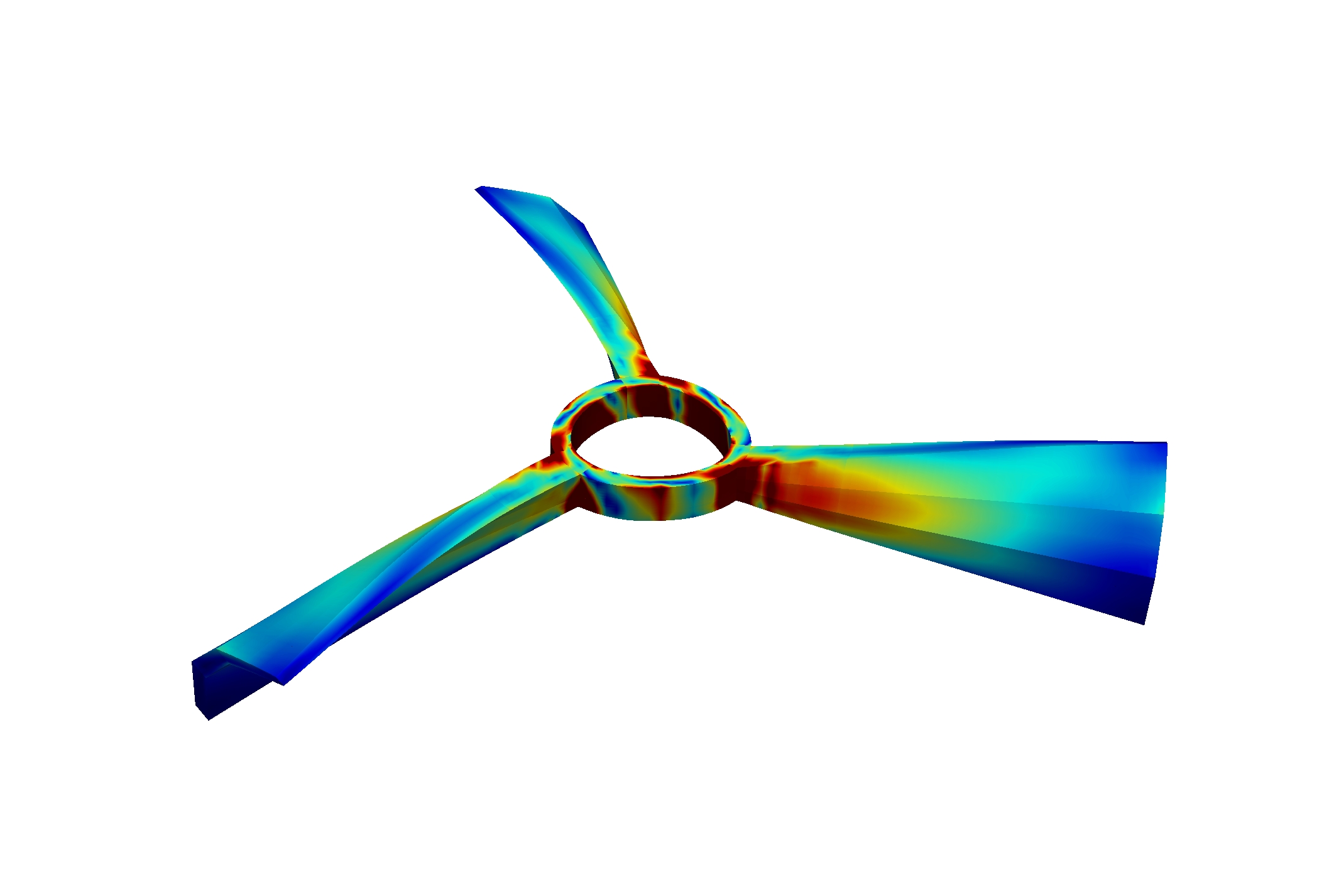} \\
\includegraphics[angle=0, trim=150 330 150 220, clip=true, scale = 0.1]{./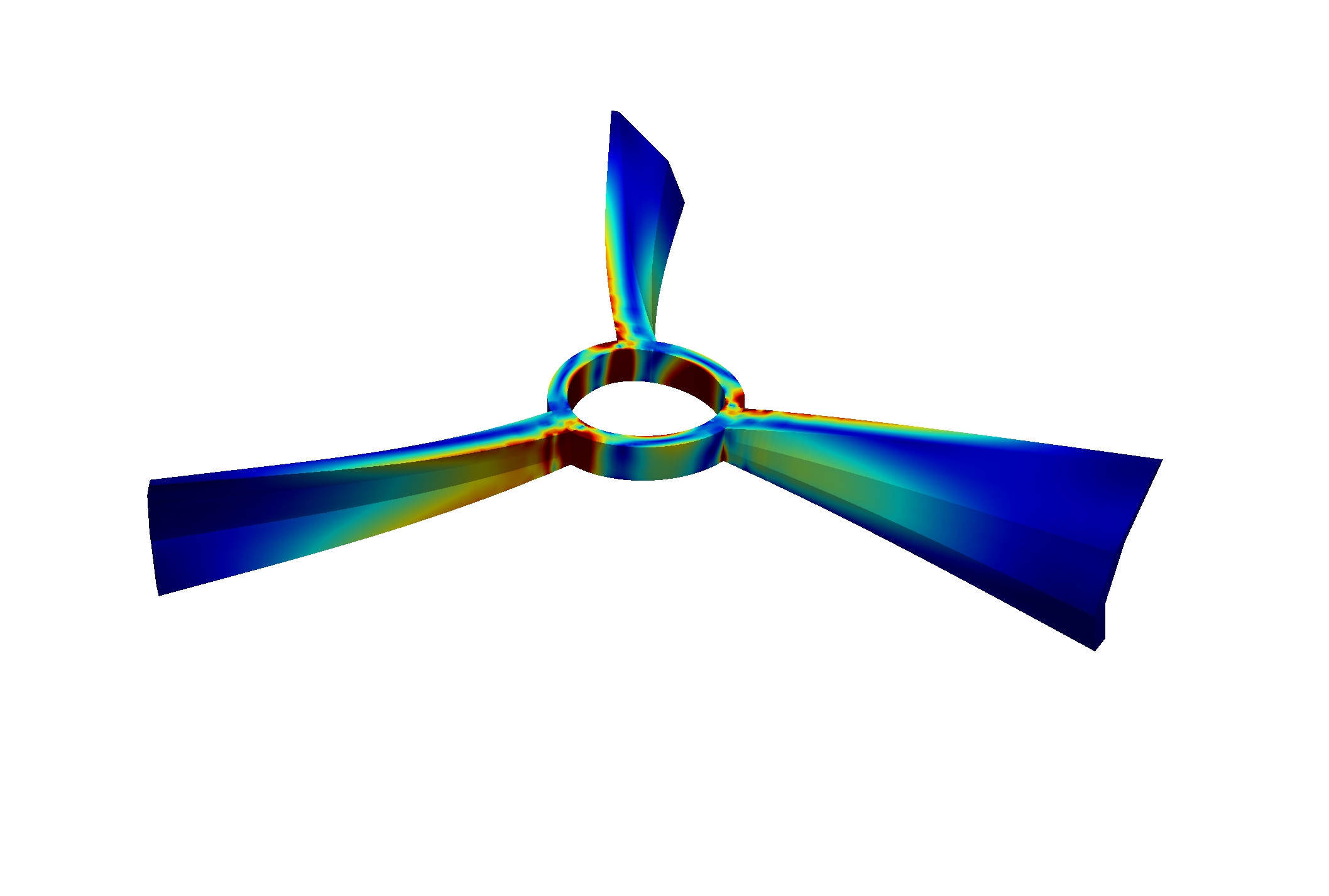} &
\includegraphics[angle=0, trim=150 330 150 220, clip=true, scale = 0.1]{./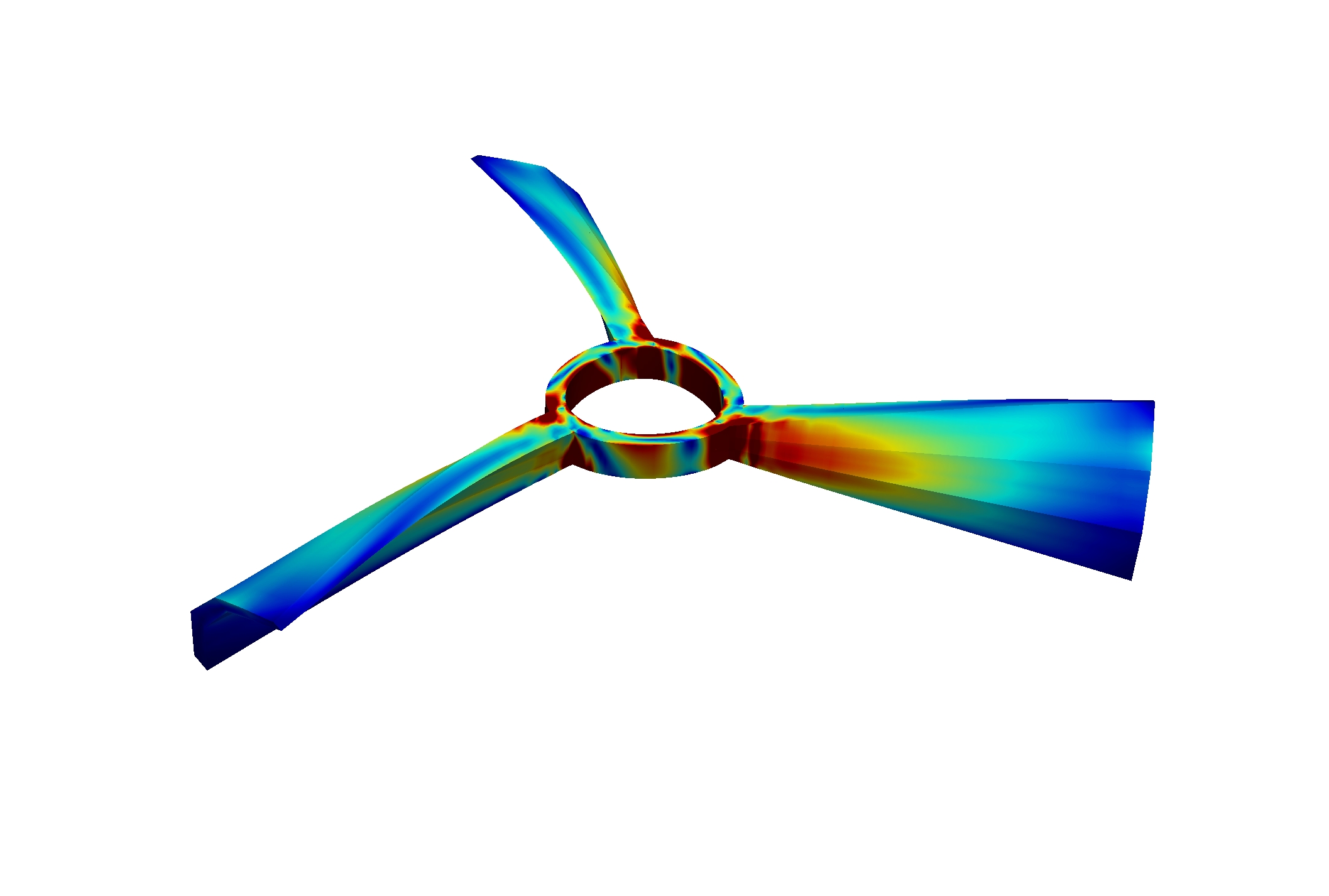} \\
$t = 17.5$ & $t = 28.5$ \\
\multicolumn{2}{c}{ \includegraphics[angle=0, trim=200 400 200 880, clip=true, scale = 0.25]{./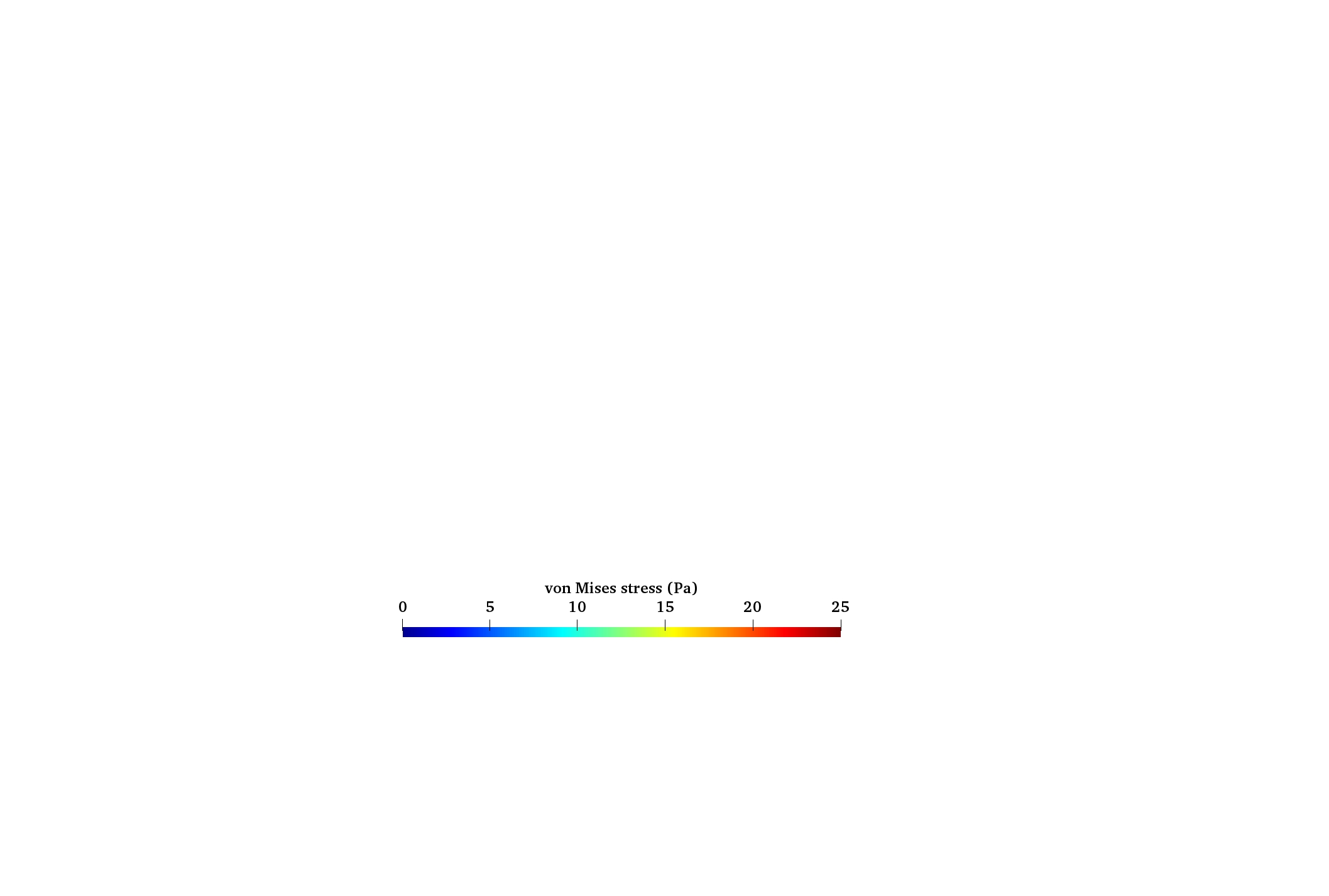} }
\end{tabular}
\caption{Comparison of the von Mises stress obtained by the coarse (top) and fine (bottom) meshes with $\Delta t = 0.05$.}
\label{fig:propeller_stress_convergence}
\end{center}
\end{figure}

\begin{figure}
\begin{center}
\begin{tabular}{cc}
\includegraphics[angle=0, trim=100 180 100 100, clip=true, scale = 0.21]{./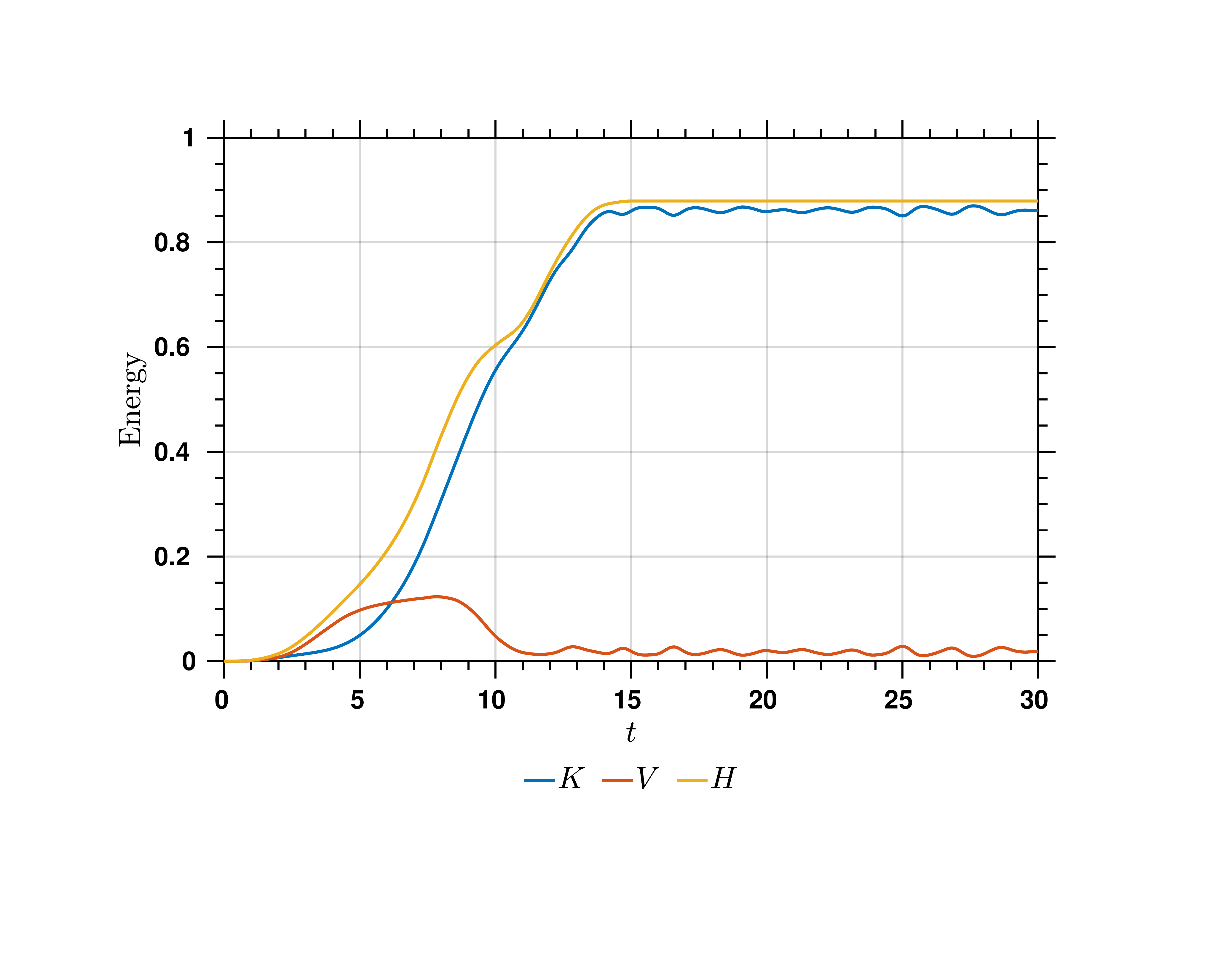} &
\includegraphics[angle=0, trim=100 180 100 100, clip=true, scale = 0.21]{./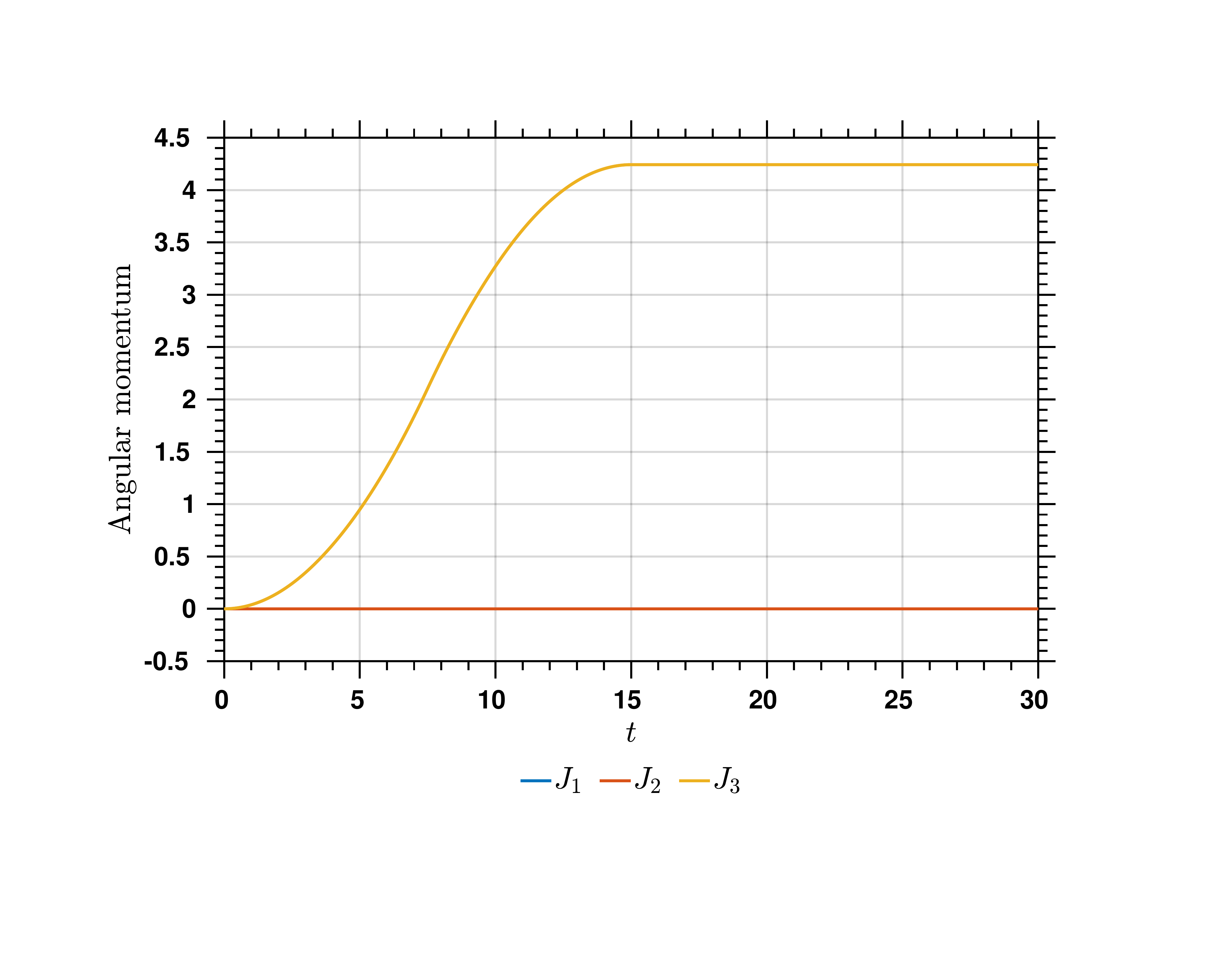}
\end{tabular}
\caption{The time histories of the discrete energy and angular momentum for the rotating propeller with $\Delta t = 0.05$.}
\label{fig:propeller_energy_momenta}
\end{center}
\end{figure}

\begin{figure}
\begin{center}
\begin{tabular}{cc}
\includegraphics[angle=0, trim=80 80 100 120, clip=true, scale = 0.21]{./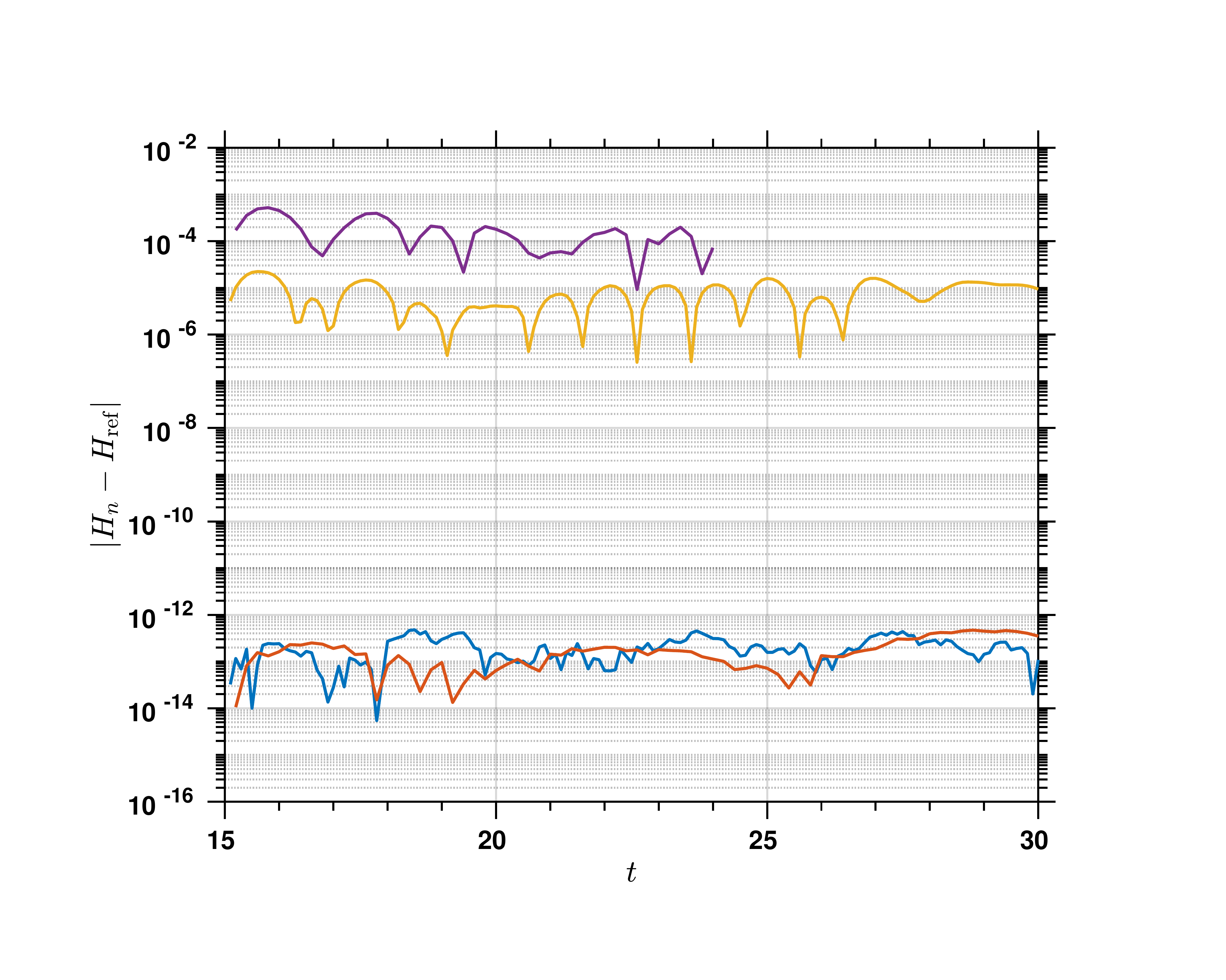} &
\includegraphics[angle=0, trim=80 80 100 120, clip=true, scale = 0.21]{./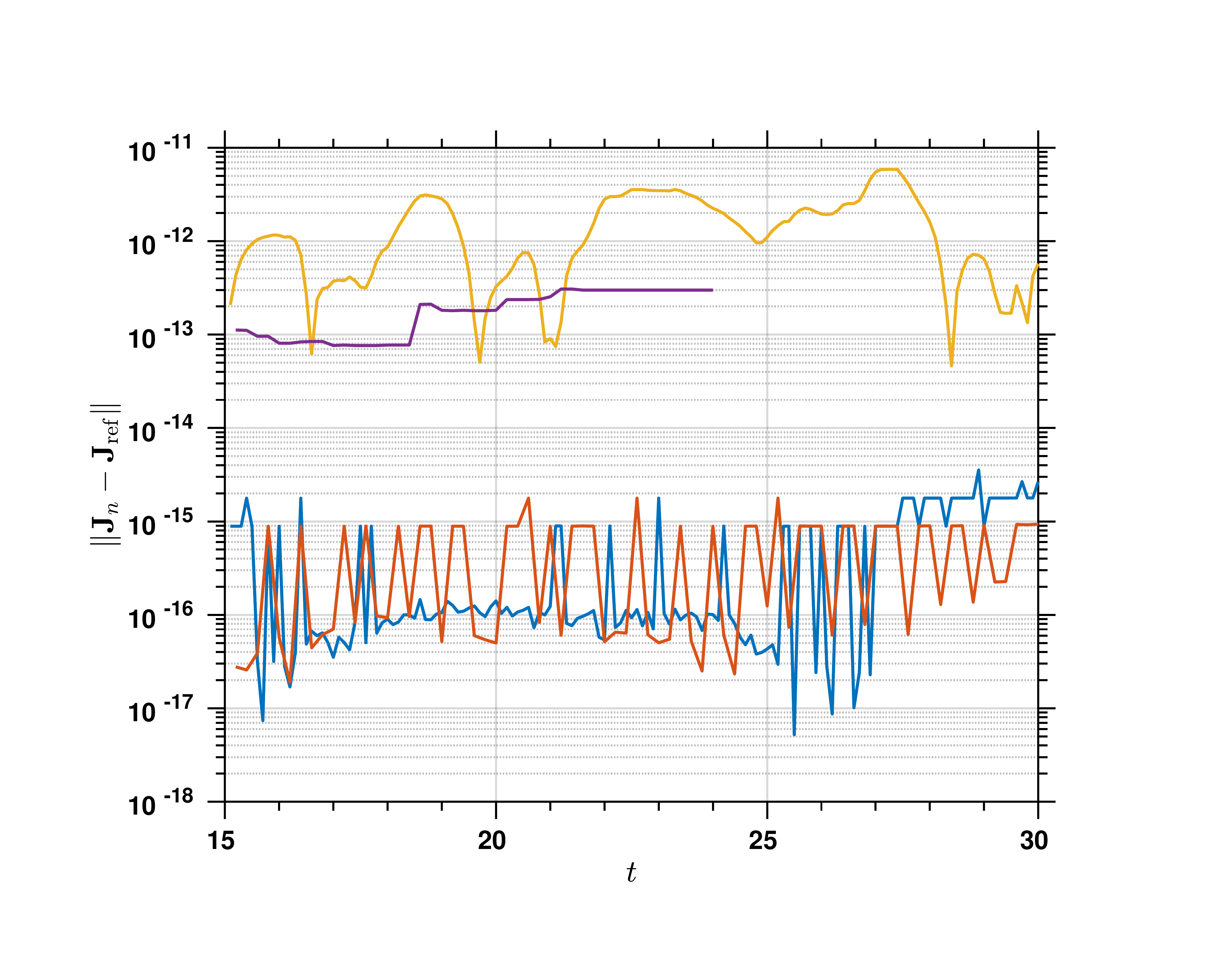} \\
\multicolumn{2}{c}{ \includegraphics[angle=0, trim=0 165 0 800, clip=true, scale = 0.35]{./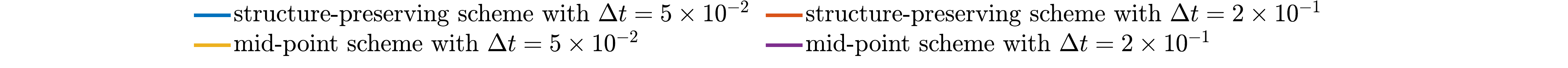} }
\end{tabular}
\caption{The time histories of the absolute errors of the total Hamiltonian and angular momentum given by the structure-preserving scheme and the mid-point scheme with two different time step sizes. The reference quantities are chosen as their values at time $\bar{T}$, that is, $(\cdot)_{\mathrm{ref}} = (\cdot)|_{t=15}$.}
\label{fig:propeller_relative_errors}
\end{center}
\end{figure}

\begin{figure}
\begin{center}
\begin{tabular}{cc}
\includegraphics[angle=0, trim=100 85 120 110, clip=true, scale = 0.22]{./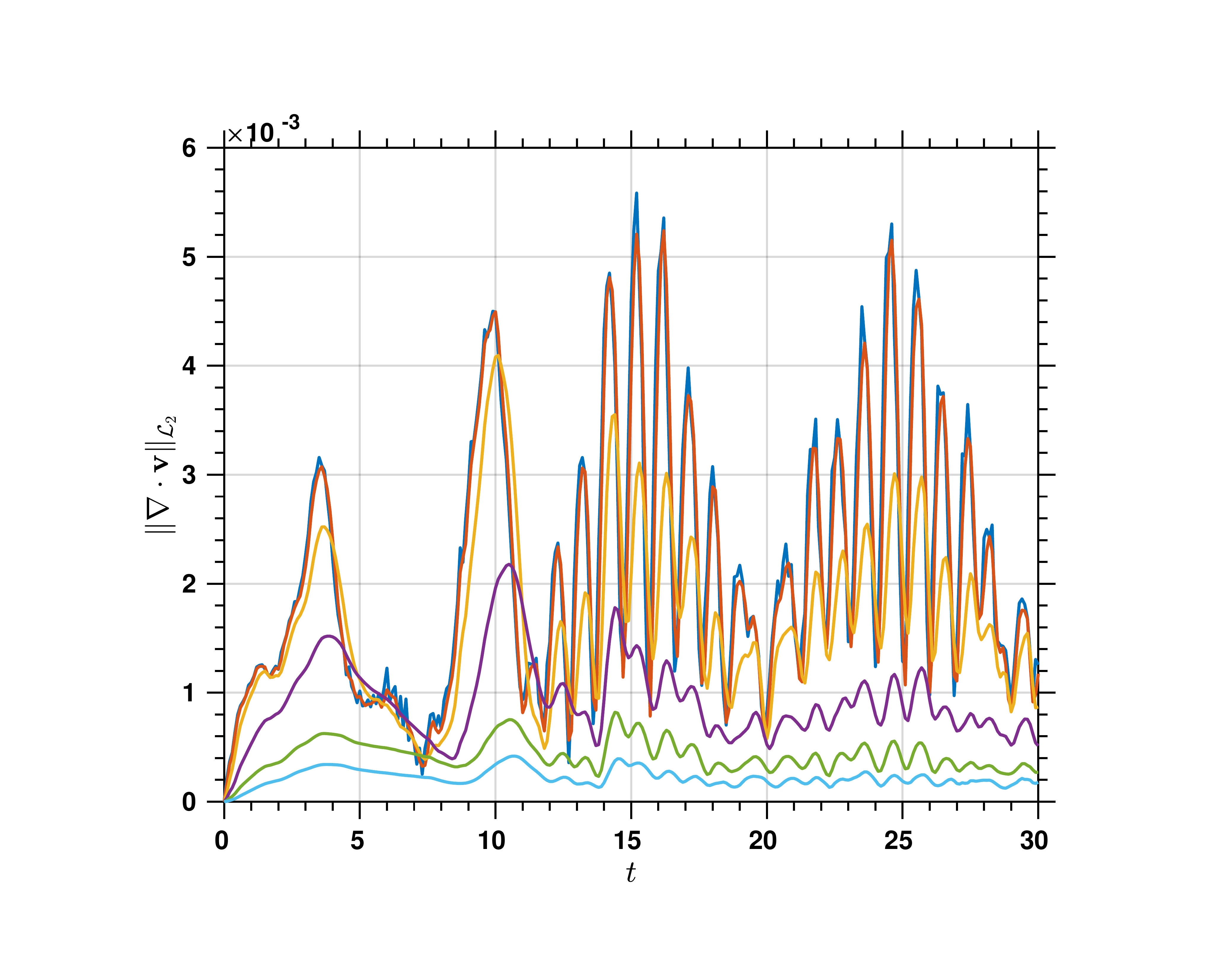} &
\includegraphics[angle=0, trim=100 85 120 110, clip=true, scale = 0.22]{./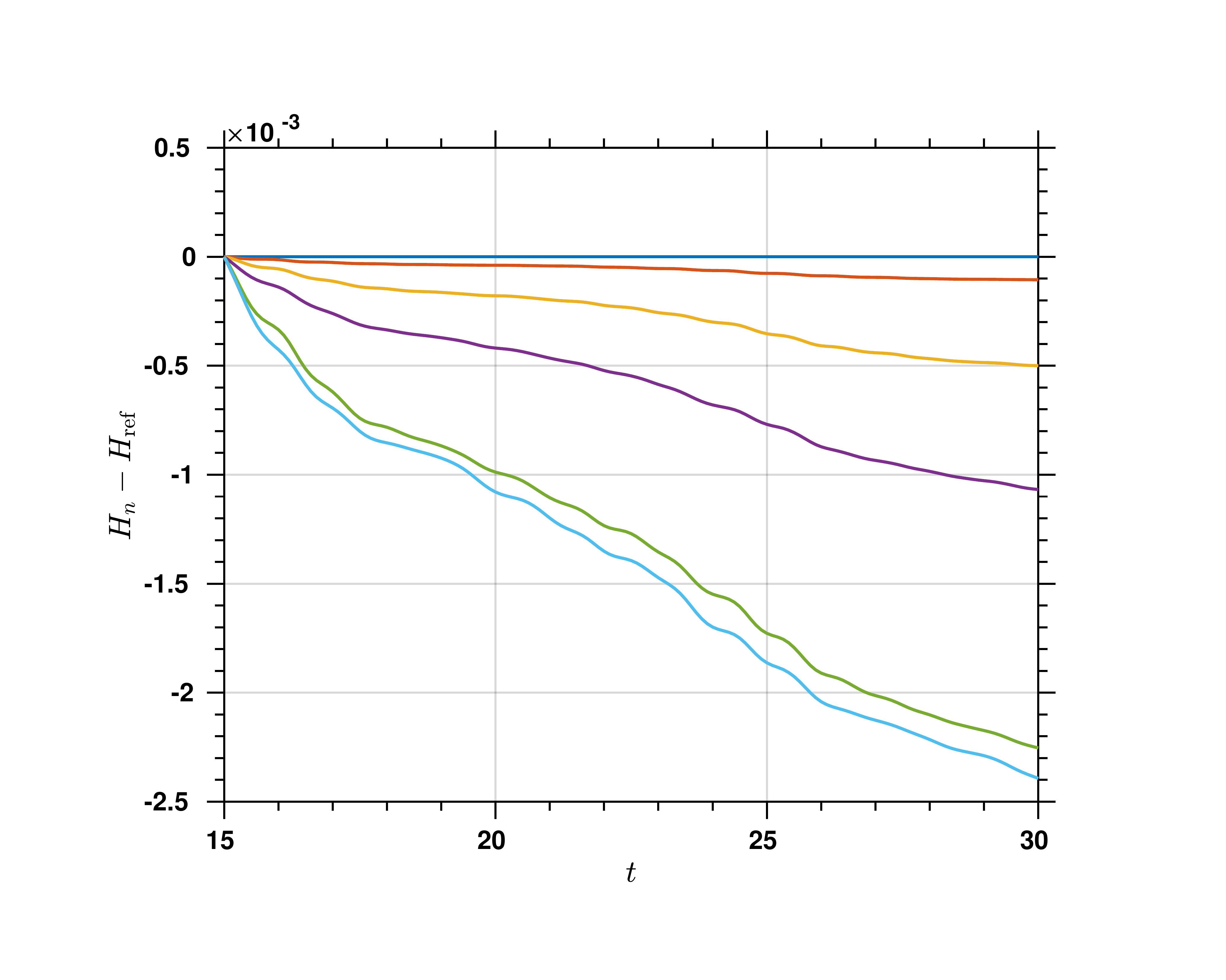} \\
(a) & (b) \\
\includegraphics[angle=0, trim=80 85 120 110, clip=true, scale = 0.22]{./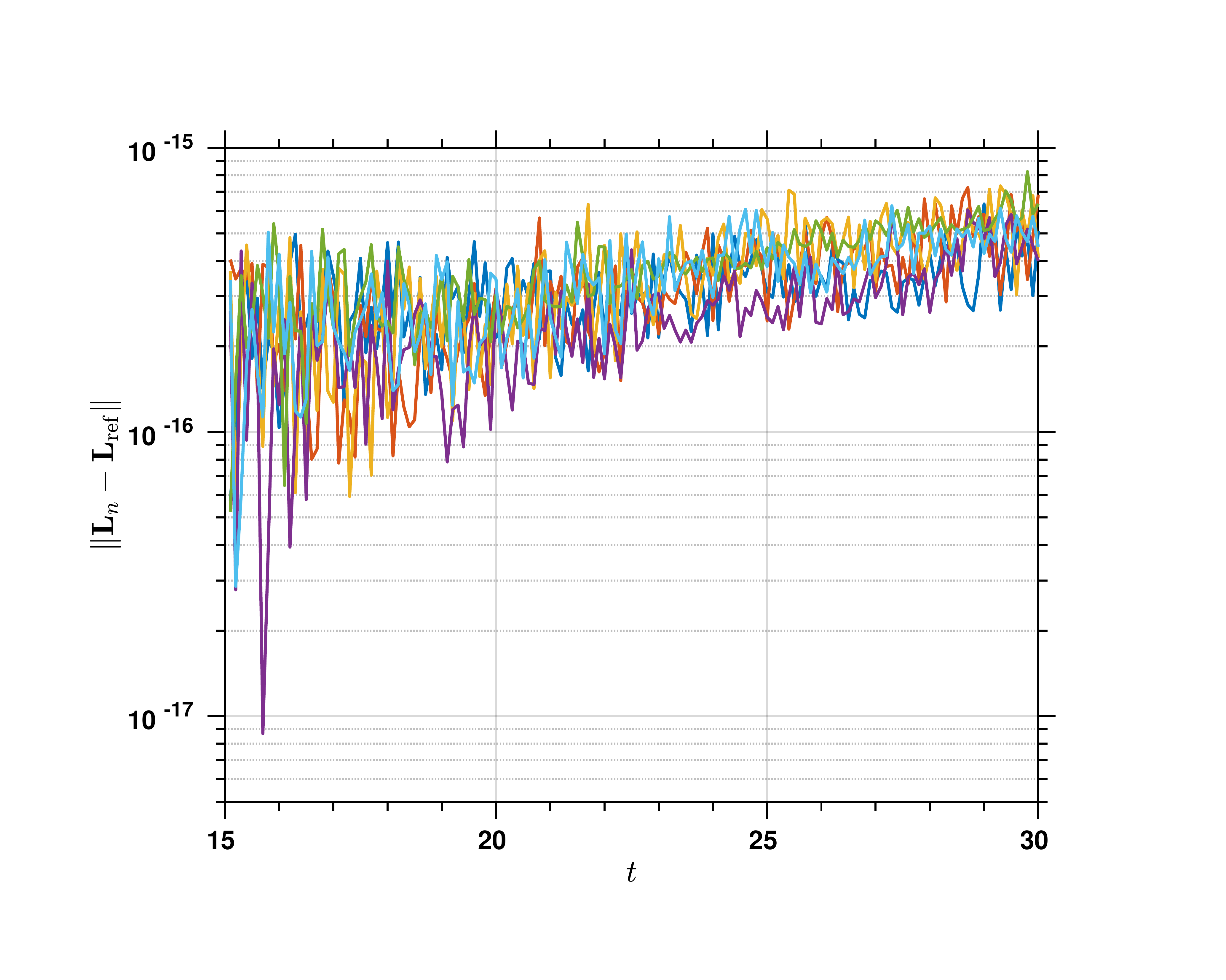} &
\includegraphics[angle=0, trim=80 85 120 110, clip=true, scale = 0.22]{./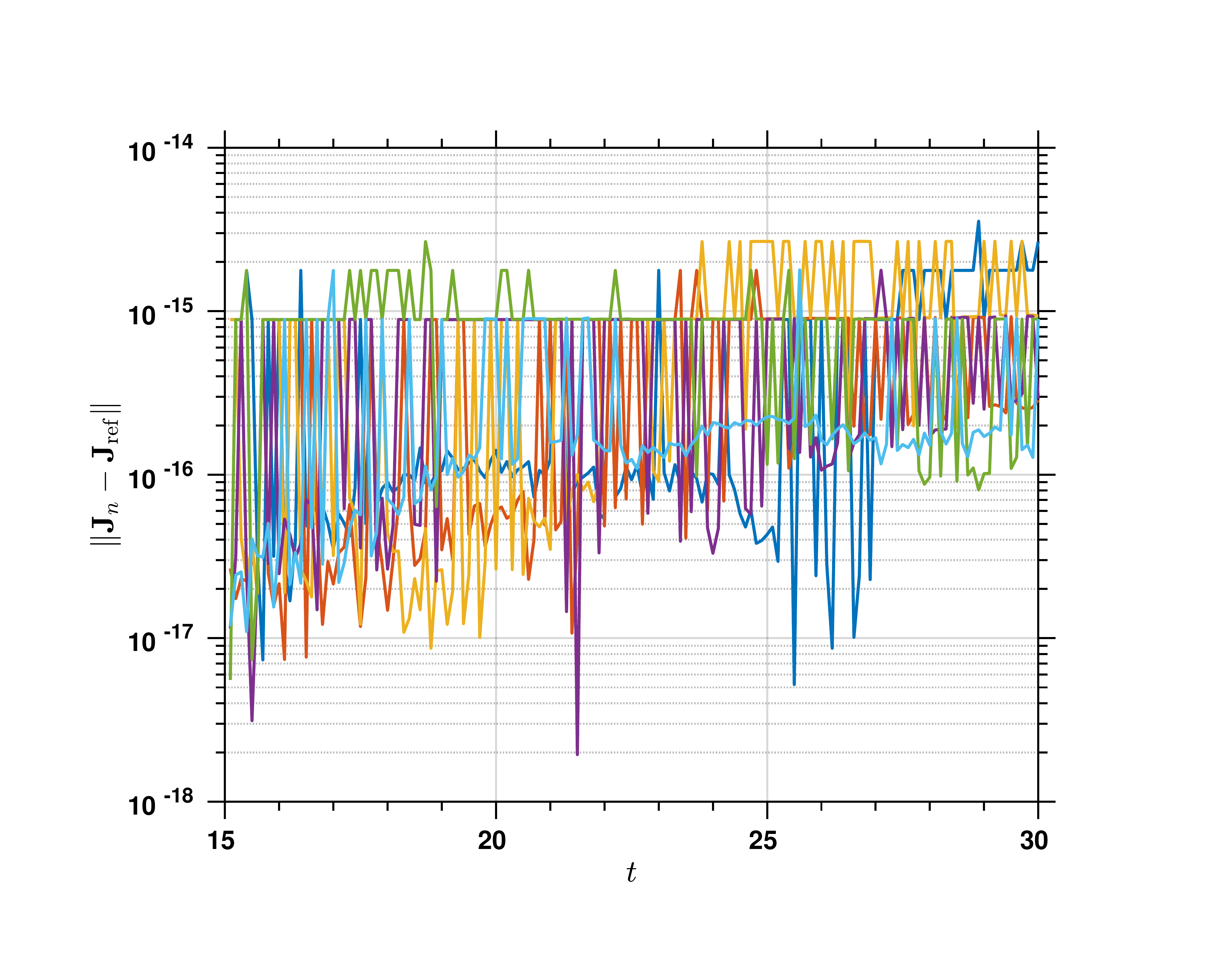} \\
(c) & (d) \\
\multicolumn{2}{c}{ \includegraphics[angle=0, trim=0 165 0 750, clip=true, scale = 0.35]{./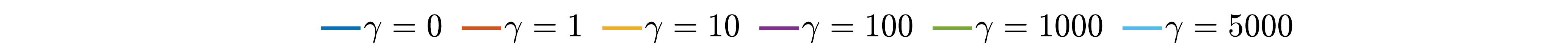} }
\end{tabular}
\caption{The effect of the grad-div stabilization parameter $\gamma$ for the rotating propeller: (a) $\| \nabla \cdot \bm{v} \|_{\mathcal{L}_2}$, (b) the absolute errors of the total Hamiltonian, (c) the absolute errors of the total linear momentum, and (d) the absolute errors of the angular momentum over time.}
\label{fig:propeller_graddiv}
\end{center}
\end{figure}

The problem is integrated up to $ T=30$ using the proposed scheme and the mid-point scheme. Two spatial meshes are prepared with $\mathsf p=2$. The coarse (fine) mesh consists of 45 (180) elements in the central ring and 36 (192) elements in each of the three blades. Unless otherwise specified, we set the grad-div stabilization parameter $\gamma$ to zero.  In Figure \ref{fig:propeller_deformation}, a sequence of the deformed states with the pressure distribution at different time instants are depicted; in Figure \ref{fig:propeller_stress_convergence}, the von Mises stress obtained by the two meshes is displayed at $t = 17.5$ and $28.5$, respectively. It is observed that the von Mises stress obtained by the two meshes is almost indistinguishable, suggesting the stress has been accurately resolved by the coarse mesh.

The time histories of the Hamiltonian and angular momentum are depicted in Figure \ref{fig:propeller_energy_momenta}. To better illustrate the conserving properties, the absolute errors of the Hamiltonian and angular momentum are plotted in Figure \ref{fig:propeller_relative_errors}. It can be observed from the figures that the magnitudes of the Hamiltonian errors are smaller than $10^{-12}$ for the structure-preserving scheme, regardless of the time step size. The mid-point scheme exhibits an absolute error of the order $10^{-5}$ with $\Delta t= 5\times 10^{-2}$ and an early termination with $\Delta t=2\times 10^{-1}$. In terms of the angular momentum, the absolute errors of the structure-preserving integrator are of order $10^{-15}$, which is approximately three orders of magnitude smaller than the errors of the mid-point scheme.

The effect of the grad-div stabilization is studied next. The time step size is fixed to be $\Delta t = 0.05$. The parameter $\gamma$ varies from $0$ to $5\times 10^3$. In Figure \ref{fig:propeller_graddiv}(a), the $\mathcal L_2$-norm of $\nabla \cdot \bm{v}$ over time is depicted. It can be observed that $\| \nabla \cdot \bm{v} \|_{\mathcal L_2}$ decreases with the increase of the parameter $\gamma$. In Figure \ref{fig:propeller_graddiv}(b), the time histories of $H_n - H_{\mathrm{ref}}$ is plotted, from which one may observe the dissipation effect of the grad-div stabilization term. The amount of the numerical dissipation is proportional to the value of $\gamma$. In the meantime, we do notice that the curves corresponding to $\gamma=1000$ and $5000$ are getting closer. In our numerical experience, there are cases when increasing the value of $\gamma$ leads to a reduction of dissipation. Indeed, there is a competing mechanism in the definition of the numerical dissipation $\mathcal{D}_{m}$, and we expect that the dissipation will vanish when the value of $\gamma$ gets large enough (see Remark \ref{remark:grad-div-dissipation-vanishment}). The effects of the grad-div stabilization on the momentum conservation are depicted in Figure \ref{fig:propeller_graddiv}(c) and (d). Results demonstrate that the conservation of the total linear and angular momentum is not disturbed by the addition of grad-div stabilization.

\subsection{Twisting column}
\label{sec:twisting-column}
In the third example, we consider a twisting column problem originally proposed in \cite{Janz2019}. Its setting is summarized in Table \ref{table:column}. The motion is initiated by an initial velocity field, and no traction boundary condition is applied on $\Gamma_{\bm X}=\Gamma^{H}_{\bm X}$. The problem is studied up to $T=5$ with the time step size $\Delta t = 0.01$. 

\begin{table}[htbp]
  \centering
  \begin{tabular}{ m{.4\textwidth}   m{.5\textwidth} }
    \hline
    \begin{minipage}{.35\textwidth}
      \includegraphics[trim=220 170 185 280, clip=true, scale = 0.65]{./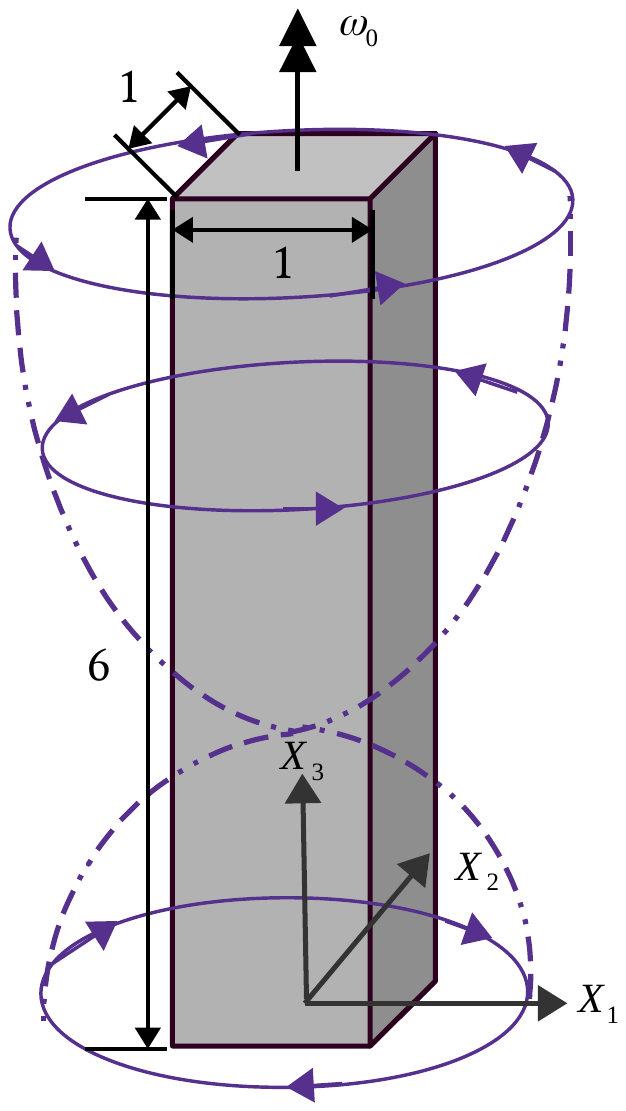}
    \end{minipage}
    &
    \begin{minipage}{.45\textwidth}
      \begin{itemize}
        \item[] Material properties:
        \item[] $\rho_0 = 1.0\times 10^3, \quad N = 3$,
        \item[] $G_{\mathrm{ich}}( \tilde{\lambda}_1, \tilde{\lambda}_2, \tilde{\lambda}_3 ) = \sum\limits_{a=1}^{3} \sum\limits_{p=1}^{N} \frac{\mu_p}{\alpha_p}( \tilde{\lambda}_{a}^{\alpha_p} - 1 )$,
        \item[] $\alpha_1 = 1.3, \quad \mu_1 = 6.3\times 10^5$,
        \item[] $\alpha_2 = 5.0, \quad \mu_2 = 1.2\times 10^3$,
        \item[] $\alpha_3 = -2.0, \quad \mu_3 = -1.0\times 10^4$.
        \item[] Initial velocity:
        \item[] $\bm{V}_0 = \bm{\omega}_0 \times \bm{X}$,
        \item[] $\bm{\omega}_0 = \left[ 0,~0,~\Omega_1 \sin \left( \frac{\pi \left(X_3-0.5L \right)}{2L} \right) + \Omega_2  \right]$,
        \item[] $\Omega_1 = 20$, $\Omega_2 = 5$, $L=6$.  
      \end{itemize}
    \end{minipage}   
\\    
    \hline
  \end{tabular}
    \caption{The three-dimensional twisting column: problem setting.} 
\label{table:column}
\end{table}

\begin{figure}
\begin{center}
\begin{tabular}{cccccccc}
\includegraphics[angle=0, trim=450 120 450 0, clip=true, scale = 0.045]{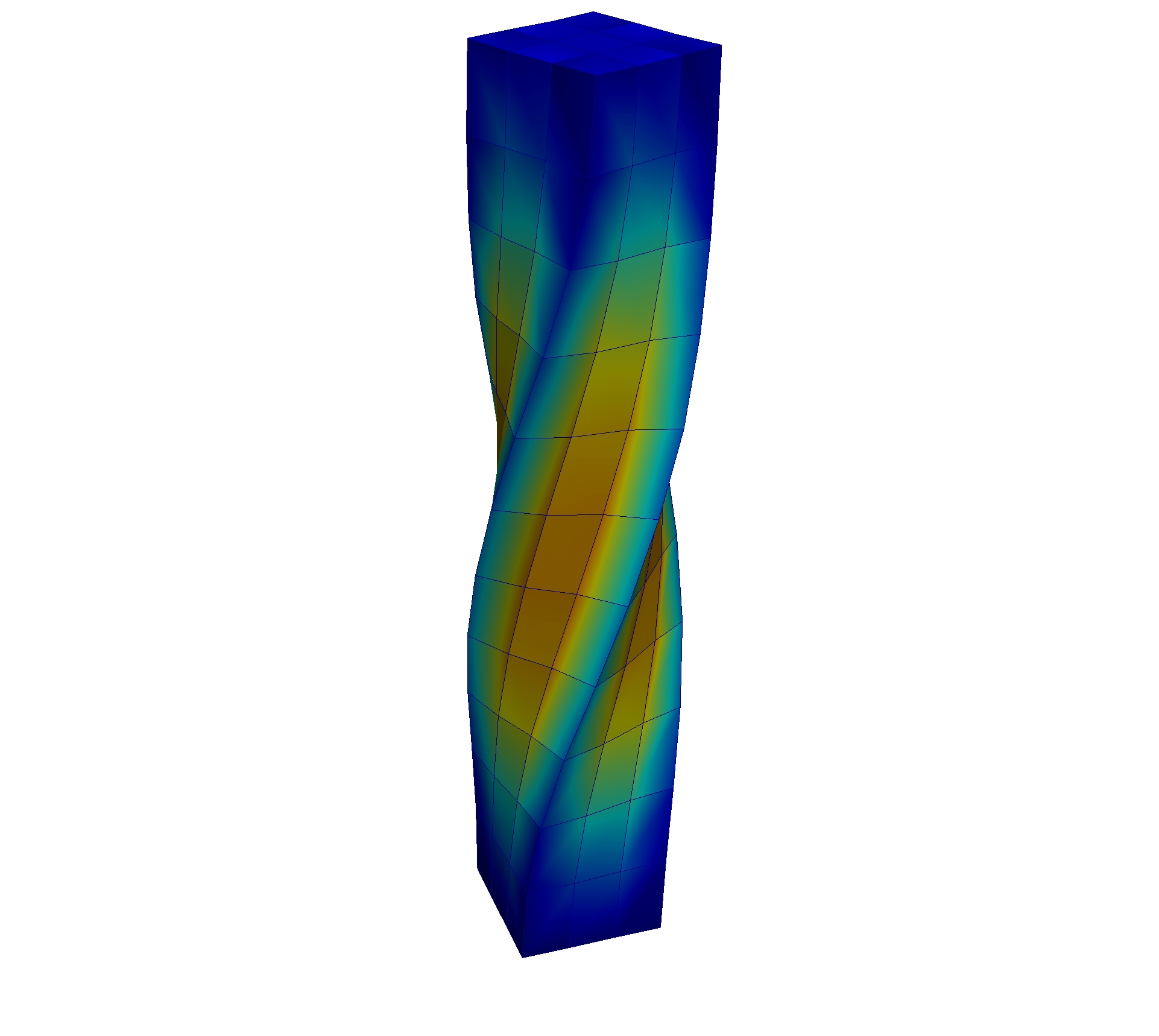} &
\includegraphics[angle=0, trim=450 120 450 0, clip=true, scale = 0.045]{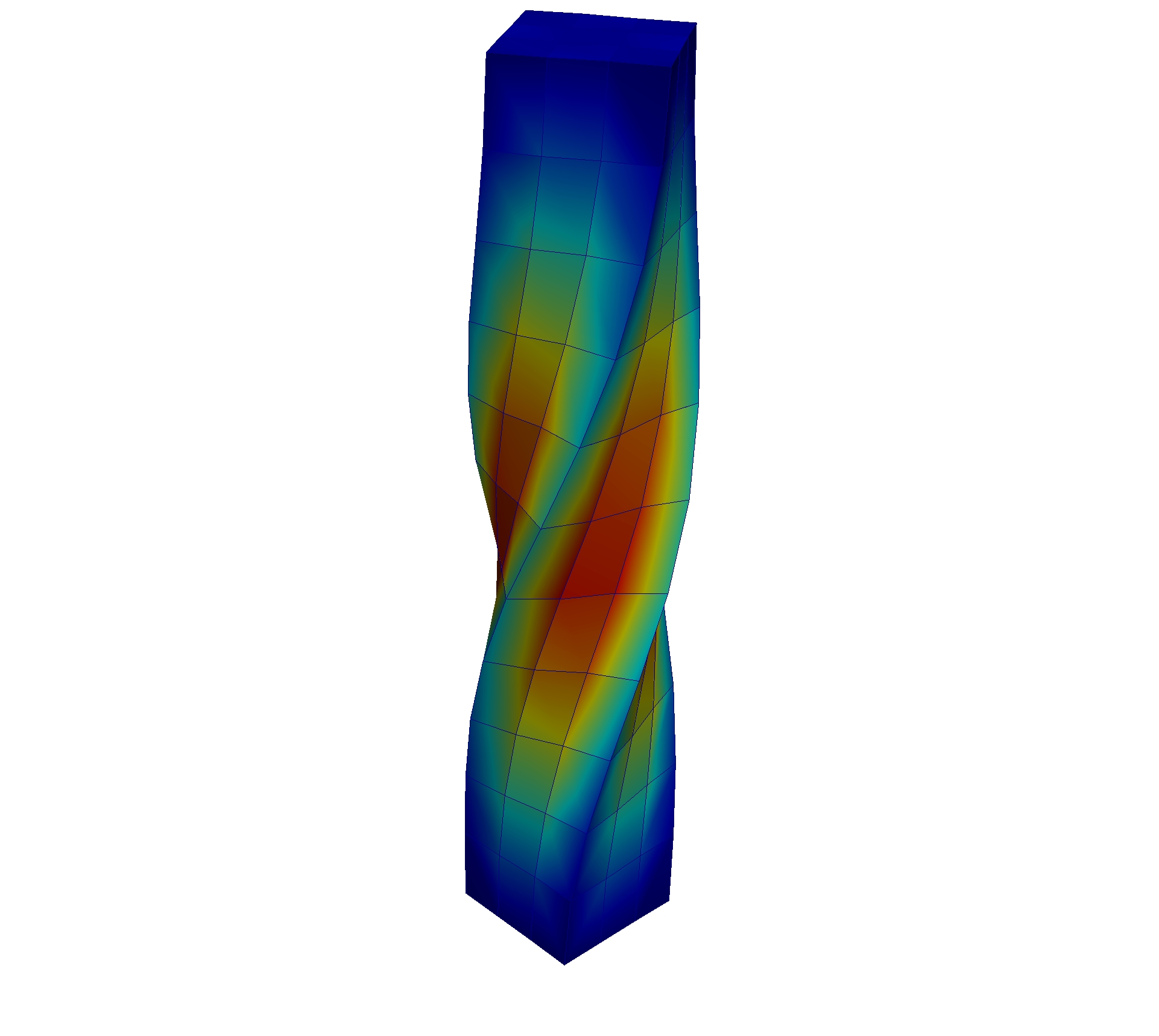} &
\includegraphics[angle=0, trim=450 120 450 0, clip=true, scale = 0.045]{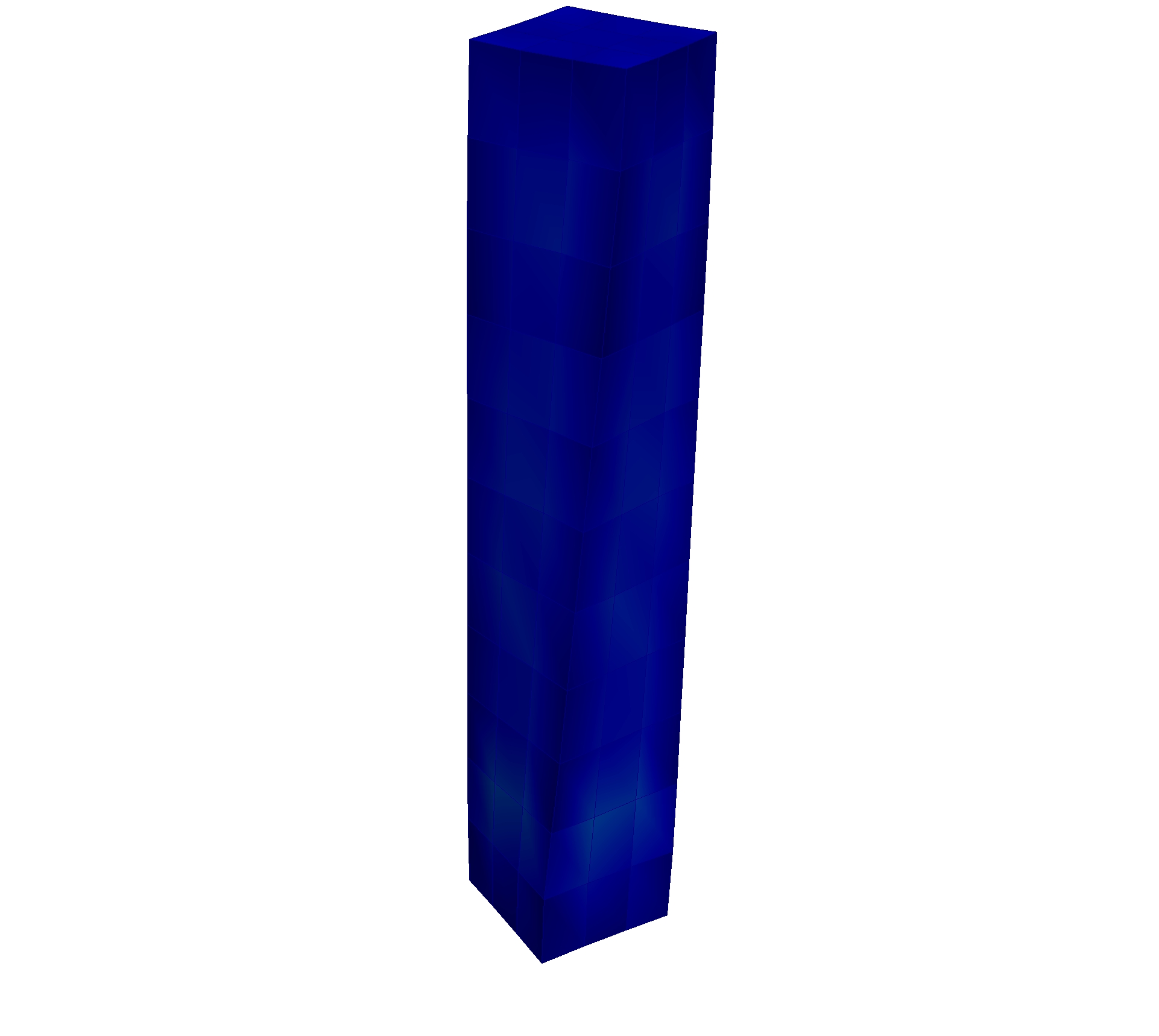} &
\includegraphics[angle=0, trim=450 120 450 0, clip=true, scale = 0.045]{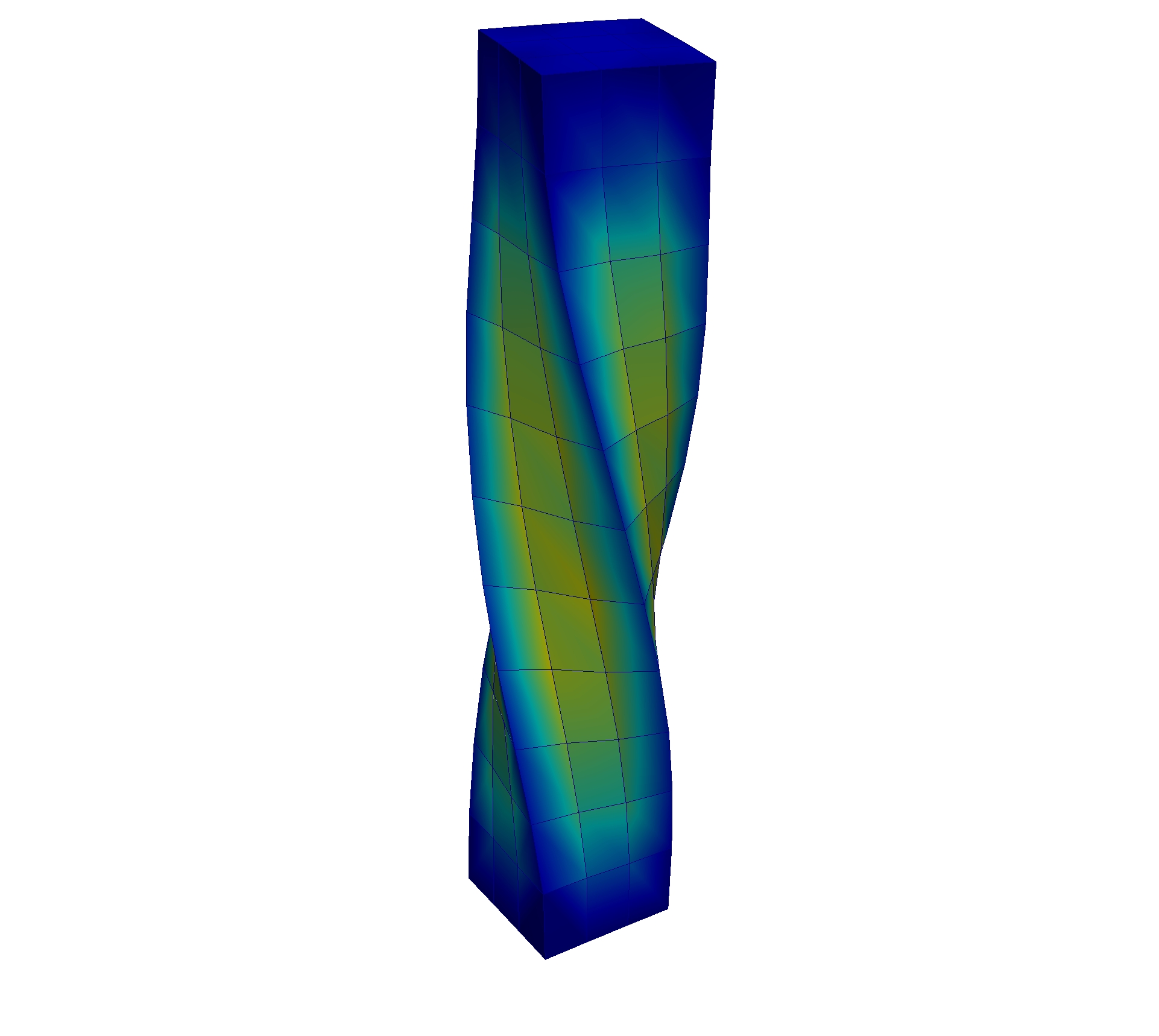} &
\includegraphics[angle=0, trim=450 120 450 0, clip=true, scale = 0.045]{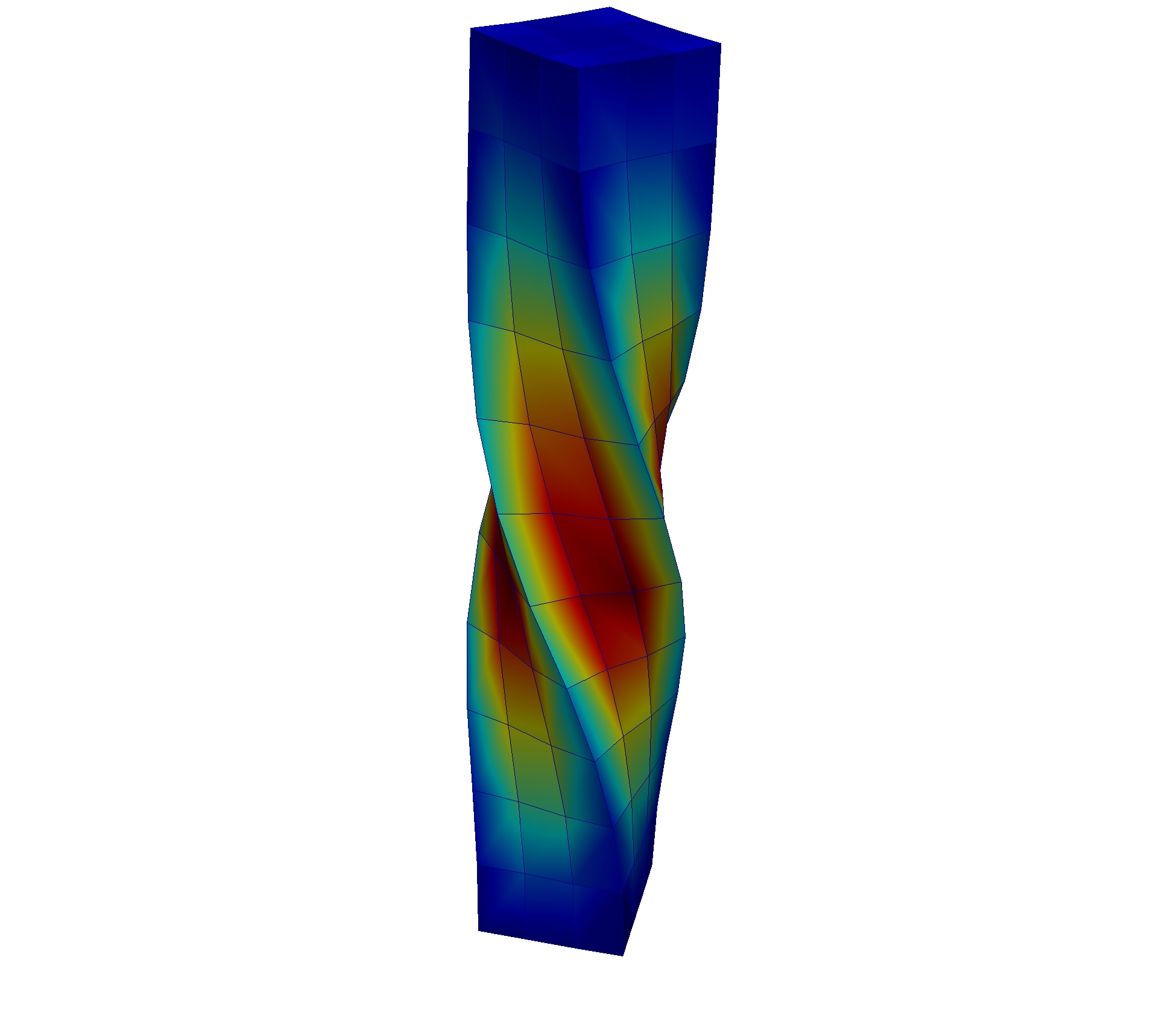} &
\includegraphics[angle=0, trim=450 120 450 0, clip=true, scale = 0.045]{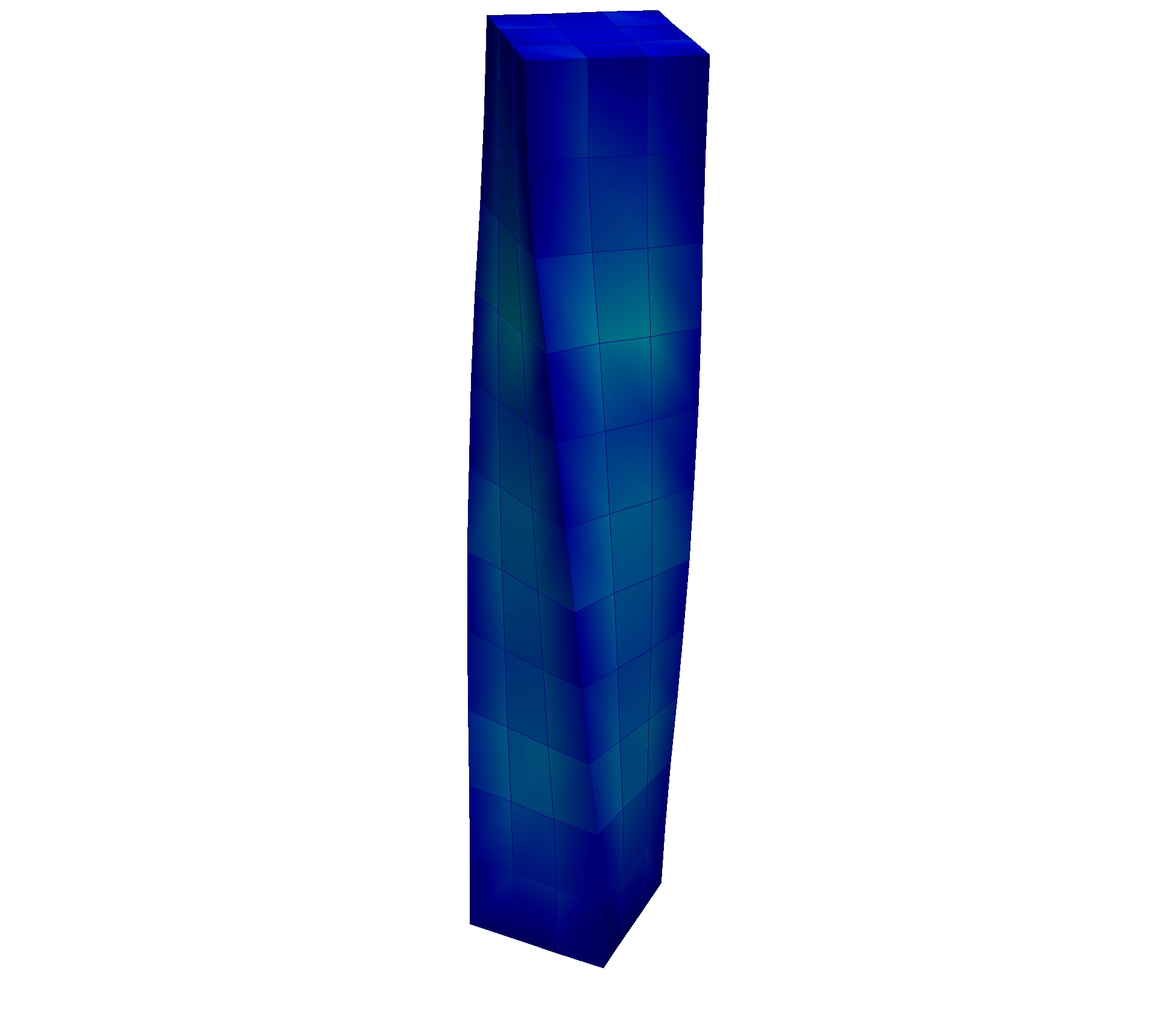} &
\includegraphics[angle=0, trim=450 120 450 0, clip=true, scale = 0.045]{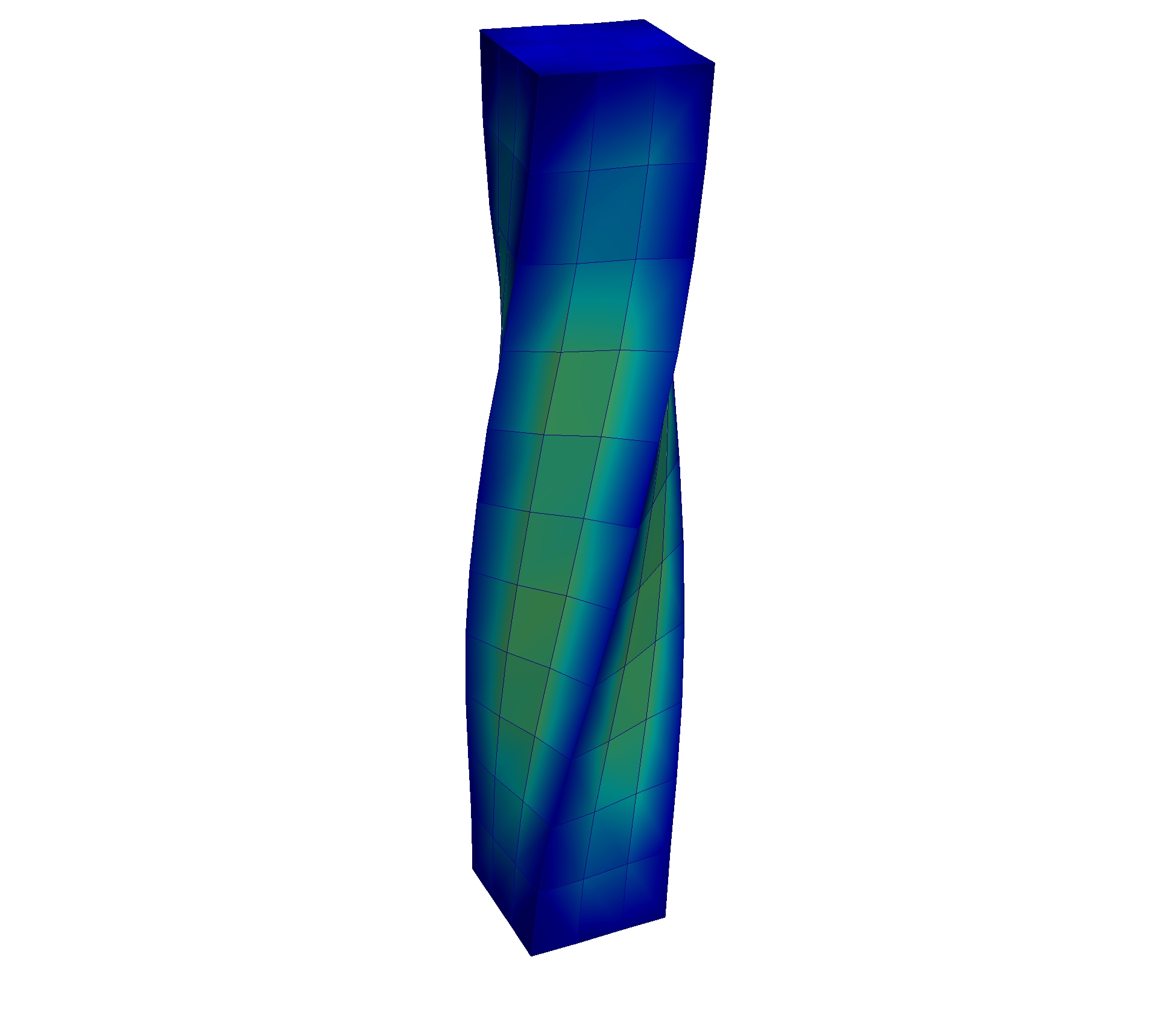} &
\includegraphics[angle=0, trim=450 120 450 0, clip=true, scale = 0.045]{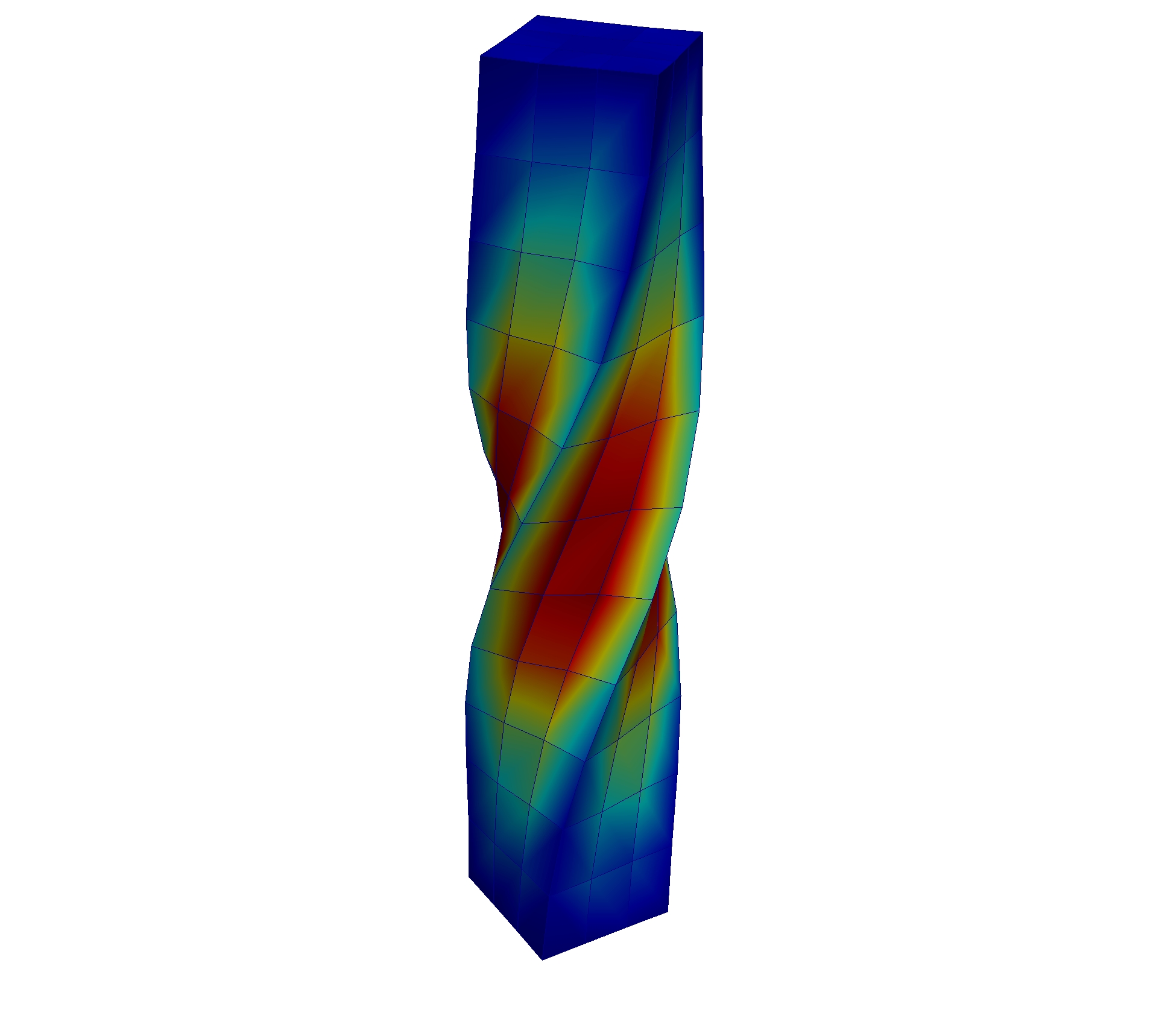} \\
\includegraphics[angle=0, trim=450 120 450 0, clip=true, scale = 0.045]{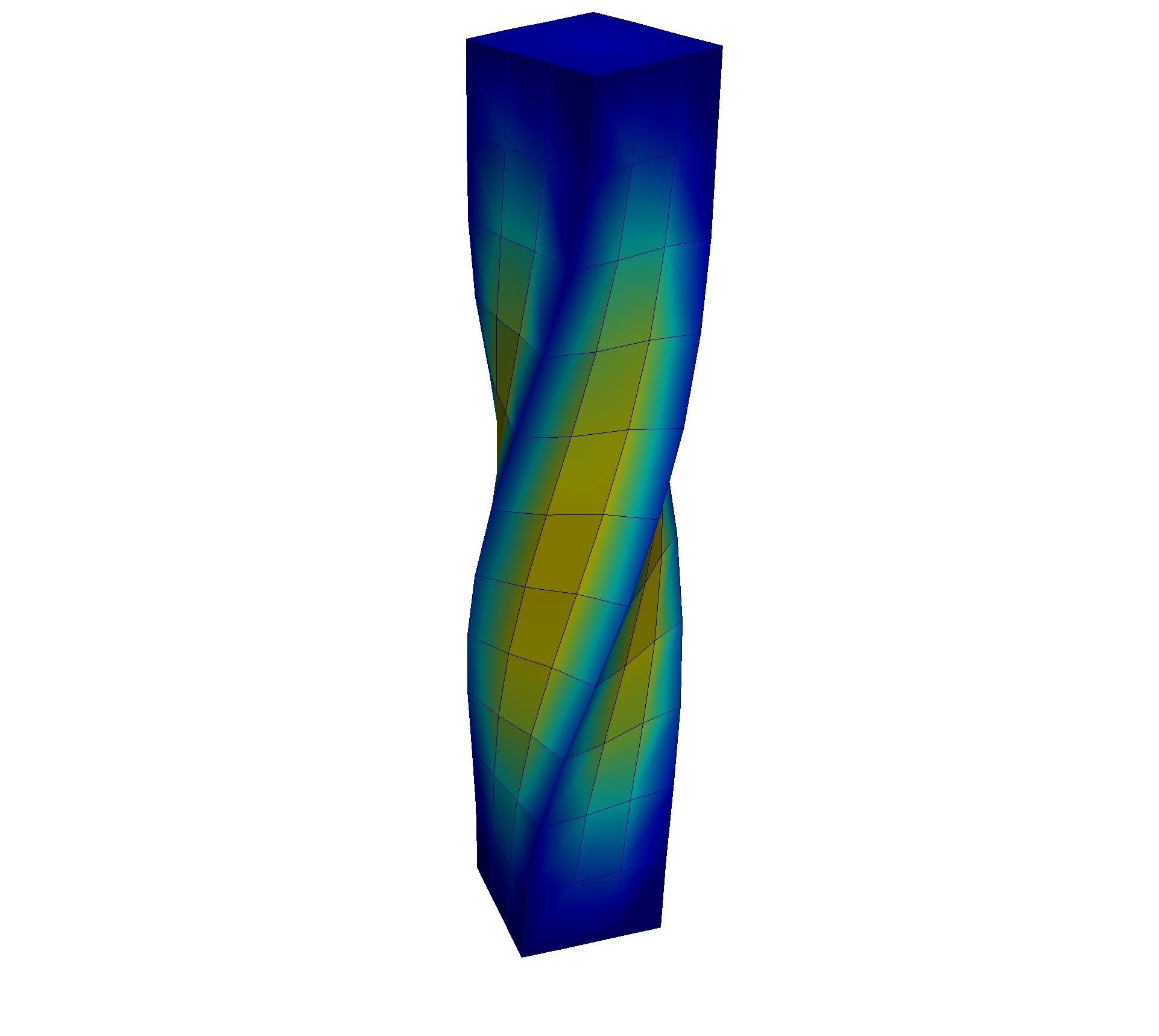} &
\includegraphics[angle=0, trim=450 120 450 0, clip=true, scale = 0.045]{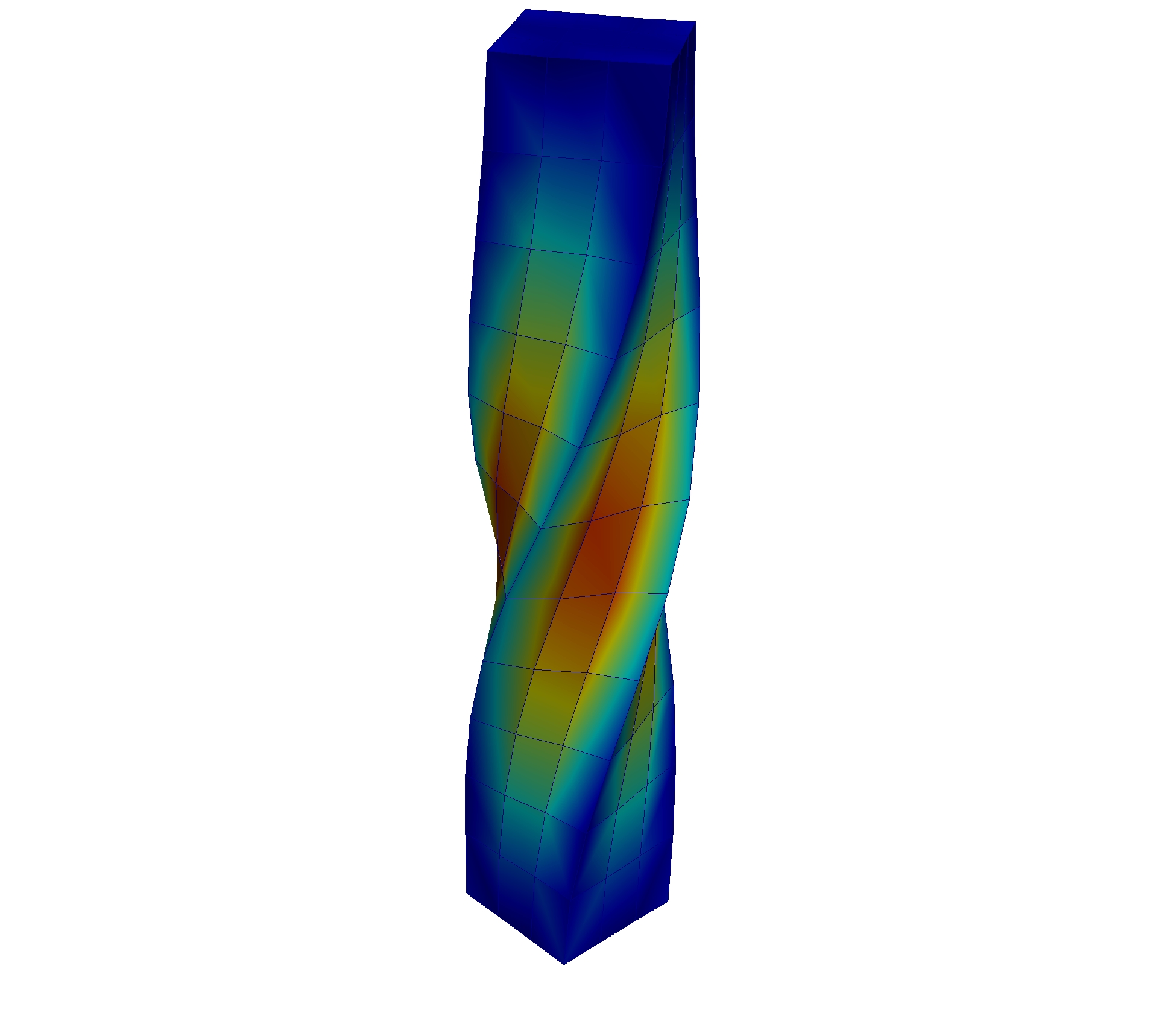} &
\includegraphics[angle=0, trim=450 120 450 0, clip=true, scale = 0.045]{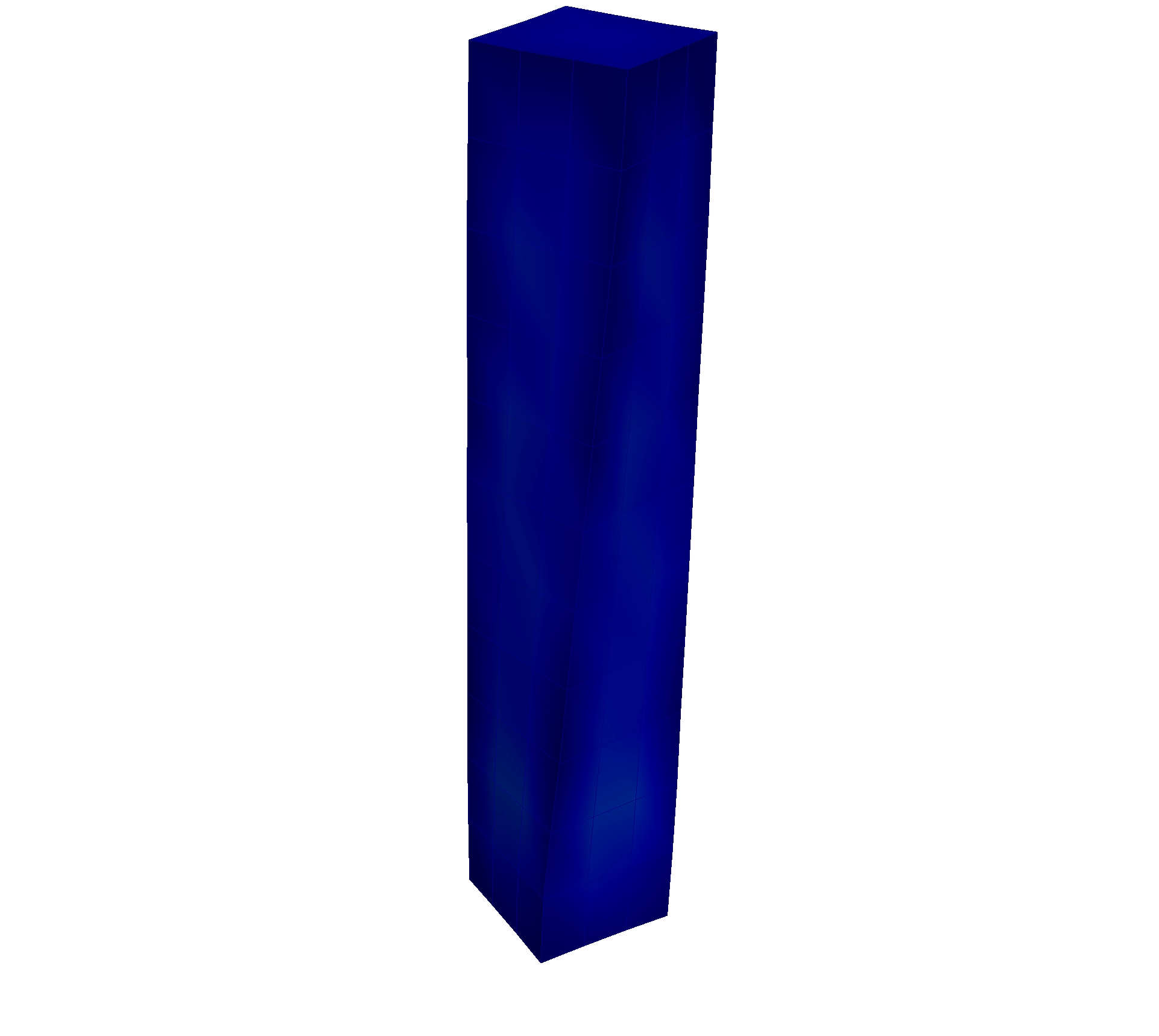} &
\includegraphics[angle=0, trim=450 120 450 0, clip=true, scale = 0.045]{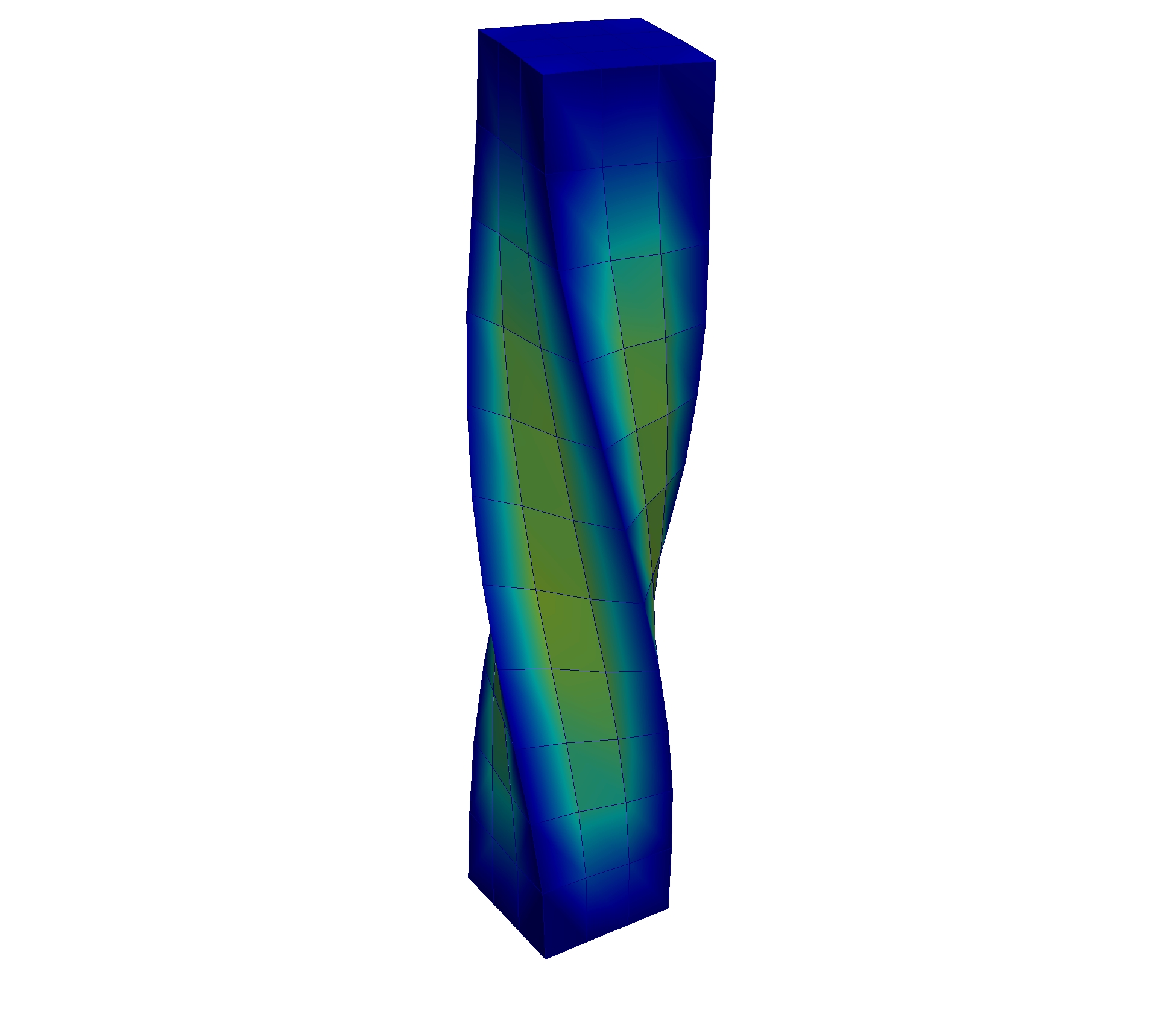} &
\includegraphics[angle=0, trim=450 120 450 0, clip=true, scale = 0.045]{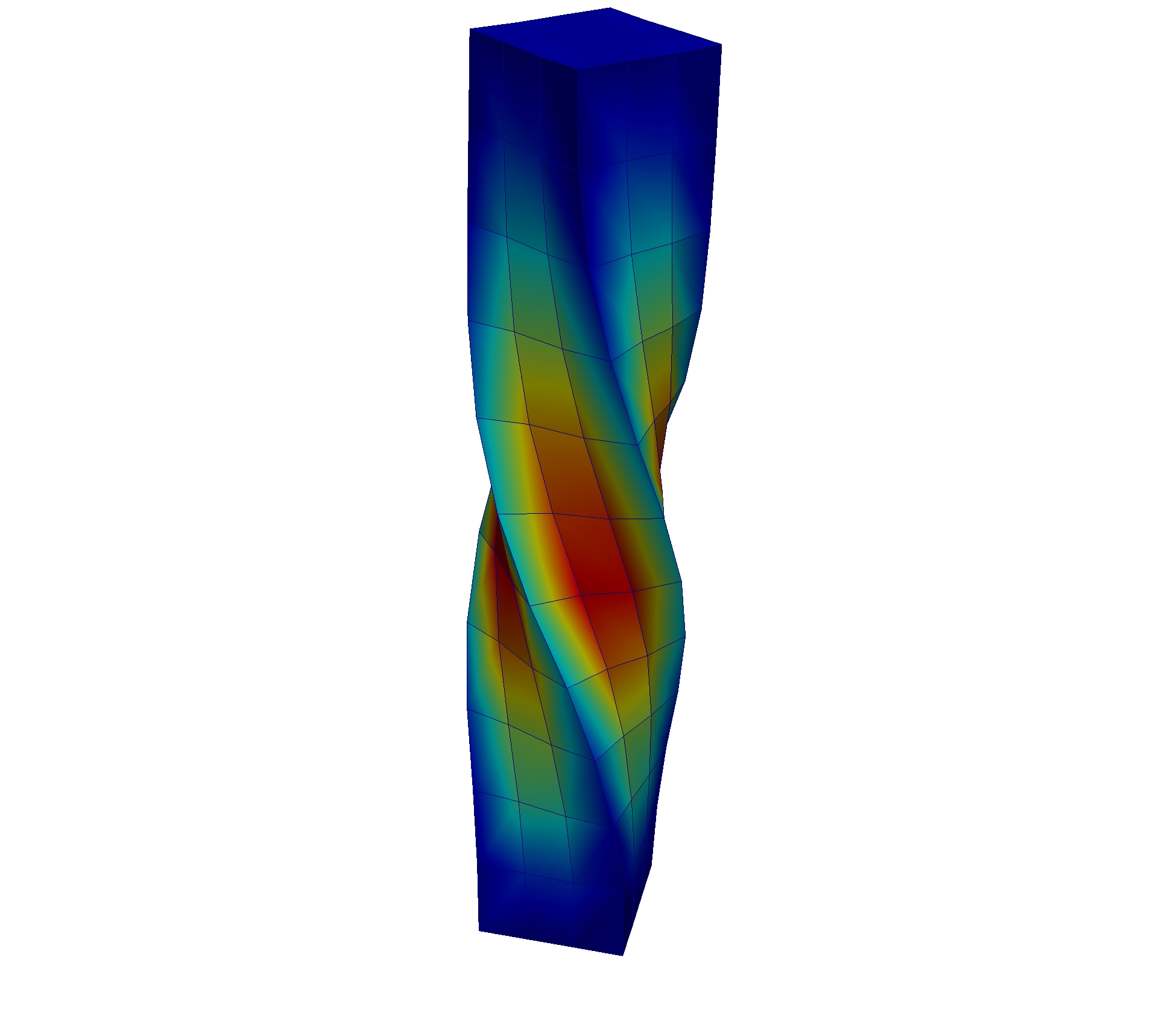} &
\includegraphics[angle=0, trim=450 120 450 0, clip=true, scale = 0.045]{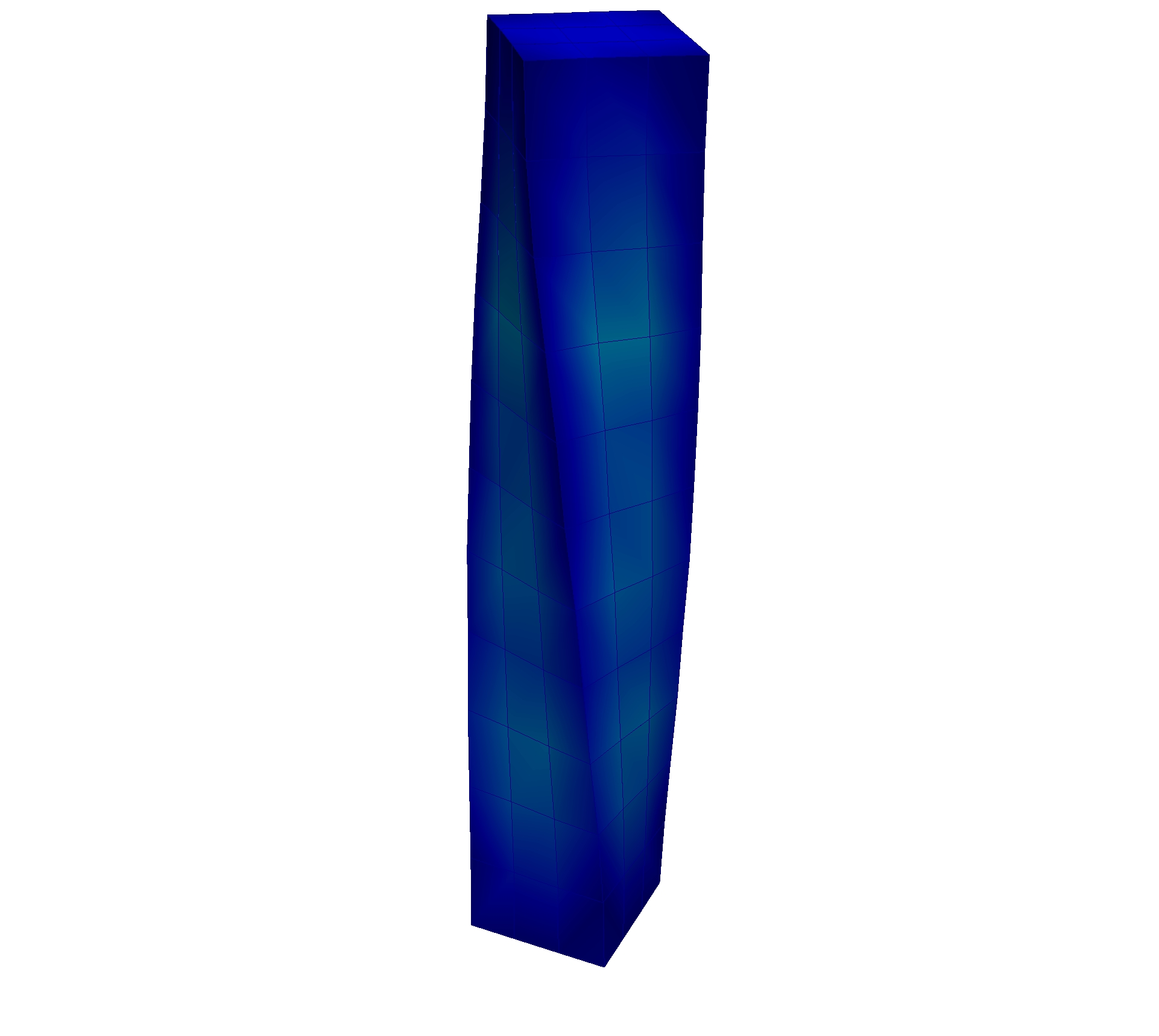} &
\includegraphics[angle=0, trim=450 120 450 0, clip=true, scale = 0.045]{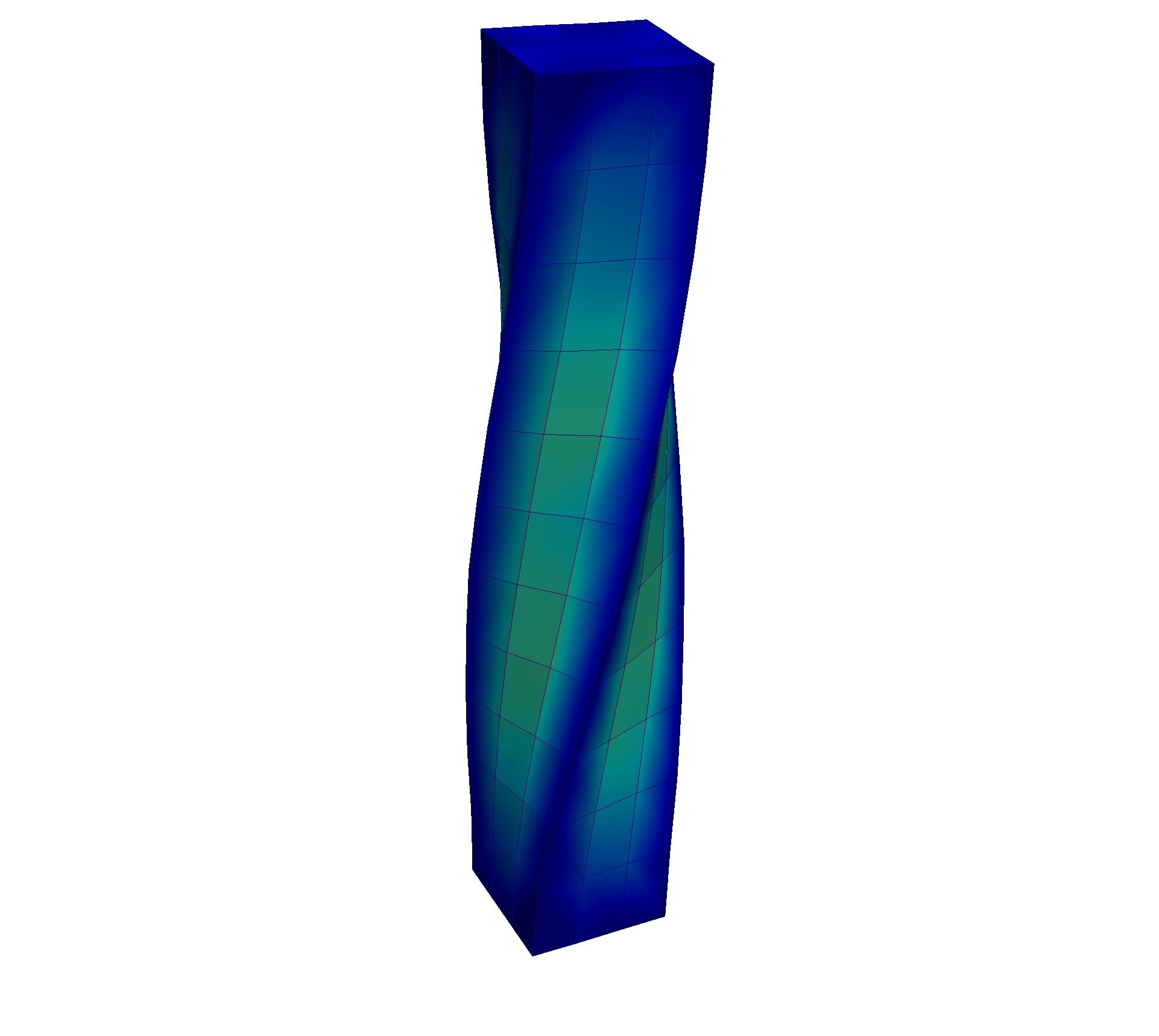} &
\includegraphics[angle=0, trim=450 120 450 0, clip=true, scale = 0.045]{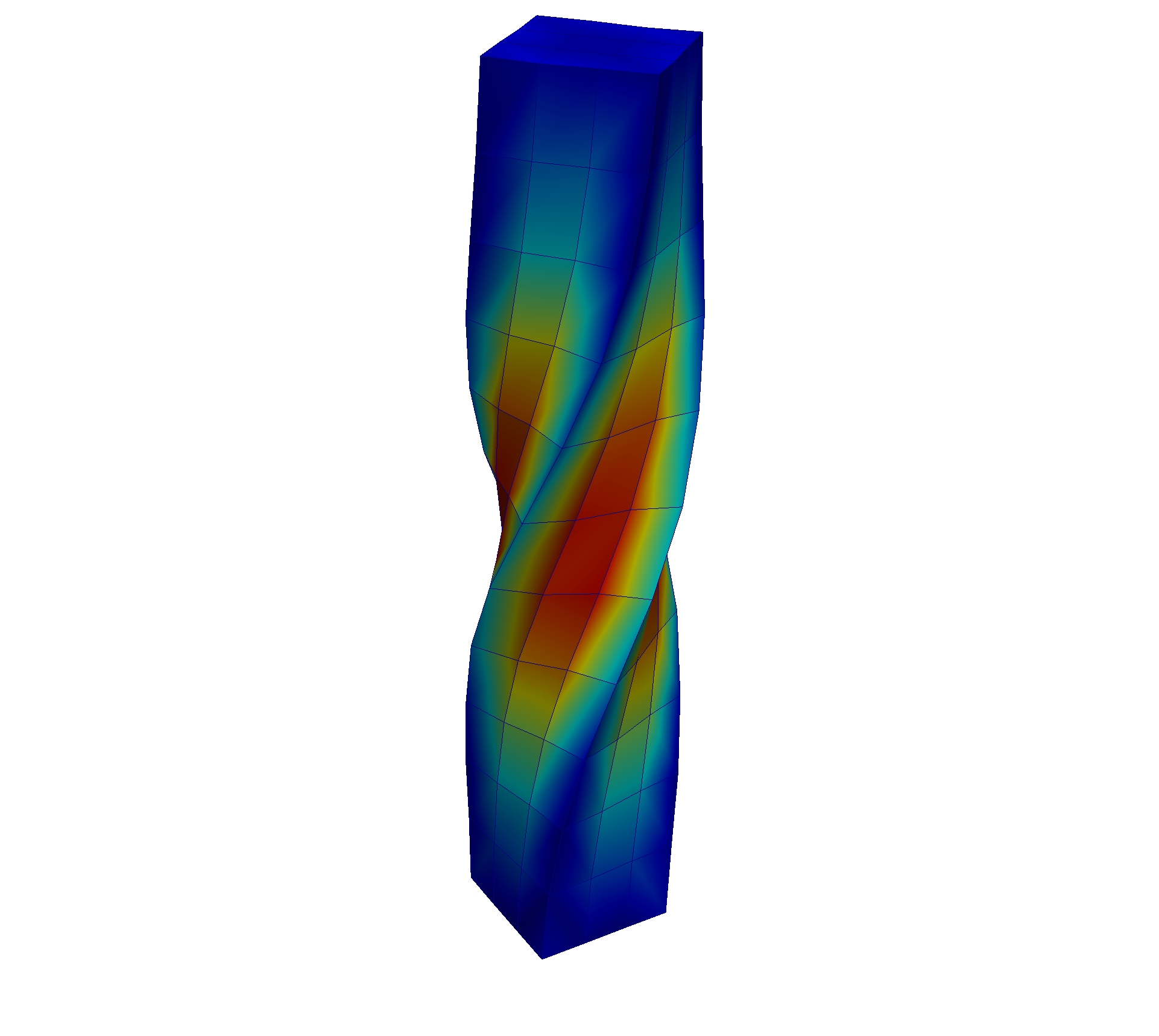} \\
t = 0.1 & 0.2 & 0.3 & 0.4 & 0.5 & 0.6 & 0.7 & 0.8 \\
\multicolumn{8}{c}{ \includegraphics[angle=0, trim=300 650 300 700, clip=true, scale = 0.25]{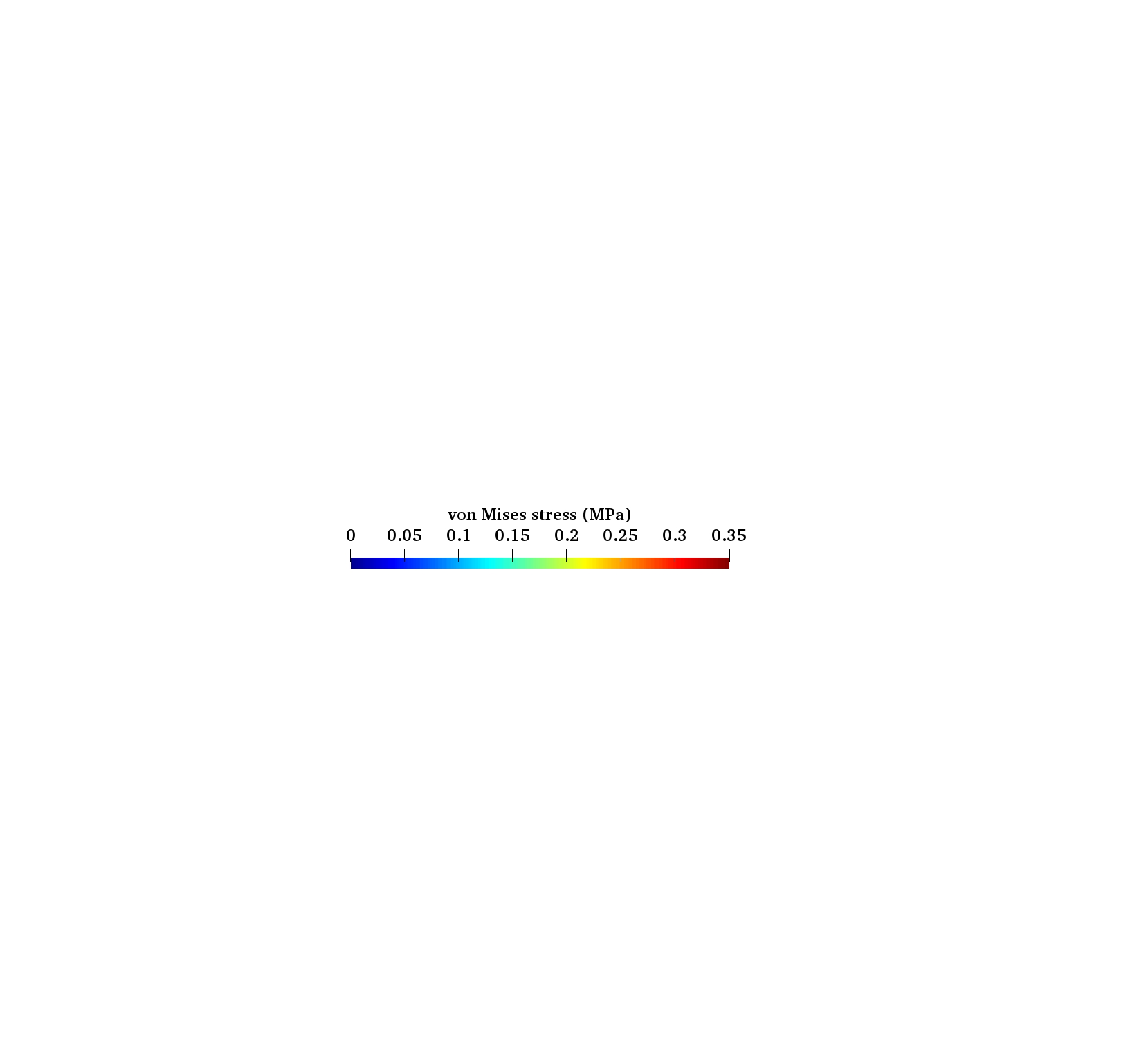} }
\end{tabular}
\caption{Snapshots of the von Mises stress on the current configuration using the coarse mesh (top) and fine mesh (bottom) with $\Delta t = 0.01$. The coarse mesh consists of $3 \times 3 \times 11$ elements with $\mathsf{p}=1$; the fine mesh consists of $3 \times 3 \times 11$ elements with $\mathsf{p}=2$.}
\label{fig:column_deformation}
\end{center}
\end{figure}

\begin{figure}
\begin{center}
\begin{tabular}{cc}
\includegraphics[angle=0, trim=50 180 100 100, clip=true, scale = 0.21]{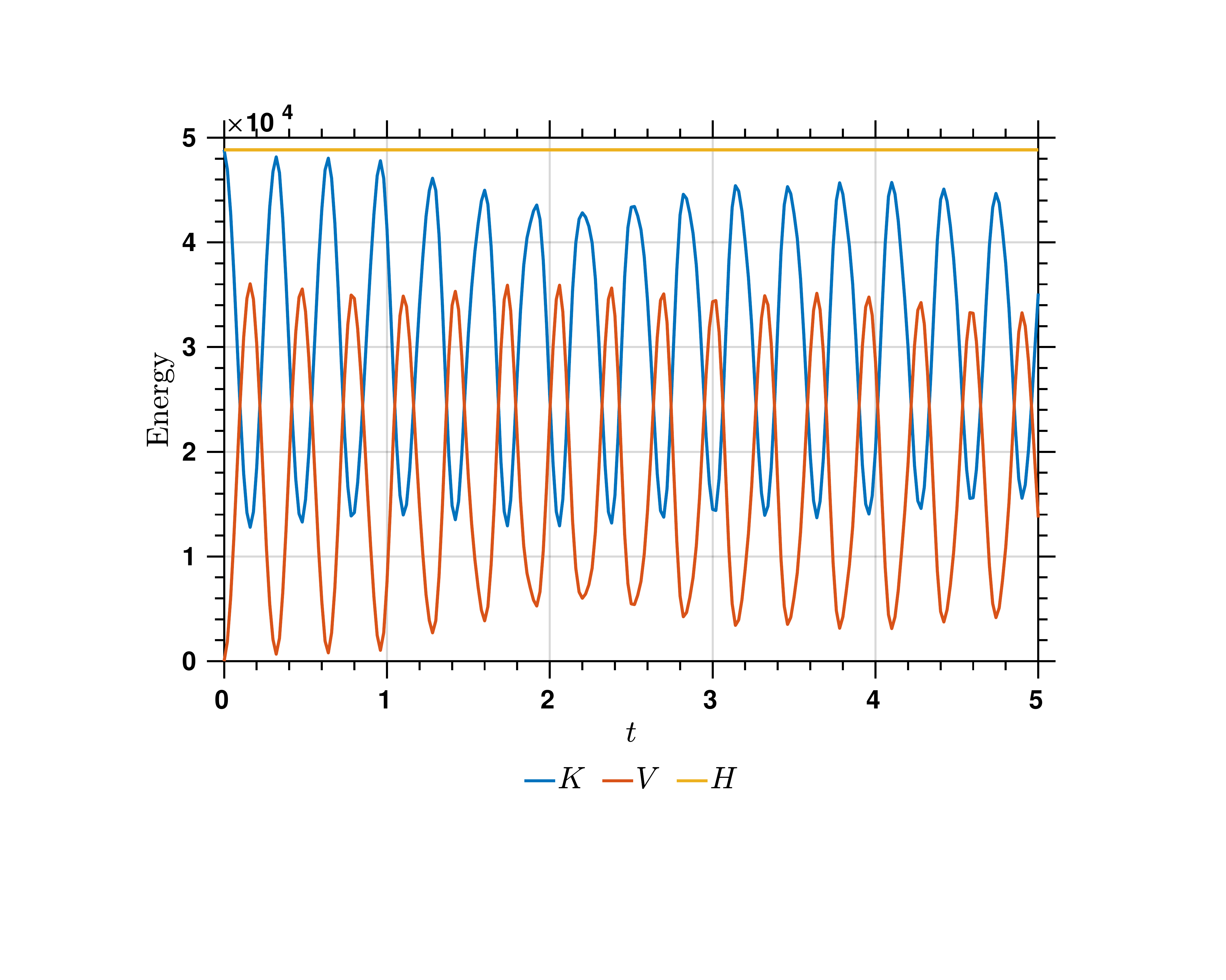} &
\includegraphics[angle=0, trim=50 180 100 100, clip=true, scale = 0.21]{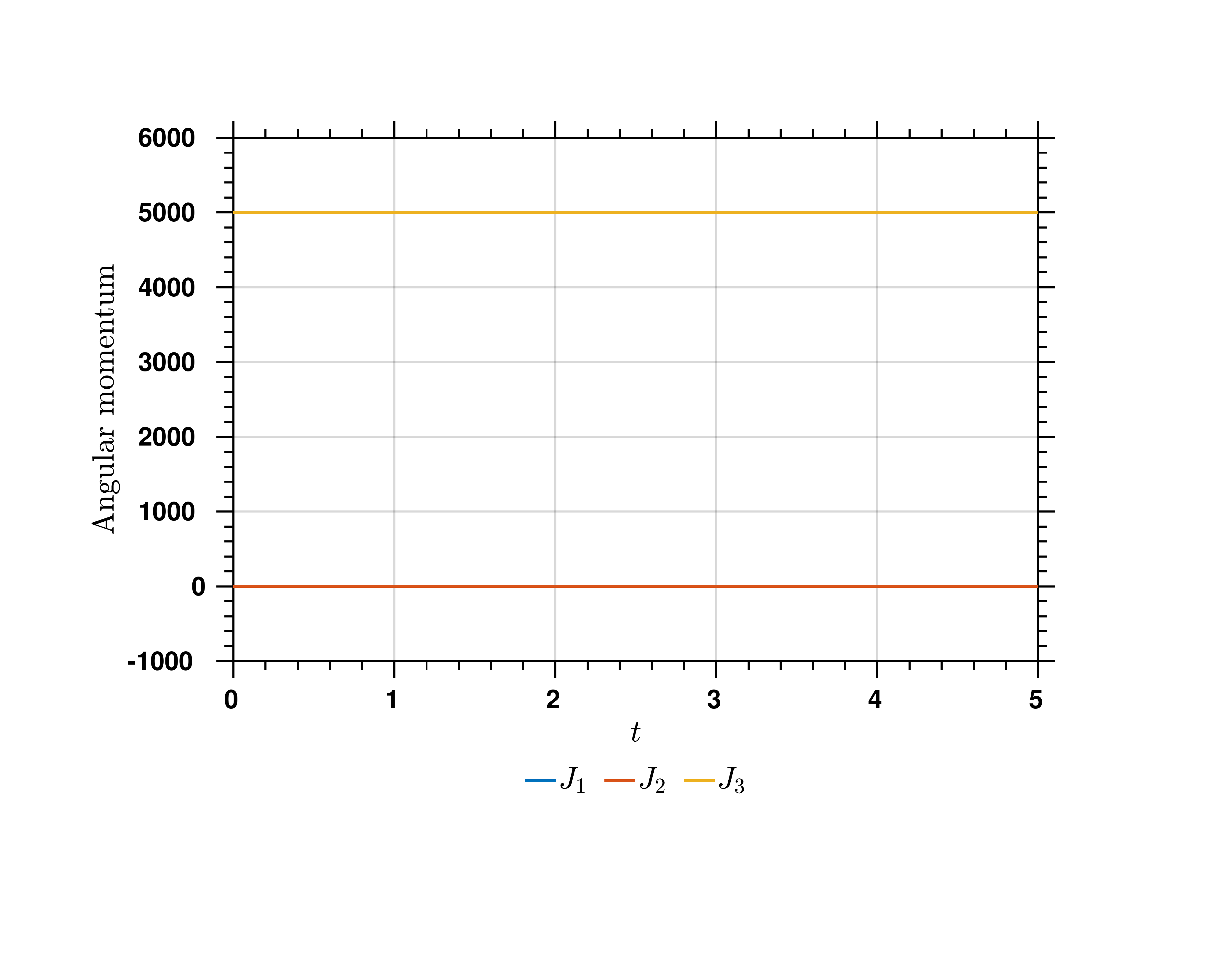} \\
\end{tabular}
\caption{The time histories of the total Hamiltonian (left) and angular momentum (right) with $T=5$ and $\Delta t = 0.01$.}
\label{fig:column_energy_momenta}
\end{center}
\end{figure}

\begin{figure}
\begin{center}
\begin{tabular}{cc}
\includegraphics[angle=0, trim=50 85 100 120, clip=true, scale = 0.21]{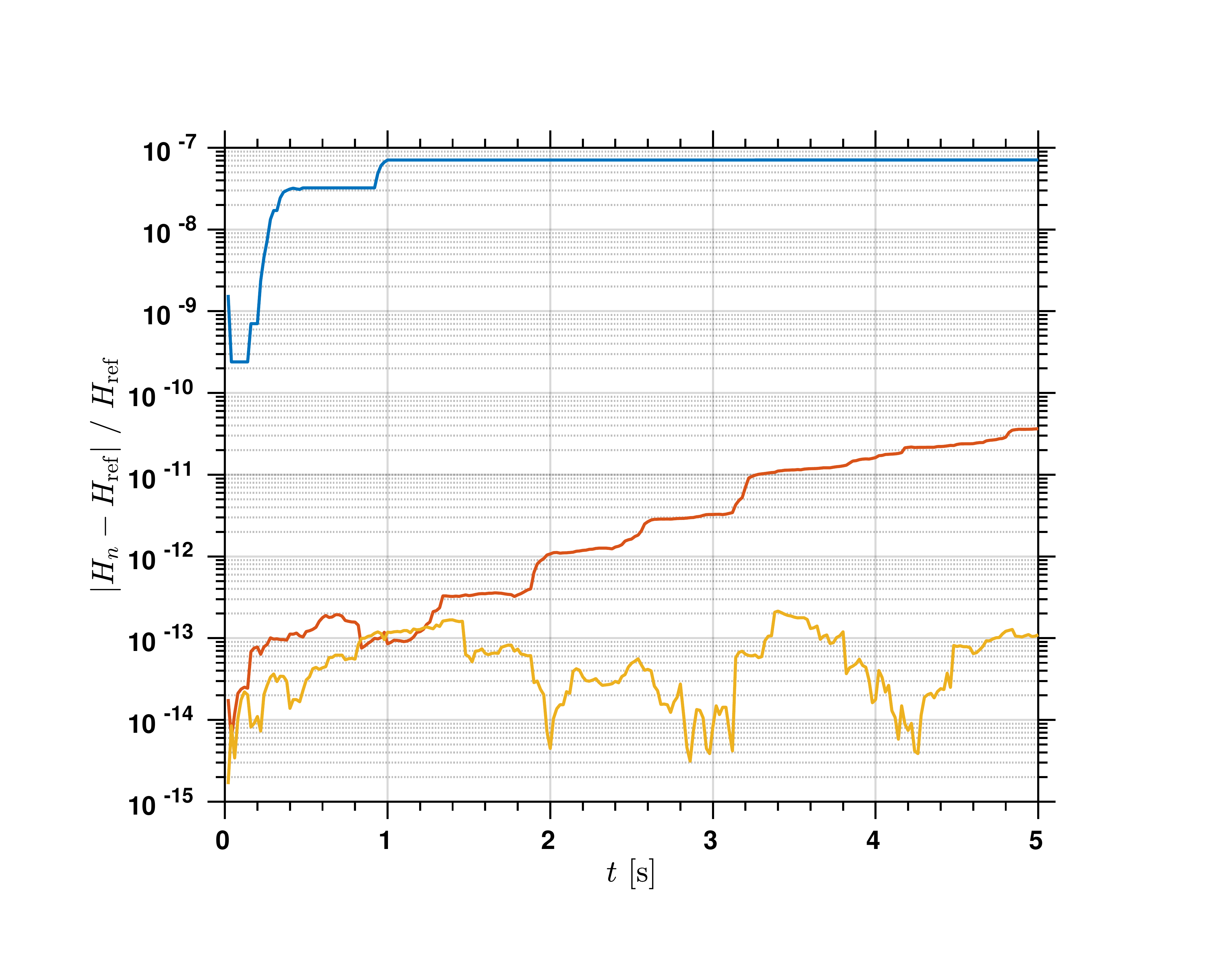} &
\includegraphics[angle=0, trim=50 85 100 120, clip=true, scale = 0.21]{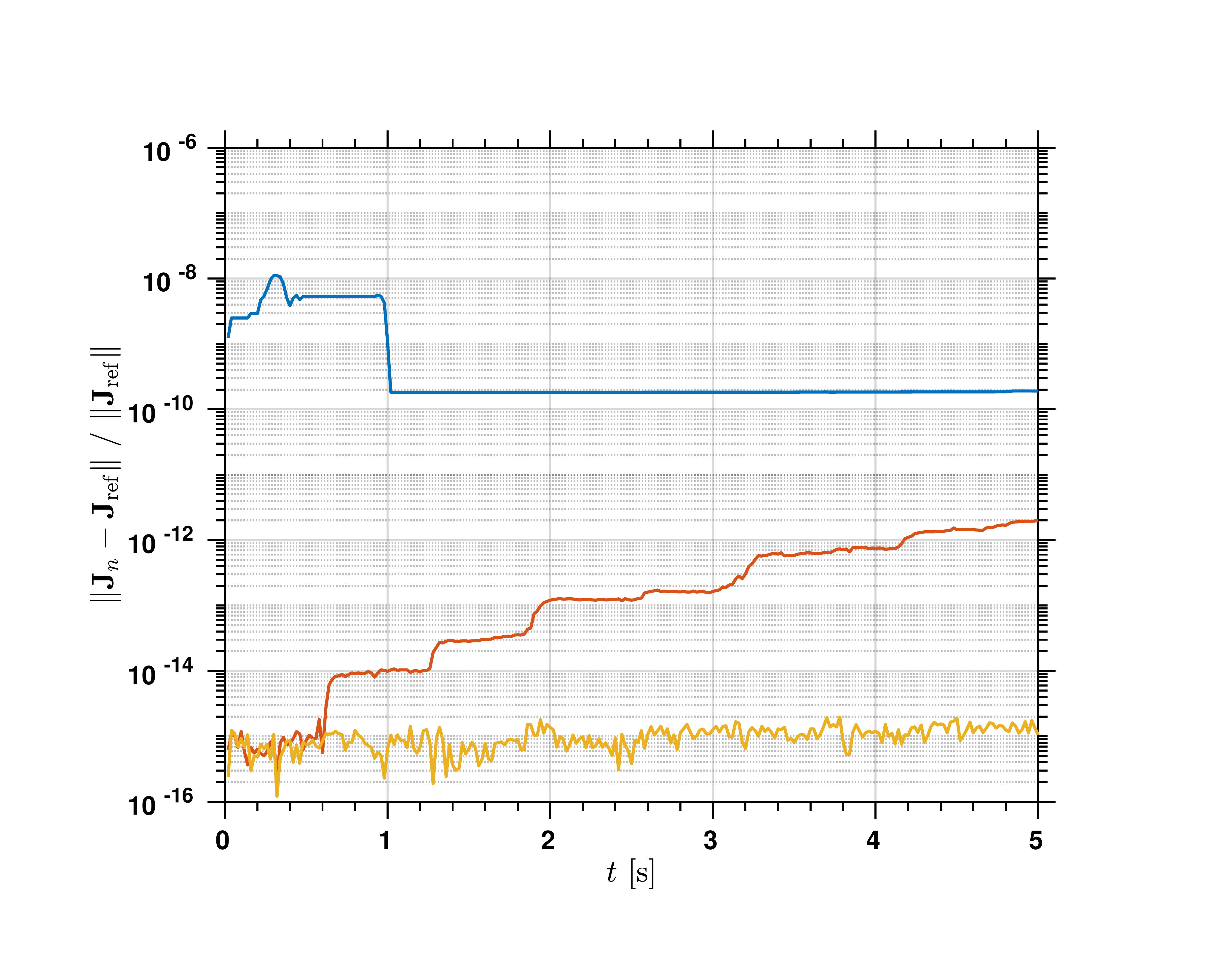} \\
\multicolumn{2}{c}{ \includegraphics[angle=0, trim=50 175 100 735, clip=true, scale = 0.30]{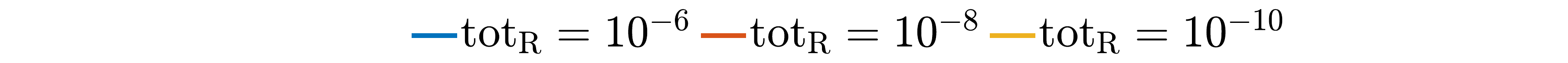} }
\end{tabular}
\caption{The time histories of the relative errors of the total Hamiltonian (left) and angular momentum (right) with $T=5$ and $\Delta t = 0.01$. The reference values of the quantities are taken at time $t=0$.}
\label{fig:column_energy_momenta_relative_errors}
\end{center}
\end{figure}

\begin{figure}
\begin{center}
\begin{tabular}{cc}
\includegraphics[angle=0, trim=50 85 100 120, clip=true, scale = 0.21]{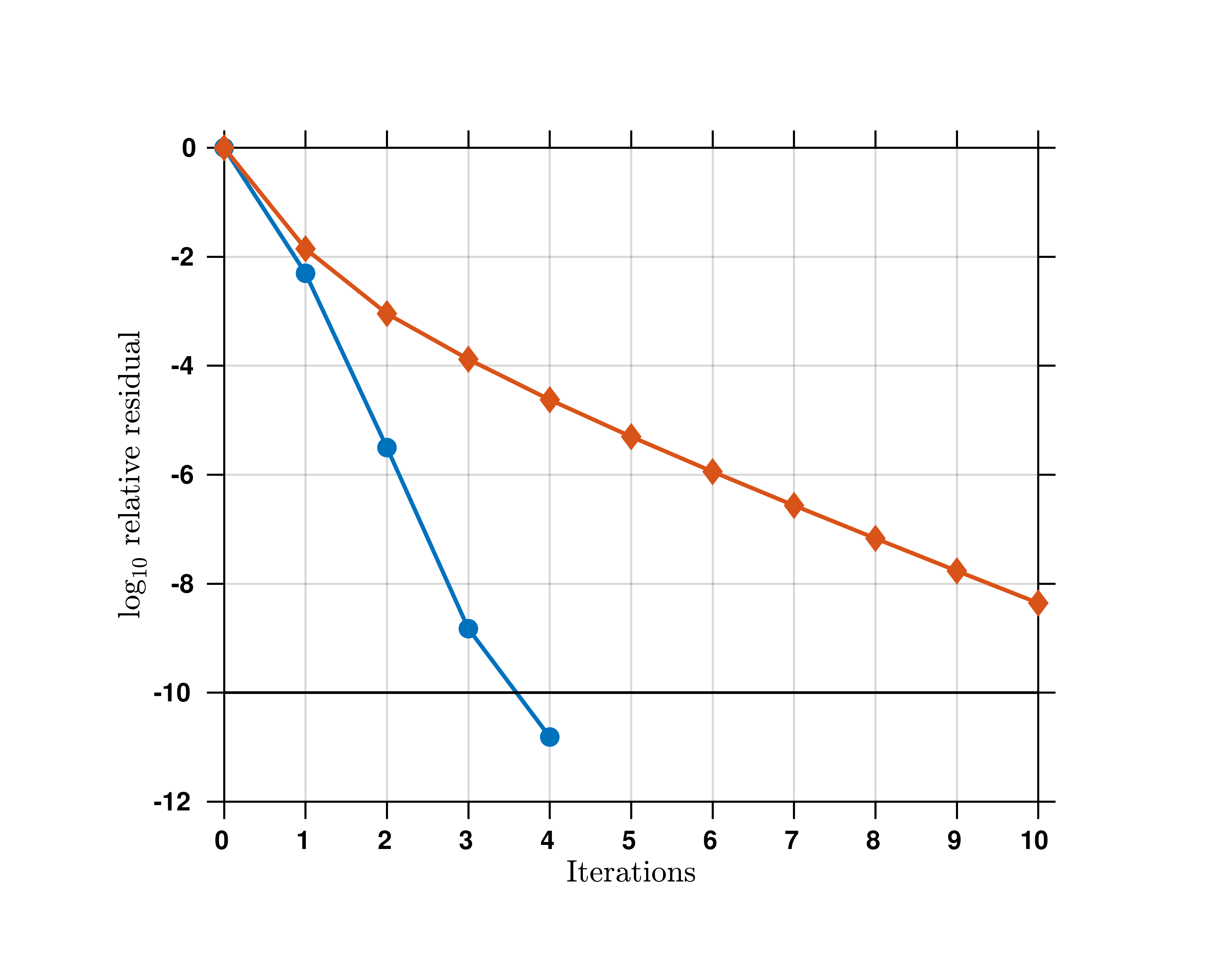} &
\includegraphics[angle=0, trim=50 85 100 120, clip=true, scale = 0.21]{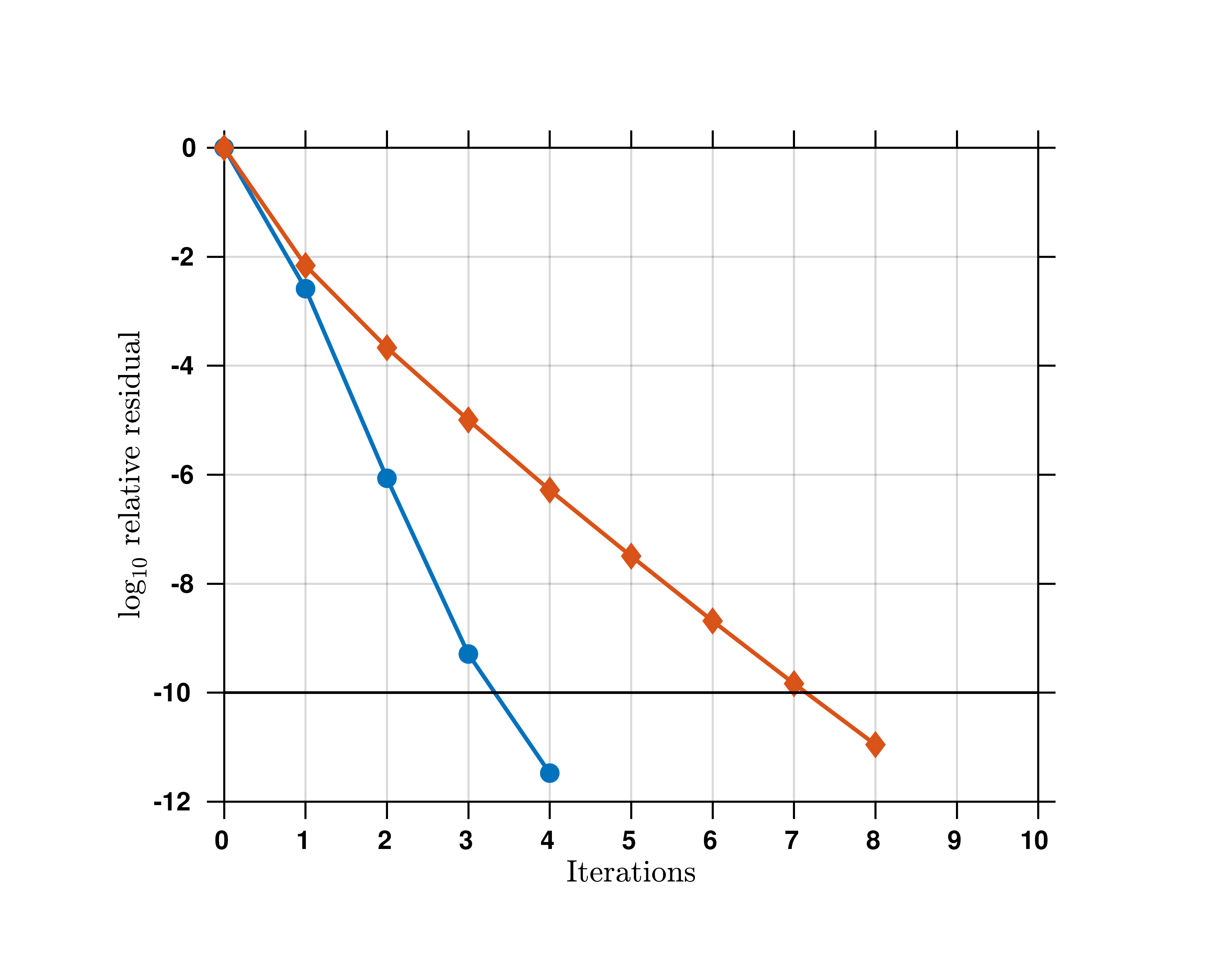} \\
(a) & (b ) \\
\multicolumn{2}{c}{ \includegraphics[angle=0, trim=100 170 100 740, clip=true, scale = 0.30]{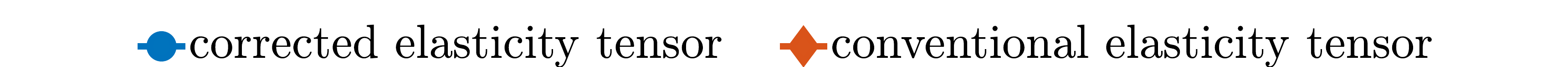} }
\end{tabular}
\caption{Comparison of the two elasticity tensors in the Newton-Raphson solution procedure: (a) the $11$th time step, (b) the $92$nd time step.}
\label{fig:convergence_NT}
\end{center}
\end{figure}

\begin{figure}
\begin{center}
\begin{tabular}{cc}
\includegraphics[angle=0, trim=90 85 120 110, clip=true, scale = 0.22]{./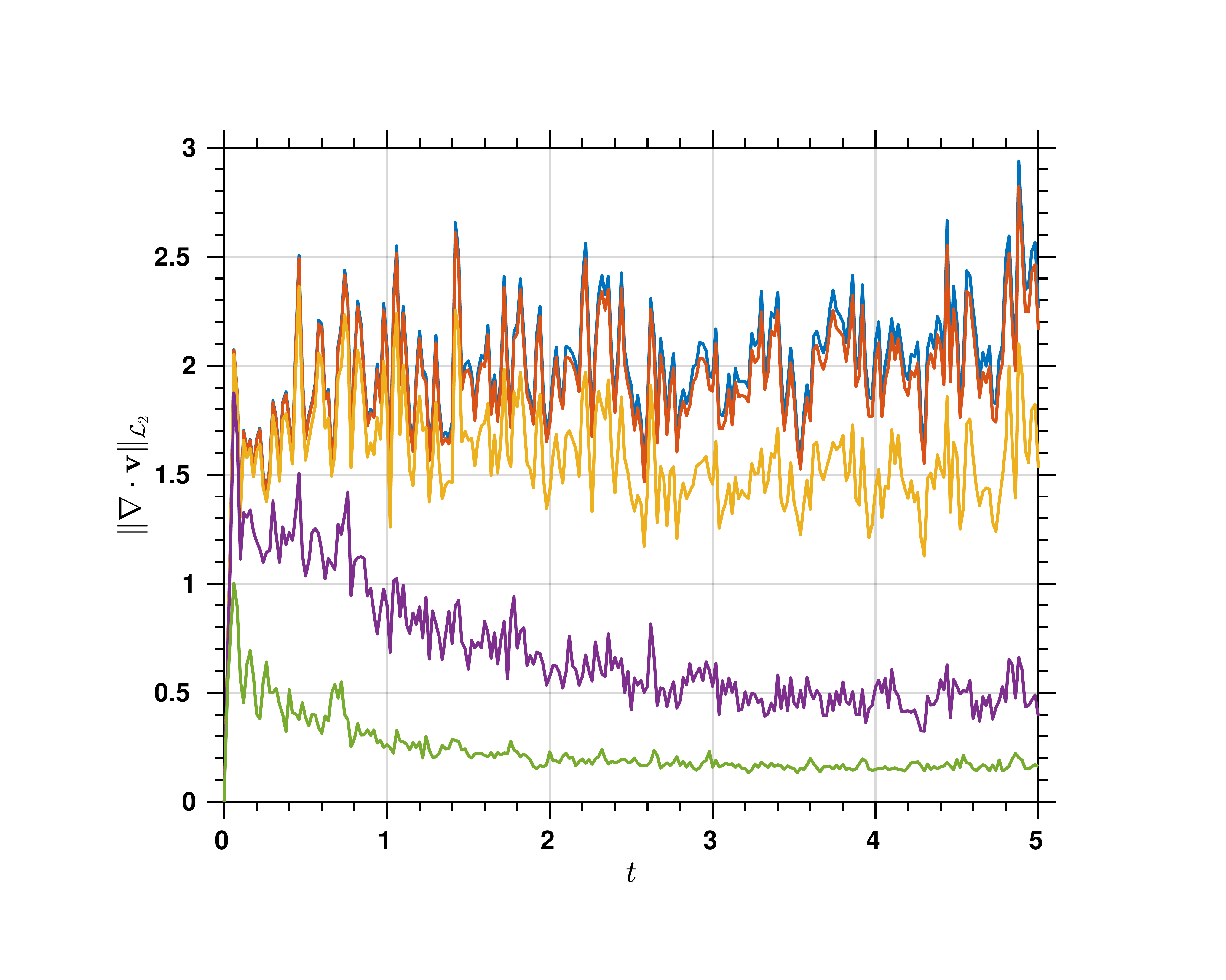} &
\includegraphics[angle=0, trim=90 85 120 110, clip=true, scale = 0.22]{./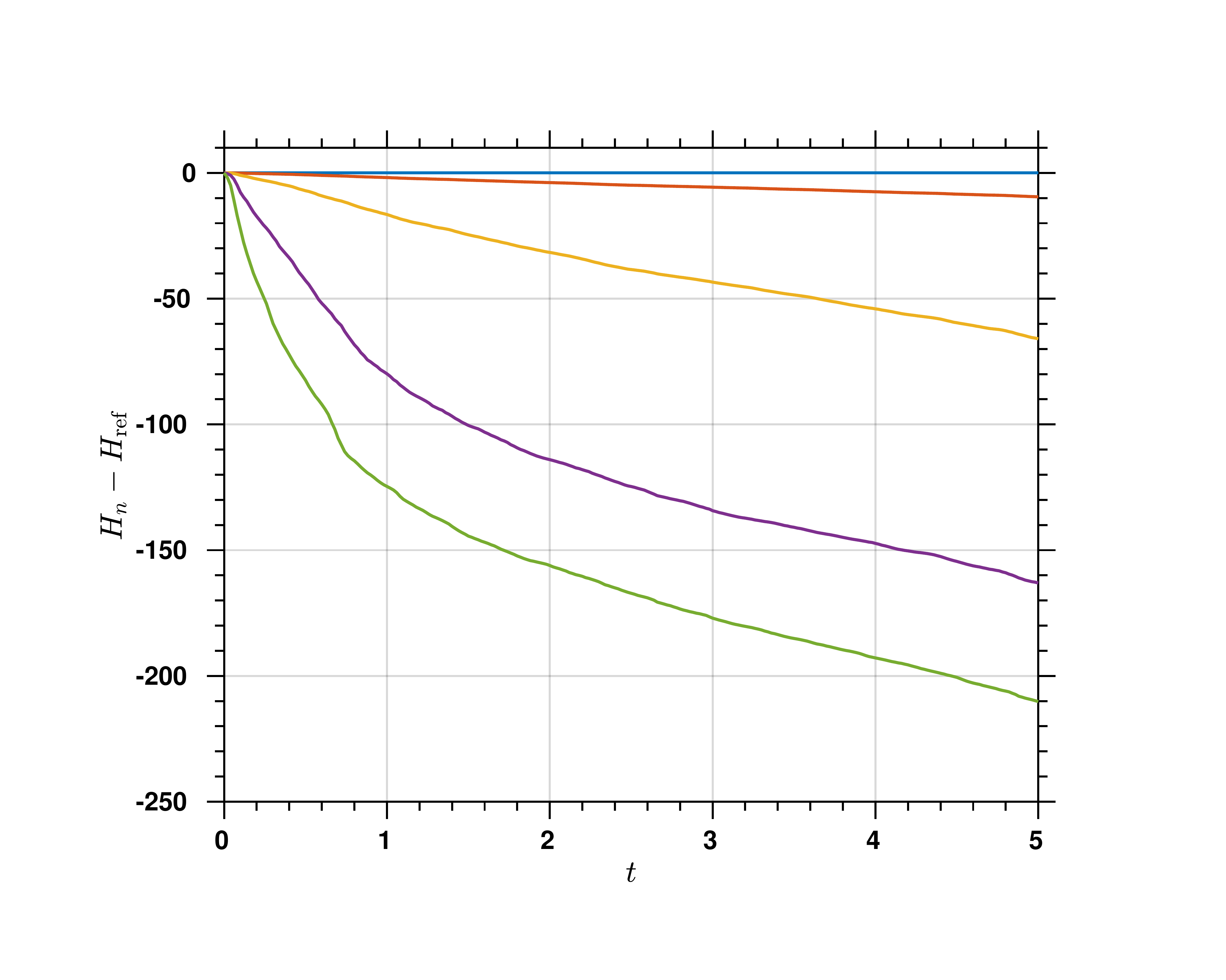} \\
(a) & (b) \\
\includegraphics[angle=0, trim=90 85 120 110, clip=true, scale = 0.22]{./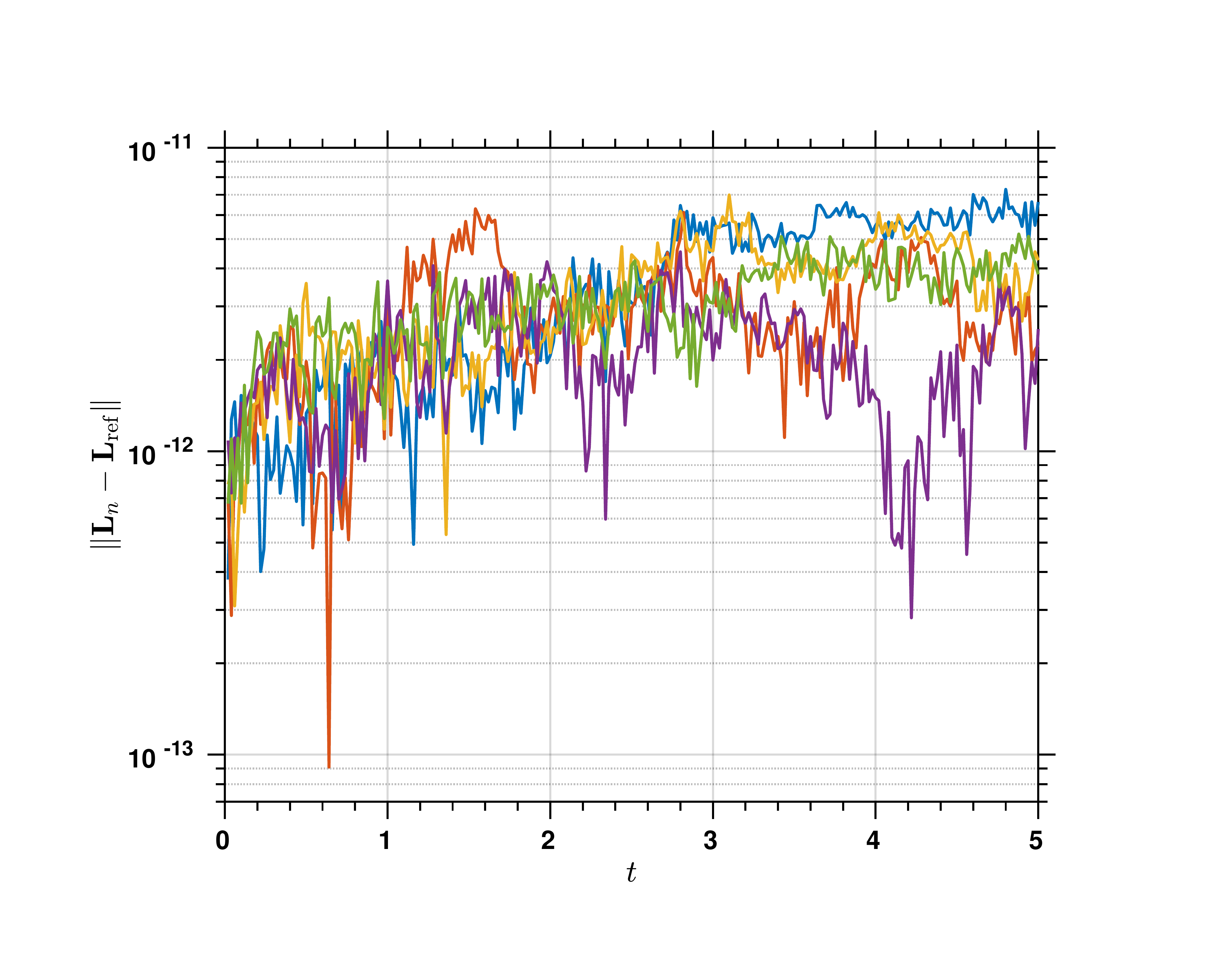} &
\includegraphics[angle=0, trim=90 85 120 110, clip=true, scale = 0.22]{./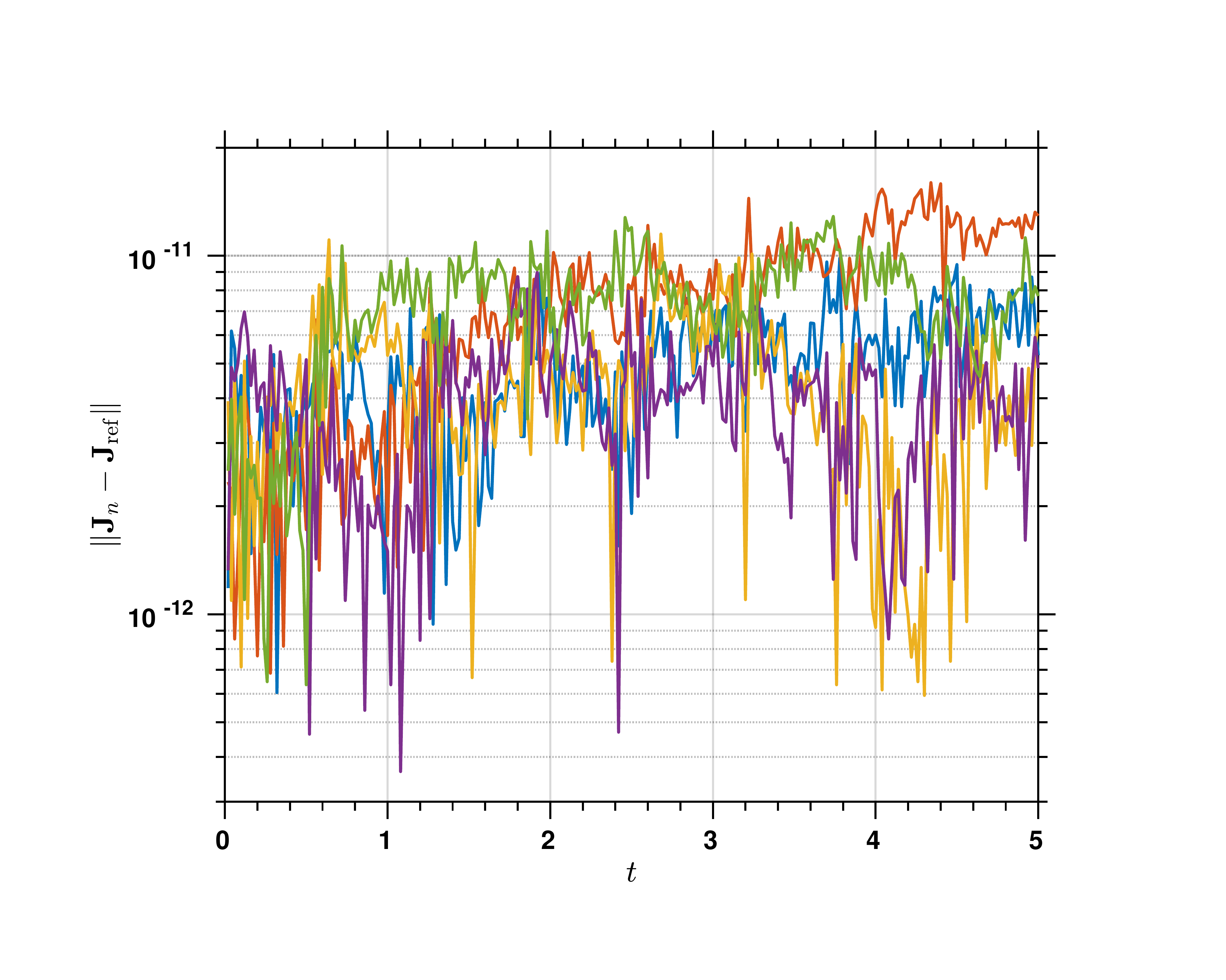} \\
(c) & (d) \\
\multicolumn{2}{c}{ \includegraphics[angle=0, trim=0 165 0 750, clip=true, scale = 0.35]{./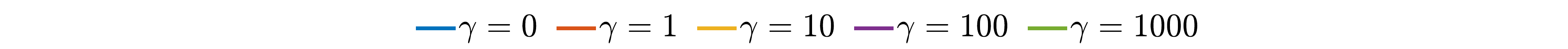} }
\end{tabular}
\caption{The effect of the grad-div stabilization parameter $\gamma$ for the twisting column: (a) $\| \nabla \cdot \bm{v} \|_{\mathcal{L}_2}$, (b) the absolute errors of the total Hamiltonian, (c) the absolute errors of the total linear momentum, and (d) the absolute errors of the angular momentum over time.}
\label{fig:column_graddiv}
\end{center}
\end{figure}

Snapshots of the von Mises stress on the deformed shapes at different time instances are depicted in Figure \ref{fig:column_deformation}. We observe almost indistinguishable stress results from the two different meshes. Considering that the coarse mesh consists of only $99$ elements with $\mathsf p = 1$, the superior stress resolving capability of the element technology is again confirmed. In Figure \ref{fig:column_energy_momenta}, the total Hamiltonian and three components of the total angular momentum are plotted over time; in Figure \ref{fig:column_energy_momenta_relative_errors}, the impact of the tolerance $\mathrm{tol}_R$ on the solution quality is investigated. Under the default setting (i.e., $\mathrm{tol}_{\mathrm{R}} = 10^{-10}$), the relative errors of the Hamiltonian and the angular momentum are of the orders of $10^{-13}$ and $10^{-15}$, respectively. When setting $\mathrm{tol}_{\mathrm{R}} = 10^{-8}$ and $\mathrm{tol}_{\mathrm{R}} = 10^{-6}$, the magnitudes of the relative errors grows by several order of magnitude. This indicates the discrete conservation properties from the designed algorithm is contingent upon the accuracy of the Newton-Raphson solution procedure. If the nonlinear solver cannot solve the algebraic equations accurately, the discrete conservation properties will get polluted. Based on this fact, we adopt this benchmark to examine the elasticity tensor for the Ogden model. As was mentioned in Section \ref{sec:hyperelasticity}, there exist a few missing terms in the existing formula. We refer to that formula and the formula \eqref{eq:CC-rate-form}-\eqref{eq:CC-spectral-form} as the \textit{conventional} elasticity tensor and the \textit{corrected} elasticity tensor, respectively. Although the missing term does not have a significant impact on the convergence rate for most of the time, there are indeed scenarios when the conventional elasticity tensor demands more iterations (Figure \ref{fig:convergence_NT} (b)) or fail to satisfy the convergence criteria within the maximum number of iterations (Figure \ref{fig:convergence_NT} (a)). This signifies the importance in maintaining the consistency of the elasticity tensor in the Newton-Raphson solution procedure when one performs structure-preserving simulations.

The effects of the grad-div stabilization are displayed in Figure \ref{fig:column_graddiv}. The parameter $\gamma$ varies from 0 to $10^3$. In Figure \ref{fig:column_graddiv} (a), the $\mathcal{L}_2$-norm of $\nabla \cdot \bm{v}$ decreases with the increase of the parameter $\gamma$, and when $\gamma$ gets larger than 100, the reduction of $\mathcal{L}_2$-norm of $\nabla \cdot \bm{v}$ is more significant. Figure \ref{fig:column_graddiv} (b) displays the dissipation effect of the grad-div stabilization term over time. In Figure \ref{fig:column_graddiv} (c) and (d), it can be observed that the total momenta are conserved regardless of the grad-div stabilization.

\subsection{Vibrating cantilever}
In this example, we investigate the chaotic dynamics of a three-dimensional vibrating cantilever under harmonic excitation \cite{Cao2006}. The problem setting is summarized in Table \ref{table:cantilever}. The displacement is fully clamped on the surface $X_3=0$, and we apply a harmonic load on the boundary surface $X_3=0.3$. The rest surfaces are traction-free. The problem is discretized by a mesh of $2 \times 2 \times 21$ elements with $\mathsf{p} = 1$. Simulations are integrated up to $T=250$ with $\Delta t = 0.01$ using the structure-preserving scheme and the generalized-$\alpha$ scheme. In the generalized-$\alpha$ scheme, we consider three options for the spectral radius of the amplification matrix at the highest mode, that is, $\rho_{\infty} = 1$, $0.5$, and $0$.

\begin{table}[htbp]
  \centering 
  \begin{tabular}{ m{.45\textwidth}   m{.45\textwidth} }
    \hline
    \begin{minipage}{.45\textwidth}
      \includegraphics[width=1.0\linewidth, trim=80 490 120 150, clip]{./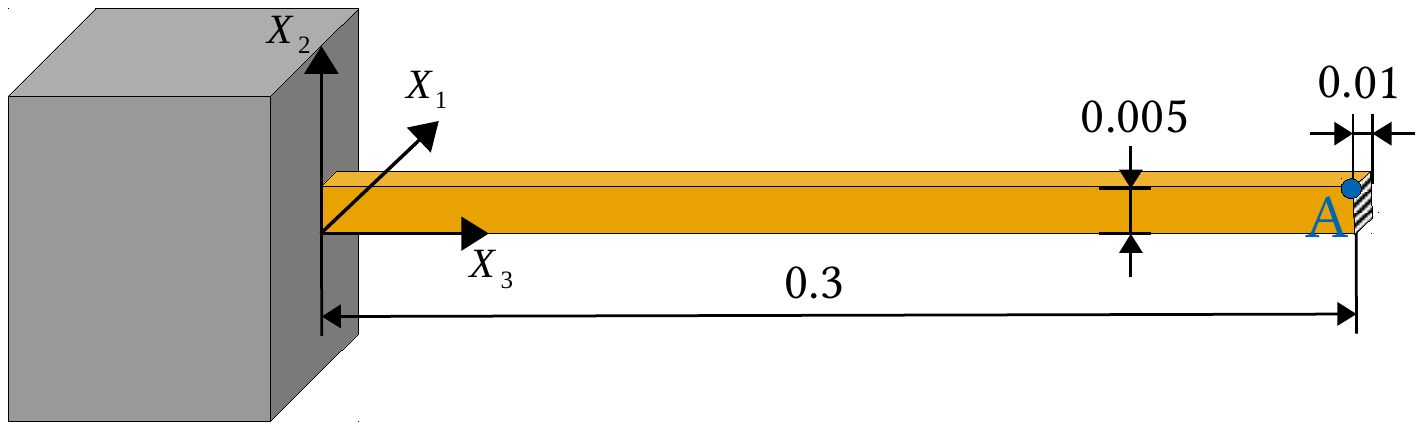}
    \end{minipage}
    &
    \begin{minipage}{.4\textwidth}
    \vspace{1mm}
      \begin{itemize}
        \item[] Material properties:
        \item[] $\rho_0 = 3.0\times 10^3$,
        \item[] $G_{\mathrm{ich}}( \tilde{\bm{C}} ) = \frac{\mu}{2}(\mathrm{tr}(\tilde{\bm{C}}) -3)$,
        \item[] $\mu = 6.93\times 10^7$.
         \item[] The traction on the surface $X_3=0.3$:
         \item[] $\bm{H} = [200 \times \mathrm{cos}(8t), ~100 \times \mathrm{sin}(8t), ~0]$.                
      \end{itemize}
      \vspace{1mm}
    \end{minipage}   
    \\
    \hline
  \end{tabular}
  \caption{The three-dimensional vibrating cantilever: problem definition. The position of the A point is $(0.0, 0.005, 0.3)$. A harmonic `dead' load acts on the surface $X_3=0.3$ denoted by a shadow area.}
\label{table:cantilever}
\end{table}

\begin{figure}
\begin{center}
\begin{tabular}{ccc}
\includegraphics[angle=0, trim=90 180 120 120, clip=true, scale = 0.14]{./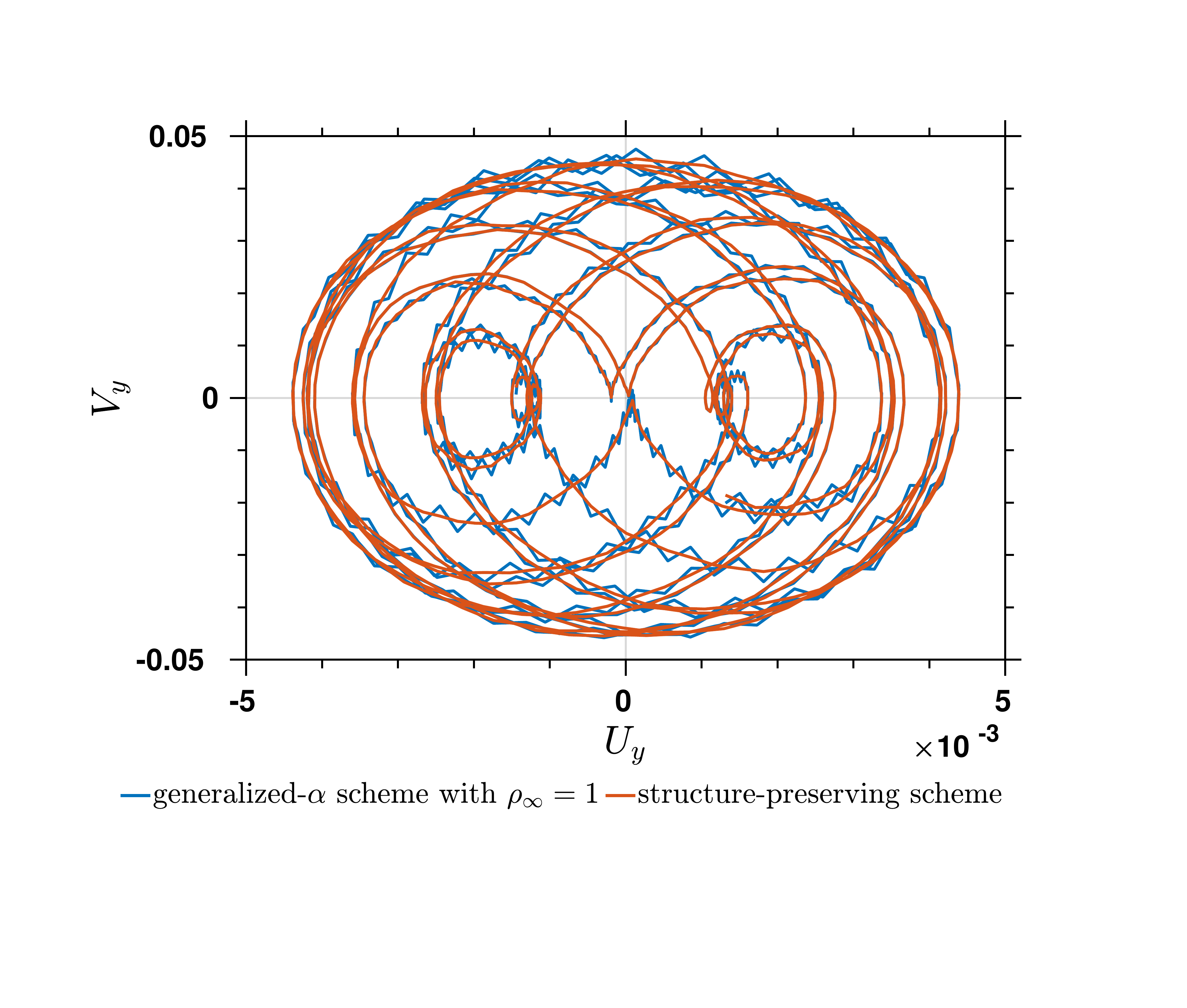} &
\includegraphics[angle=0, trim=90 180 120 120, clip=true, scale = 0.14]{./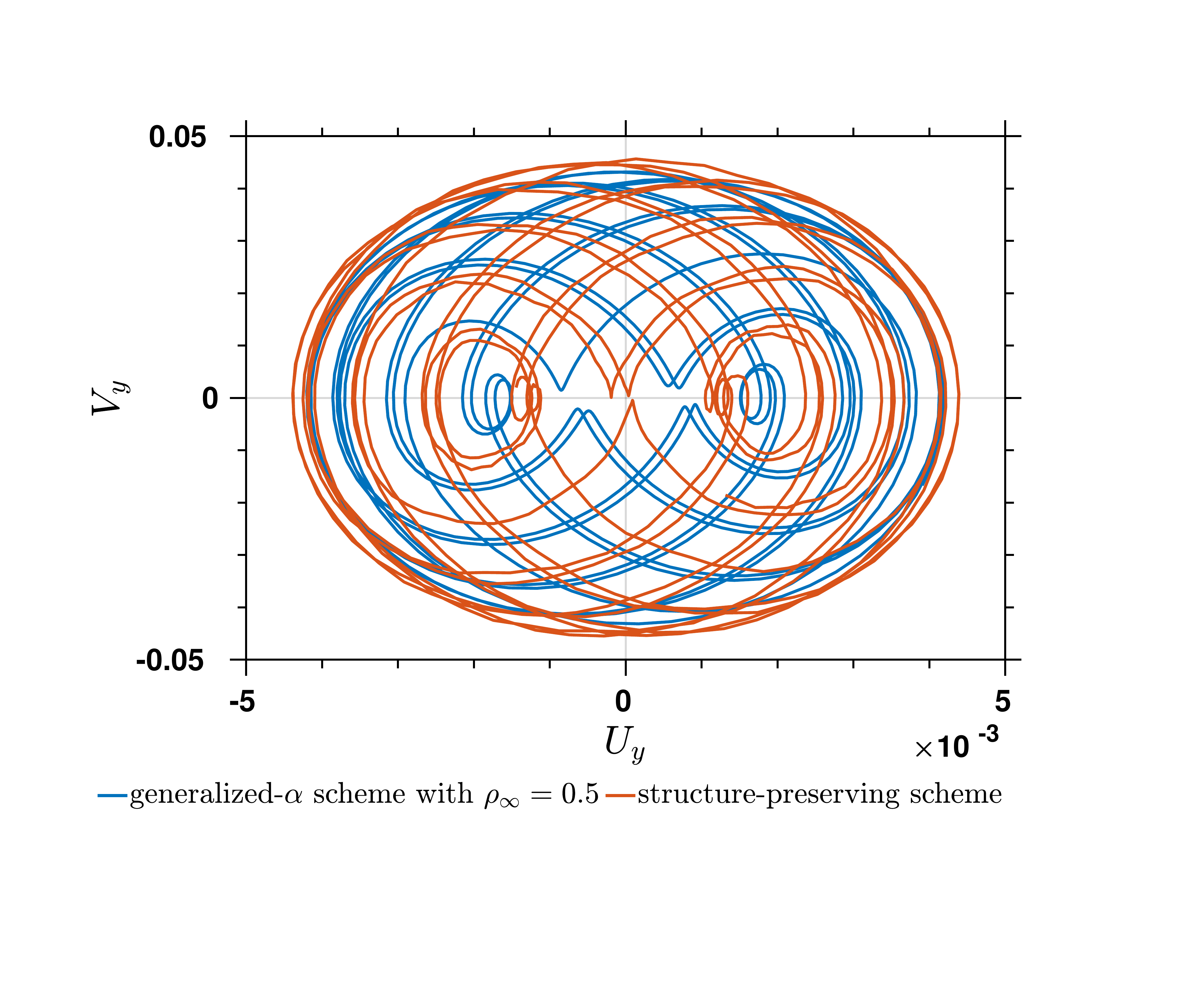} & 
\includegraphics[angle=0, trim=90 180 120 120, clip=true, scale = 0.14]{./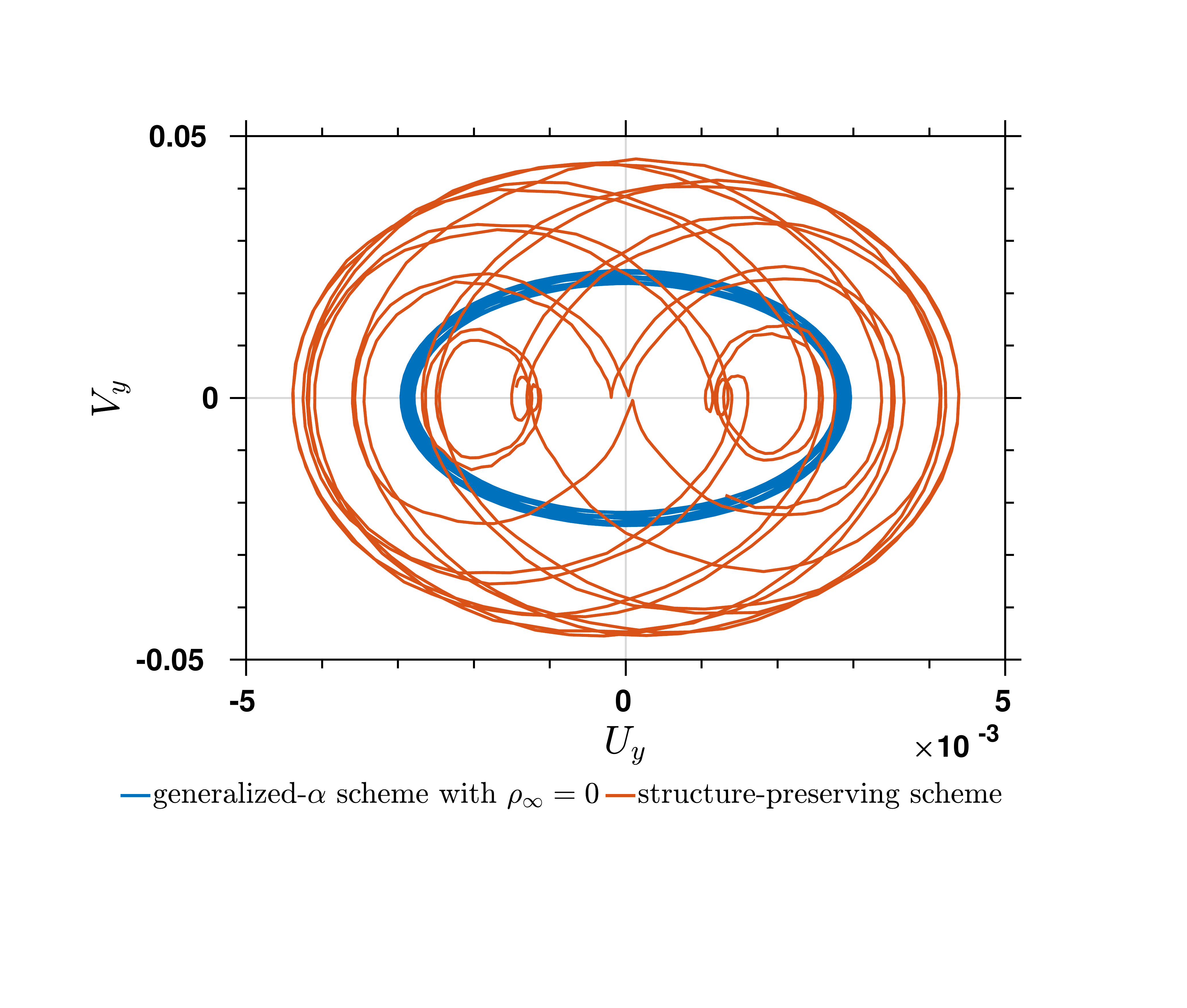} \\
(a) & (b) & (c) \\
\end{tabular}
\caption{The trajectories of point A in the displacement-velocity phase space obtained from different time schemes with $\Delta t = 0.01$.}
\label{fig:cantilever_phase_space}
\end{center}
\end{figure}

\begin{figure}
\begin{center}
\begin{tabular}{c}
\includegraphics[angle=0, trim=80 90 120 120, clip=true, scale = 0.35]{./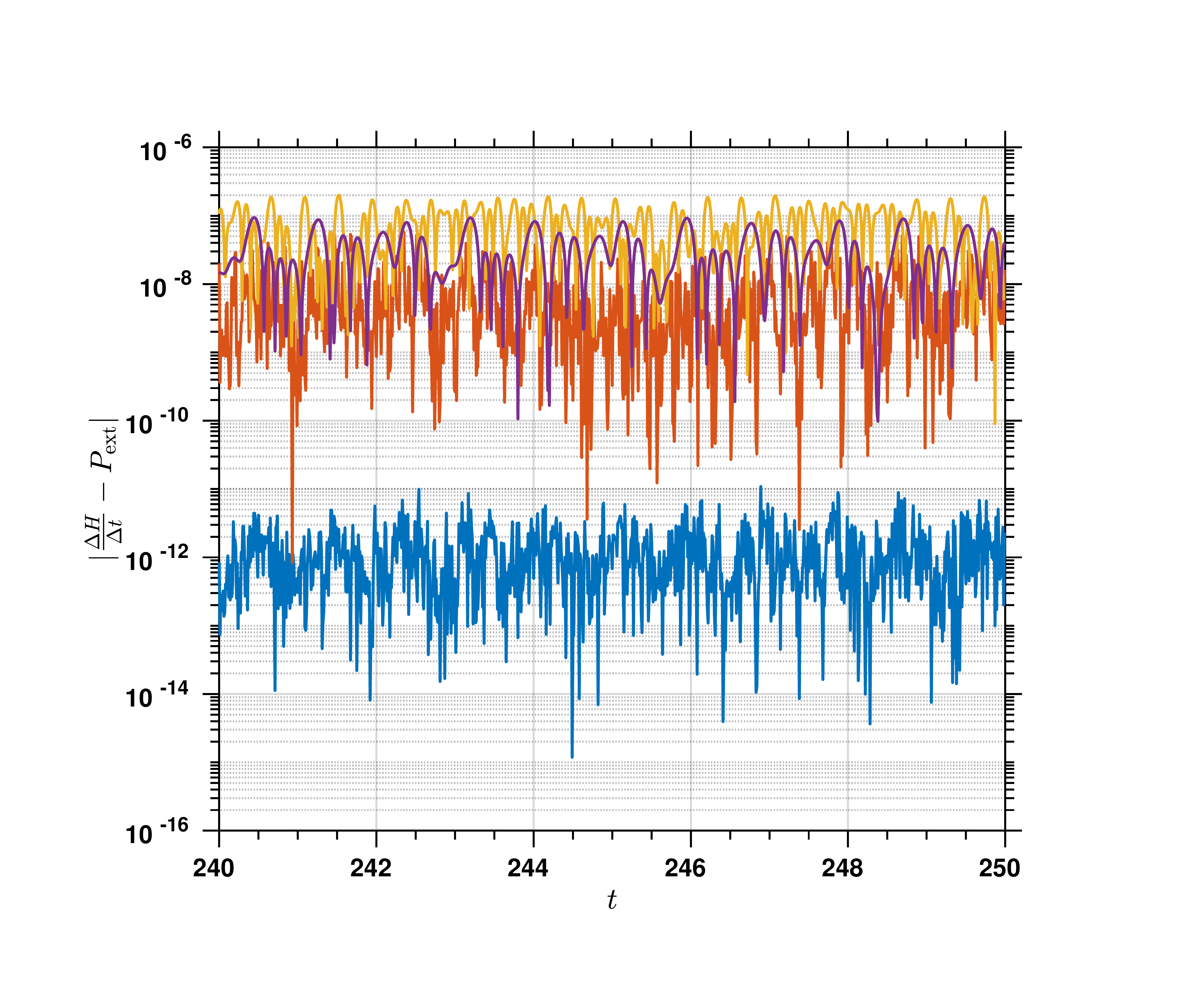} \\
\multicolumn{1}{c}{ \includegraphics[angle=0, trim=80 160 120 765, clip=true, scale = 0.30]{./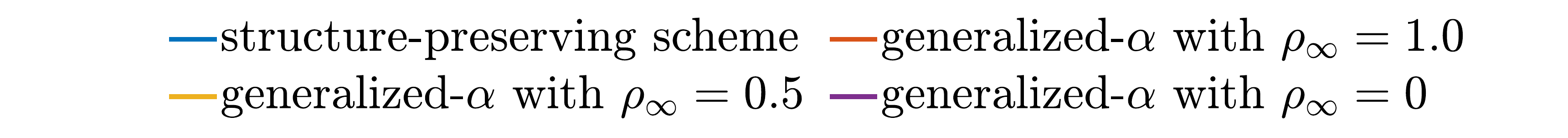} }
\end{tabular}
\caption{Residual errors of the power expenditure relation \eqref{eq:em-prop-1} by using the proposed structure-preserving scheme and the generalized-$\alpha$ schemes, where $P_{\mathrm{ext}}$ denotes the power of the external loads. The time step size is $\Delta t = 0.01$.}
\label{fig:cantilever_errors_power}
\end{center}
\end{figure}

\begin{figure}
\begin{center}
\begin{tabular}{cc}
\includegraphics[angle=0, trim=90 85 120 110, clip=true, scale = 0.22]{./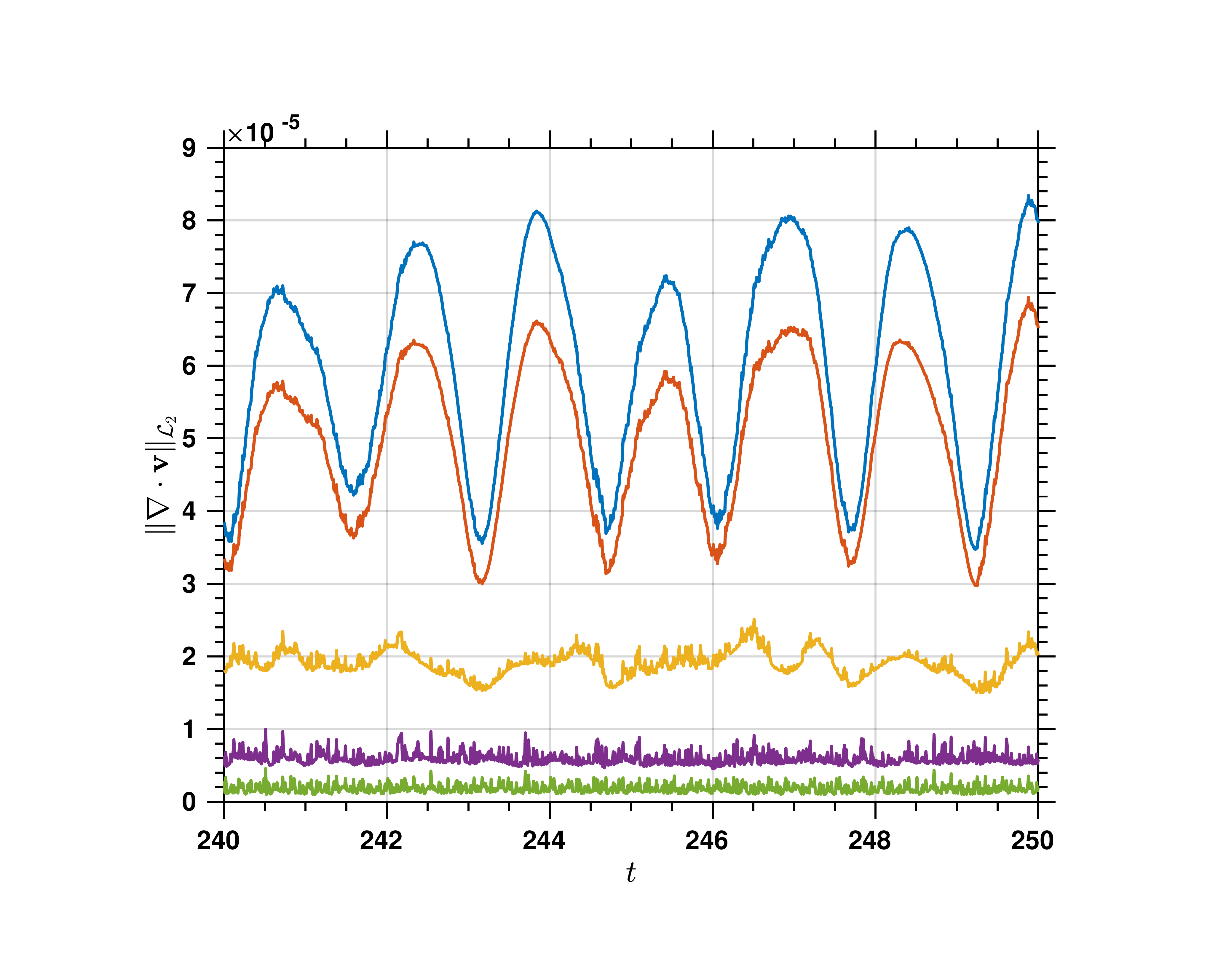} &
\includegraphics[angle=0, trim=90 85 120 110, clip=true, scale = 0.22]{./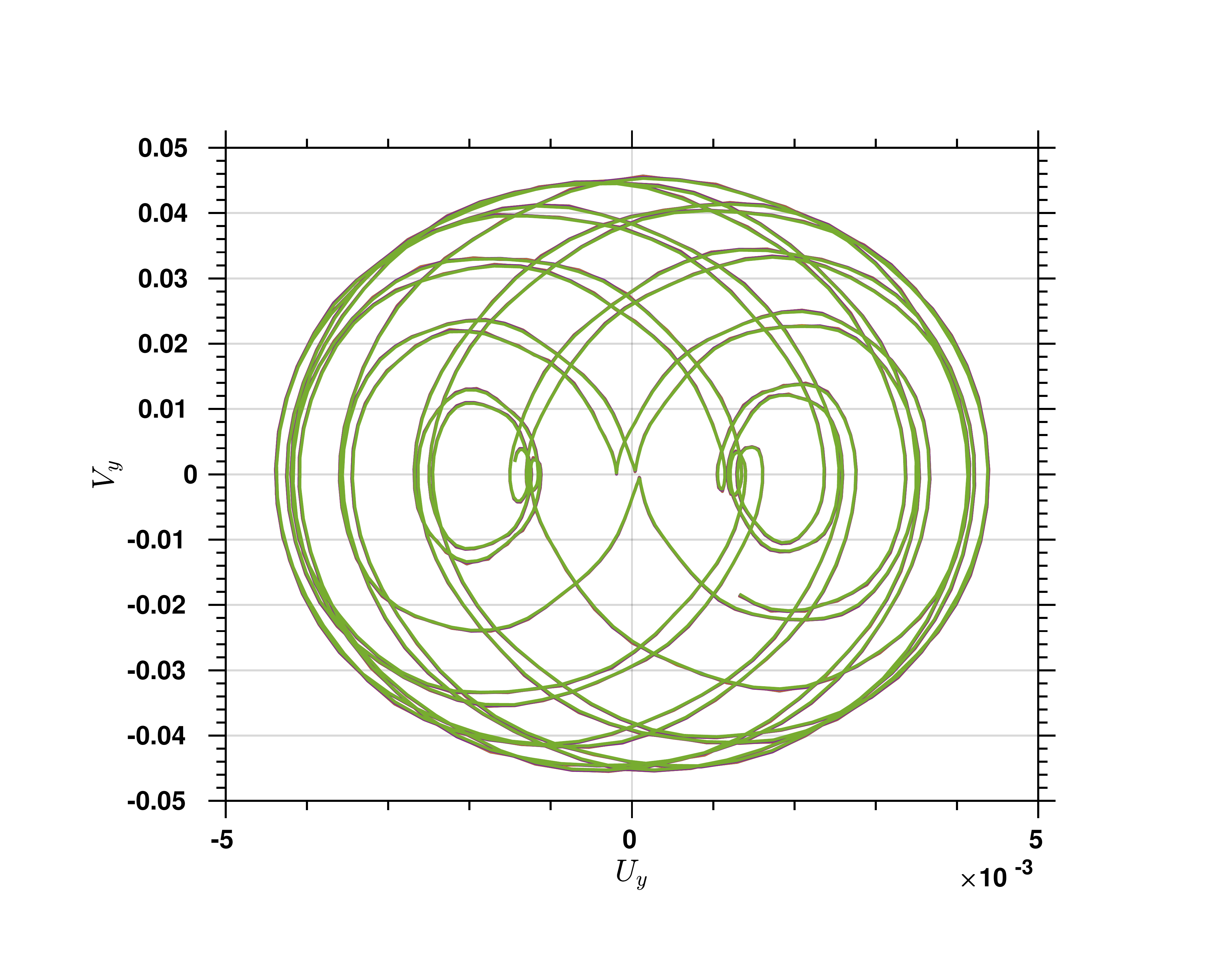} \\
(a) & (b) \\
\multicolumn{2}{c}{ \includegraphics[angle=0, trim=0 165 0 750, clip=true, scale = 0.35]{./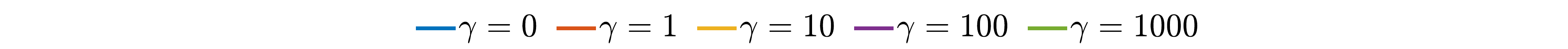} }
\end{tabular}
\caption{The effect of the grad-div stabilization parameter  $\gamma$ for the vibrating cantilever: (a) the time histories of $\| \nabla \cdot \bm{v} \|_{\mathcal{L}_2}$, (b) the trajectories of point A in the displacement-velocity phase space.}
\label{fig:cnatilever_graddiv}
\end{center}
\end{figure}

First, we record the trajectory of point A located at $(0, 0.005, 0.3)$ in the velocity-displacement phase space. Since we are concerned with the integrator performance for long-time integration, the last $1000$ steps are depicted in Figure \ref{fig:cantilever_phase_space}. In Figure \ref{fig:cantilever_phase_space} (a), we observe obvious oscillations of the trajectory given by the generalized-$\alpha$ scheme with $\rho_{\infty} = 1$ in comparison with the result of the structure-preserving scheme. In Figure \ref{fig:cantilever_phase_space} (b) and (c), the phase diagram of the generalized-$\alpha$ scheme is rather distinct from that of the structure-preserving scheme, likely due to the dissipative nature of the time integration scheme.

Due to the external harmonic surface load and the Dirichlet boundary condition, the Hamiltonian and the momenta are not strictly conserved. Still, we may assess the energy stability by calculating the residual of the power expenditure relation \eqref{eq:em-prop-1}. In Figure \ref{fig:cantilever_errors_power}, the magnitude of residual errors resulting from the proposed consistent scheme is $\mathcal{O}(10^{-11})$. In contrast, the magnitudes of residual errors given by the generalized-$\alpha$ method with different $\rho_{\infty}$ are several orders of magnitude larger than those of the structure-preserving integrator.

Finally, the performance of the grad-div stabilization on the $\mathcal{L}_2$-norm of $\nabla \cdot \bm{v}$ and the trajectories of the point A are exhibited in Figure \ref{fig:cnatilever_graddiv}. Figure \ref{fig:cnatilever_graddiv} (a) shows that the grad-div stabilization can effectively improve the discrete satisfaction of the divergence-free constraint. Figure \ref{fig:cnatilever_graddiv} (b) demonstrates that the trajectories calculated with different values of $\gamma$ are indistinguishable, meaning that, at least for this case considered here, the impact of the numerical dissipation $\mathcal D_m$ on the trajectories in the phase space is insignificant after long-time integration.

\section{Conclusion}
\label{sec:conclusion}
In this work, we first discussed the invariants for incompressible elastodynamics and designed a consistent scheme with respect to the energy and momenta. In particular, the Hamiltonian for fully incompressible materials is identified, in which the potential energy only involves the isochoric part of the energy. Considering that the previously existing studies are restricted to quasi-incompressible materials \cite{Gonzalez2000b,Hauret2006}, this newly identified Hamiltonian can be beneficial in analyzing dynamical systems. Invoking the discrete gradient technique, an algorithmic stress is constructed to maintain critical structures in discrete solutions. Abundant numerical examples justify its effectiveness and demonstrated its advantage over traditional integrators. Additionally, we considered an alternate option based on the scaled mid-point formula. Although it is theoretically appealing \cite{Orden2021}, our analysis reveals that it is non-robust in floating-point calculations. 

Second, the volume ratio is also an invariant in the fully incompressible regime. Preserving this quantity is of great importance in detecting physical instabilities \cite{Auricchio2013}. Inspired by recent advances in computational fluid dynamics, we introduced the grad-div stabilization technique to elastodynamics. This technique is attractive in that it can deliver divergence-free solutions asymptotically \cite{Scott1985a}. From the perspective of structure preservation, the grad-div stabilization term is momentum-preserving and energy-dissipative. Also, the dissipation will vanish as the stabilization parameter approaches infinity. This technique, combined with the smooth generalization of Taylor-Hood element, constitutes a robust, accurate, computationally convenient, and stable discretization technique for finite-strain analysis.

Third, our discussion focused on the material model formulated in principal stretches. Our derivation showed that there are missing terms in the elasticity tensor formula documented in the literature \cite{Holzapfel2000,Simo1991}. Numerical evidence suggests that the missing term can be critical in maintaining the discrete conservation. Also, we adopted a recently developed algorithm for the spectral decomposition. Our numerical example indicates that this algorithm is accurate enough for maintaining the discrete conservation properties, and it is thus superior to the widely-used algorithms based on Cardano's formula, which necessitates extra techniques when used in energy-momentum consistent schemes \cite{Mohr2008}.

There are several directions worth pursuing in the future. First, the volume-preserving algorithm \cite{Feng1995,McLachlan2002} can be applied with the current framework to further enhance the preservation of the volume ratio. This can lead to a new strategy for circumventing the issue in detecting the stability range for nonlinear elasticity \cite{Auricchio2013}. Second, the optimal choice of the stabilization parameter is worth investigating. Third, the newly identified Hamiltonian and the structure-preserving algorithm can be applied to study problems involving additional nonlinearity, such as inelasticity and contact problems.

\section*{Acknowledgements}
This work is supported by the National Natural Science Foundation of China [Grant Numbers 12172160, 12072143], Southern University of Science and Technology [Grant Number Y01326127], and the Guangdong-Hong Kong-Macao Joint Laboratory for Data-Driven Fluid Mechanics and Engineering Applications [Grant Number 2020B1212030001]. Computational resources are provided by the Center for Computational Science and Engineering at the Southern University of Science and Technology.

\appendix
\section{Elasticity tensor for the Ogden model}
\label{sec:appendix-A}
Here, we derive the formula \eqref{eq:CC-spectral-form} based on the spectral form of the isochoric stress \eqref{eq:ogden-S-spectral-form} and the formula for the elasticity tensor \eqref{eq:CC-rate-form}. We first introduce $S_1$ and $S_2$ as follows to facilitate the discussion,
\begin{align}
\label{eq:A1}
S_{\mathrm{ich~a}} = S_1(\lambda_a) S_2(\tilde{\lambda}_a),
\end{align}
in which
\begin{align*}
S_1(\lambda_a) = \frac{1}{\lambda_a^2} \quad \mbox{and} \quad S_2(\tilde{\lambda}_a) = \tilde{\lambda}_a \frac{\partial G_{\mathrm{ich}} }{\partial \tilde{\lambda}_a} -\frac{1}{3}\sum_{b=1}^{3} \tilde{\lambda}_b \frac{ \partial G_{\mathrm{ich}} }{ \partial \tilde{ \lambda}_b } = \sum_{p=1}^{N} \mu_p \left( \tilde{\lambda}_a^{\alpha_p} - \frac13 \sum_{c=1}^{3}\tilde{\lambda}_c^{\alpha_p} \right).
\end{align*}
Consequently, we have
\begin{align}
\label{eq:A2}
\frac{1}{\lambda_b}\frac{\partial S_{\mathrm{ich~a}}}{\partial \lambda_b} =  \frac{1}{\lambda_b} S_1 \frac{\partial S_2}{\partial \lambda_b} + \frac{1}{\lambda_b}\frac{\partial S_1}{\partial \lambda_b} S_2.
\end{align}
Straightforward calculations reveal that
\begin{align}
\frac{1}{\lambda_b} S_1 \frac{\partial S_2}{\partial \lambda_b} &= \frac{1}{\lambda_b} S_1 \sum_{c=1}^{3} \frac{\partial S_2}{\partial	\tilde{\lambda}_c} \frac{\partial \tilde{\lambda}_c}{\lambda_b} = \frac{1}{\lambda_b} S_1 \sum_{c=1}^{3}\frac{\partial S_2}{\partial \tilde{\lambda}_c} \left(J^{-1/3}\delta_{bc} -\frac{1}{3} J^{-1/3}\lambda^{-1}_{b} \lambda_{c} \right) \nonumber \displaybreak[2] \\
&= \frac{1}{\lambda_b^2} S_1 \left( \frac{\partial S_2}{\partial \tilde{\lambda}_b} \tilde{\lambda}_b -\frac{1}{3}\sum_{c=1}^{3}\frac{\partial S_2}{\partial \tilde{\lambda}_c} \tilde{\lambda}_c \right) \nonumber \displaybreak[2] \\
&= \frac{1}{\lambda_b^2} \frac{1}{\lambda_a^2} \left[ \left( \sum_{p=1}^{N}\mu_p \alpha_p \tilde{\lambda}_a^{\alpha_p -1} \tilde{\lambda}_b \delta_{ab} -\frac{1}{3}\sum_{p=1}^{N}\mu_p \alpha_p \tilde{\lambda}_b^{\alpha_p} \right) \right. \nonumber \displaybreak[2] \\
& \left. \qquad -\frac{1}{3}\sum_{c=1}^{3} \left(\mu_p \alpha_p \tilde{\lambda}_a^{\alpha_p-1}\tilde{\lambda}_c \delta_{ac} -\frac{1}{3}\sum_{p=1}^{N}\mu_p \alpha_p \tilde{\lambda}_{c}^{\alpha_p} \right)  \right] \nonumber \displaybreak[2] \\
&= \frac{1}{\lambda_b^2} \frac{1}{\lambda_a^2} \sum_{p=1}^{N}\mu_p \alpha_{p} \left(\tilde{\lambda}_a^{\alpha_p-1}\tilde{\lambda}_{b}\delta_{ab}-\frac{1}{3}\tilde{\lambda}_a^{\alpha_p} -\frac{1}{3}\tilde{\lambda}_b^{\alpha_p} + \frac{1}{9}\sum_{c=1}^{3}\tilde{\lambda}_c^{\alpha_p} \right) \nonumber \displaybreak[2] \\
&=
\label{eq:A25}
\begin{cases}
\frac{1}{\lambda_a^4} \sum\limits_{p=1}^{N}\mu_p \alpha_{p} \left( \frac{1}{3}\tilde{\lambda}_a^{\alpha_p} + \frac{1}{9}\sum\limits_{c=1}^{3}\tilde{\lambda}_c^{\alpha_p} \right) & a=b,  \displaybreak[2] \\
\frac{1}{\lambda_a^2} \frac{1}{\lambda_b^2} \sum\limits_{p=1}^{N}\mu_p \alpha_{p} \left(-\frac{1}{3}\tilde{\lambda}_a^{\alpha_p} -\frac{1}{3}\tilde{\lambda}_b^{\alpha_p} + \frac{1}{9}\sum\limits_{c=1}^{3}\tilde{\lambda}_c^{\alpha_p} \right) & a\neq b.
\end{cases}
\end{align}
The above is in fact the formula for the elasticity tensor documented in \cite[Page 264]{Holzapfel2000} or \cite[p.~285]{Simo1991}. However, there is a second term  in \eqref{eq:A2} which is missing in the above formula. It can be explicity written as
\begin{align}
\label{eq:A3}
\frac{1}{\lambda_b}\frac{\partial S_1}{\partial \lambda_b} S_2 = \frac{1}{\lambda_b}(-2 \lambda_a^{-3} \delta_{ab})S_2 =
\begin{cases}
-2\frac{1}{\lambda_a^4}\sum\limits_{p=1}^{N}\mu_p \left( \tilde{\lambda}_a^{\alpha_p} - \frac{1}{3}\sum\limits_{c=1}^{3}\tilde{\lambda}_c^{\alpha_p} \right) &a=b, \\
0 &a\neq b,
\end{cases}
\end{align}
Adding \eqref{eq:A3} to the conventional formula \eqref{eq:A25} leads to the final corrected one
\begin{align*}
\frac{1}{\lambda_b}\frac{\partial S_{\mathrm{ich~a}}}{\partial \lambda_b} =&
\begin{cases}
\lambda_a^{-4}\sum\limits_{p=1}^{N}\mu_p \alpha_p \left( (\frac{1}{3}-\frac{2}{\alpha_p}) \tilde{\lambda}_a^{\alpha_p} + (\frac{1}{9}+\frac{2}{3\alpha_p}) \sum\limits_{c=1}^3 \tilde{\lambda}_c^{\alpha_p}  \right) &  a=b, \\[1.6em]
\lambda_a^{-2}\lambda_b^{-2}\sum\limits_{p=1}^N \mu_p \alpha_p\left(-\frac{1}{3}
\tilde{\lambda}_b^{\alpha_p}-\frac{1}{3}\tilde{\lambda}_a^{\alpha_p} + \frac{1}{9}\sum\limits_{c=1}^3 \tilde{\lambda}_c^{\alpha_p}\right) & a\neq b.
\end{cases}
\end{align*}

\bibliographystyle{plain}
\bibliography{SOLID_MECH}

\end{document}